\documentclass[a4paper,11pt, oneside]{article}
\usepackage[table,xcdraw]{xcolor}
\usepackage[english]{babel}
\usepackage[latin1,utf8]{inputenx}
\usepackage{alltt}
\usepackage{amssymb}
\usepackage{latexsym}
\usepackage{graphicx}
\usepackage{mathrsfs}
\usepackage{epsfig}
\usepackage[colorlinks=true,
            linkcolor=black,
            citecolor=black]{hyperref} %Clinkcolor default = red
%\addto\extrasngerman{\def\figureautorefname{Abb.}} %\autoref Abb. statt Fig
\usepackage{adjustbox}
\usepackage[hypcap]{caption}
\usepackage{pdfpages}
\usepackage{amsthm}
\usepackage{amsmath}
\usepackage{float}
\usepackage{titlesec}
\usepackage{setspace}
\usepackage{subcaption}

%%% 
\usepackage{dsfont}
\usepackage[square,numbers]{natbib}
%%%%%%%%%%%%%%%%%%%%%%%%%%%%%%%%%%%%%
%Seitenlayout
%%%%%%%%%%%%%%%%%%%%%%%%%%%%%%%%%%%%%
%\usepackage{fancyhdr}
%\pagestyle{fancy}
%\makeatletter
%\def\maxwidth{ %
%  \ifdim\Gin@nat@width>\linewidth
%    \linewidth
%  \else
%    \Gin@nat@width
%  \fi
%}
%\makeatother
%\setlength{\textheight}{665pt}
%\setlength{\headheight}{13.6pt}
%\setlength{\textwidth}{440pt}
%\setlength{\marginparwidth} {10pt}
%\setlength{\hoffset}{-30pt}
%\setlength{\voffset}{-50pt}
%\fancyhfoffset[R]{0.5mm} 
%\setlength{\headsep}{1cm}
%\hyphenation{Kri-mi-na-li-täts-stat-is-tiken} %Silbentrennung
%%%%%%%%%%%%%%%%%%%%%%%%%%%%%%%%%%%%%
%Abkürzungen
%%%%%%%%%%%%%%%%%%%%%%%%%%%%%%%%%%%%%
\newcommand{\I}{{\mathbb I}}
\newcommand{\C}{\mathcal{C}} % Klasse von Copulas
 % Borelsche Sigma-Algebra
 % komplexe
\newcommand{\R}{\mathbb{R}} % reelle
 % rationale
%\newcommand{\Z}{\mathbb{Z}} % ganze
\newcommand{\N}{\mathbb{N}} % natuerliche
 % sub sigma algebra

 % m-star-product
 % integration wrt t
 % integration wrt x
\newcommand{\PP}{\mathbb{P}} % probability measure

 % phi
 % derivative

\DeclareMathOperator*{\argmin}{arg\,min}

%%%%%%%%%%%%%%%%%%%%%%%%%%%%%%%%%%%%%
% Zeichnen
%%%%%%%%%%%%%%%%%%%%%%%%%%%%%%%%%%%%%

\usepackage{tikz}
\usetikzlibrary{positioning}
\usepackage{tikz-cd}
\usepackage{tkz-euclide}
 % scale in math environment

%%%%%%%%%%%%%%%%%%%%%%%%%%%%%%%%%%%%%
%Literatur
%%%%%%%%%%%%%%%%%%%%%%%%%%%%%%%%%%%%%

%\usepackage[autostyle=true,german=quotes]{csquotes}
%\usepackage[backend=biber,style=numeric]{biblatex} %sorting=
%\usepackage[backend=biber,style=ieee, sorting= alphabetic, citestyle=numeric]{biblatex} %sorting= 
%\usepackage{biblatex}
%\bibliographystyle{chem-angew}
%\bibliographystyle{elsarticle-num}
%\addbibresource{bibliography.bib}

%\usepackage[numbers]{natbib}
%\bibliographystyle{chicago}
%\DeclareNameAlias{default}{family-given} %last-first

\renewenvironment{proof}{\begin{Proof}[\bfseries\proofname]}{\end{Proof}}
%ERSTE ÄNDERUNG
%\makeatletter
%renewenvironment{proof}{\begin{Proof}[\bfseries\proofname]}{\popQED\endtrivlist\@endpefalse\end{Proof}}
%\makeatother

% fuer Tabelle
\usepackage{array}
\newcolumntype{M}[1]{>{\centering\arraybackslash}m{#1}}

%%%%%%%%%%%%%%%%%%%%%%%%%%%%%%%%%%%%%
%Kopfzeile
%%%%%%%%%%%%%%%%%%%%%%%%%%%%%%%%%%%%%
%\pagenumbering{arabic}
%\rhead{\small{\today}}
%\chead{}
%\lhead{\small{Seminararbeit Daten und Identität}}
%\rhead{Marco Tschimpke}
%\rfoot{\small{\textcolor{gray}{marco.tschimpke@plus.ac.at}}}
%\lfoot{\small{\textcolor{gray}{Mathematics Universität Salzburg 
%}}}
%\renewcommand{\headrulewidth}{0.4pt}
%\renewcommand{\footrulewidth}{1pt}
%\setlength{\headwidth}{420pt}
%\markright{}
%\author{Marco Tschimpke}

\newtheorem{Theorem}{Theorem}[section]
 
\newtheorem{Lemma}[Theorem]{Lemma}	
\theoremstyle{definition}
\newtheorem{Example}[Theorem]{Example}

\theoremstyle{definition}
\newtheorem{Remark}[Theorem]{Remark} %\textbf{Remark}
\newtheorem{Proposition}[Theorem]{Proposition}

\topmargin-10mm 
\textheight240mm
\oddsidemargin0mm 
\evensidemargin0mm 
\textwidth165mm 
\allowdisplaybreaks

\numberwithin{equation}{section}

%%%%%%%%%%%%%%%%%%%%%%%%%%%%%%%%%%%%%%%%%%%%%%%%%%%%%%%%%%%%%%%%%%%%%%%%%%%%%%%%%%%%%%%%%%%%%%%%%%%%%%%%%%%%%%%%%%%%%%%%%%%%%%
%%%%%%%%%%%%%%%%%%%%%%%%%%%%%%%%%%%%%%%%%%%%%%%%%%%%%%%%%%%%%%%%%%%%%%%%%%%%%%%%%%%%%%%%%%%%%%%%%%%%%%%%%%%%%%%%%%%%%%%%%%%%%%
%%%%%%%%%%%%%%%%%%%%%%%%%%%%%%%%%%%%%%%%%%%%%%%%%%%%%%%%%%%%%%%%%%%%%%%%%%%%%%%%%%%%%%%%%%%%%%%%%%%%%%%%%%%%%%%%%%%%%%%%%%%%%%
\begin{document}

\title{Total positivity of copulas from a Markov kernel perspective}
\author{S. Fuchs\footnote{{Department for Artificial Intelligence \& Human Interfaces, University of Salzburg, Hellbrunnerstrasse 34, 5020 Salzburg, Austria. sebastian.fuchs@plus.ac.at}}, M. Tschimpke\footnote{{Department for Artificial Intelligence \& Human Interfaces, University of Salzburg, Hellbrunnerstrasse 34, 5020 Salzburg, Austria. marco.tschimpke@plus.ac.at}}}
\date{\today}

\maketitle

%\vspace{2cm}

\begin{abstract}
%% Text of abstract
\noindent
The underlying dependence structure between two random variables can be described in manifold ways. 
This includes the examination of certain dependence properties such as lower tail decreasingness (LTD), stochastic increasingness (SI) or total positivity of order $2$,
the latter usually considered for a copula (TP2) or (if existent) its density (d-TP2).
In the present paper we investigate total positivity of order $2$ for a copula's Markov kernel (MK-TP2 for short), 
a positive dependence property that is stronger than TP2 and SI, weaker than d-TP2 but, unlike d-TP2, is not restricted to absolutely continuous copulas, 
making it presumably the strongest dependence property defined for any copula (including those with a singular part such as Marshall-Olkin copulas).
We examine the MK-TP2 property for different copula families,
among them the class of Archimedean copulas and the class of extreme value copulas.
In particular we show that, within the class of Archimedean copulas, the dependence properties SI and MK-TP2 are equivalent.
\end{abstract}

\noindent\textit{Keywords: }Copula, Markov kernel, Dependence property 

%% main text
\section{Introduction}
Copulas capture the many facets of dependence relationships between (continuous) random variables, and various ways exist to quantify and describe them.
For the quantification of dependence, numerous measures are available that examine dependence from a wide range of perspectives.
This includes (but is not limited to)
the popular measures of concordance Kendall's tau and Spearman's rho 
(see, e.g., \cite{durante2016principles, nelsen2007introduction}),
the tail dependence coefficients and functions (see, e.g., \cite{joe2014dependence}), or
measures of directed or mutual complete dependence as presented in \cite{chatterjee2020, siburg2013, fgwt2020}.
\\
Apart from the option of assigning a single value to a dependence structure, an alternative way consists in checking whether a copula fulfills certain (positive) dependence properties such as
positive quadrant dependence (PQD),  
left tail decreasingness (LTD), 
right tail increasingness (RTI), 
stochastic increasingness (SI), 
or total positivity of order $2$, the latter usually considered for a copula (TP2) or (if existent) its density (d-TP2 for short), 
where the different notions of positive dependence are linked as given in Figure \ref{fig.Implications.Intro} below
(see, e.g., \cite{joe2014dependence, nelsen2007introduction}).
LTD, RTI and SI are defined coordinatewise while PQD, TP2 and d-TP2 are symmetric in the sense that permuting the coordinates has no effect on the respective dependence property.

Dependence properties may have direct impact on the interplay of different measures of dependence or the equivalence of different modes of convergence:
For instance, as mentioned in \cite{caperaa1993spearman, nefr2007}, Spearman's rho is larger than Kendall's tau for all those copulas that are LTD and RTI.
Moreover, as shown in \cite{siburg2021stochastic} pointwise convergence and weak conditional convergence (see, e.g., \cite{sfx2021weak,sfx2021vine}), i.e., weak convergence of almost all conditional distribution functions, are equivalent for stochastically increasing (SI) copulas.

The property \emph{total positivity of order $2$} has been extensively studied for copulas (TP2) and their densities (d-TP2); see, e.g., \cite{hurli2003, joe1997multivariate, joe2014dependence, lehmann1966}. 
The d-TP2 property is one of the strongest notions of positive dependence - stronger than TP2 and also stronger than stochastic increasingness (SI) (the latter two notions are not related to each other, in general).
Both notions, TP2 and SI, can be examined for any copula, however, the d-TP2 property is defined only for those copulas that are absolutely continuous, which automatically excludes many known copula families such as Fr{\'e}chet copulas or Marshall-Olkin copulas.
\\
In this paper we build the analysis on conditional distributions and investigate total positivity of order $2$ for a copula's Markov kernel (MK-TP2 for short), 
a positive dependence property that is stronger than TP2 and SI, weaker than d-TP2 (see Figure \ref{fig.Implications.Intro} below) but, 
unlike d-TP2, is not restricted to absolutely continuous copulas, making it presumably the strongest dependence property defined for any copula.
From a conditional distribution point of view the MK-TP2 property has been studied in 
\cite{caperaa1990concepts,guillem2000structure} showing its interrelation with the other above-mentioned dependence properties. 
%We here use Markov kernels to avoid problems concerning well-definedness, which inevitably occur when using conditional distributions or partial derivatives.
\begin{figure}[h]
\begin{center}
  \begin{tikzcd}[row sep=tiny]
            && \textrm{TP2} \arrow[dr, Rightarrow] &\\
            \textrm{d-TP2} \arrow[r, Rightarrow] & \textrm{MK-TP2} \arrow[ur, Rightarrow] \arrow[dr, Rightarrow] & & \textrm{LTD \& RTI} \arrow[r, Rightarrow] & \textrm{PQD} \\
            && \textrm{SI} \arrow[ur, Rightarrow] &\\
  \end{tikzcd}
\caption{Relations between the different notions of positive dependence.}
\label{fig.Implications.Intro}
\end{center}
\end{figure}
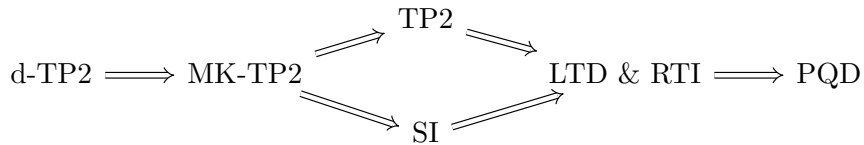

In the present paper, we investigate under which conditions the MK-TP2 property is fulfilled for certain copulas classes, among them the class of Archimedean copulas and the class of extreme value copulas (including Marshall-Olkin copulas). 
To the best of the authors' knowledge, no results are known in this regard so far.
We show that, for Archimedean copulas, the properties MK-TP2 and SI are equivalent, and can be characterized by the Archimedean generator.
In passing we slightly extend a well-known result from literature (see, e.g., \cite{caperaa1993spearman, muller2005}) by proving the equivalence of SI and log-convexity of the generator's derivative for arbitrary Archimedean copulas (i.e., the generator neither needs to be strict nor twice differentiable).
The equivalence of MK-TP2 and SI is consistent with the very restrictive, exchangeable structure of Archimedean copulas and the well-known equivalence of TP2 and LTD (see, e.g., \cite{nelsen2007introduction}).
\\
Another class of copulas generated by convex functions are extreme value copulas (EVC). 
It is well-known that EVCs are TP2 and SI, however, fail to be MK-TP2, in general.
We present sufficient and necessary conditions (in terms of the Pickands dependence function) for an EVC to be MK-TP2 and, 
for certain parametric subclasses (including Marshall-Olkin copulas, Gumbel copulas, and EVCs occuring in the symmetric mixed model by Tawn), we determine exactly those parameters for which their elements are MK-TP2.

The rest of this contribution is organized as follows: 
Section \ref{Sec.Prelim} gathers preliminaries and notation that will be used throughout the paper. 
In Section \ref{Sec.Dep.Prop.}, we formally define the positive dependence properties and examine the MK-TP2 property for different well-known copulas and copula families.
Section \ref{Sec.Arch.} is devoted to Archimedean copulas and comprises the equivalence of MK-TP2 and SI property in the Archimedean setting.
In Section \ref{Sect.EVC} we then present sufficient and necessary conditions for an extreme value copula to be MK-TP2.
Some auxiliary results and technical proofs can be found in Section \ref{Sect.App.}.

%%%%%%%%%%%%%%%%%%%%%%%%%%%%%%%%%%%%%%%%%%%%%%%%%%%%%%%%%%%%%%%%%%%%%%%%%%%%%%%%%%%%%%%%%%%%%%%%%%%%%%%%%%%%%%%%%%%%%%%%%%%%%%
%%%%%%%%%%%%%%%%%%%%%%%%%%%%%%%%%%%%%%%%%%%%%%%%%%%%%%%%%%%%%%%%%%%%%%%%%%%%%%%%%%%%%%%%%%%%%%%%%%%%%%%%%%%%%%%%%%%%%%%%%%%%%%
%%%%%%%%%%%%%%%%%%%%%%%%%%%%%%%%%%%%%%%%%%%%%%%%%%%%%%%%%%%%%%%%%%%%%%%%%%%%%%%%%%%%%%%%%%%%%%%%%%%%%%%%%%%%%%%%%%%%%%%%%%%%%%
\section{Notation and preliminaries} \label{Sec.Prelim}

Throughout this paper we will write $\I := [0,1]$ and let $\mathcal{C}$ denote the family of all bivariate copulas; 
$M$ will denote the comonotonicity copula, 
$\Pi$ the independence copula
and $W$ will denote the countermonotonicity copula.
\\  
For every $C \in \mathcal{C}$ the corresponding probability measure will be denoted by $\mu_C$, 
i.e. $\mu_C([0,u] \times [0,v]) = C(u,v)$ for all $(u,v) \in \I^2$;
for more background on copulas and copula measures we refer to \cite{durante2016principles,nelsen2007introduction}. 
For every metric space $(\Omega,\delta)$ the Borel $\sigma$-field on $\Omega$ will be denoted by $\mathcal{B}(\Omega)$.

In what follows Markov kernels will play a decisive role:
A \emph{Markov kernel} from $\R$ to $\mathcal{B}(\mathbb{R})$ is a mapping 
$K: \mathbb{R}\times\mathcal{B}(\mathbb{R}) \rightarrow \I$ such that for every fixed 
$F\in\mathcal{B}(\mathbb{R})$ the mapping 
$x\mapsto K(x,F)$ is measurable and for every fixed $x\in\mathbb{R}$ the mapping 
$F\mapsto K(x,F)$ is a probability measure. 
Given a real-valued random variable $Y$ and a real-valued random variable $X$ 
on a probability space $(\Omega, \mathcal{A}, P)$ 
we say that a Markov kernel $K$ is a \emph{regular conditional distribution} of $Y$ given $X$ if 
\begin{align*}
	K (X(\omega), F)
	= E ( \mathds{1}_F \circ Y \,|\, X ) (\omega) 
\end{align*}
holds $P$-almost surely for every $F\in \mathcal{B}(\mathbb{R})$. 
It is well-known that for each random vector $(X,Y)$ a regular conditional distribution 
$K(.,.)$ of $Y$ given $X$ always exists 
and is unique for $P^X$-a.e. $x\in\mathbb{R}$,
where $P^X$ denotes the push-forward of $P$ under $X$. 
If $(X,Y)$ has distribution function $H$ 
(in which case we will also write $(X,Y) \sim H$ and let $\mu_H$ denote the corresponding probability measure on $\mathcal{B}(\mathbb{R}^{2})$ we will let 
$K_H$ denote (a version of) the regular conditional distribution of $Y$ given $X$ and simply refer to it as \emph{Markov kernel of $H$}. 
If $C \in \mathcal{C}$ is a copula then we will consider the Markov kernel of $C$ (with respect to the first coordinate) automatically as mapping 
$K_C: \I \times \mathcal{B}(\I) \rightarrow \I$.
Defining the $u$-section of a set $G\in\mathcal{B}(\I^{2})$ as 
$G_u := \lbrace v \in \I \, : \, (u,v) \in G \rbrace$ 
the so-called disintegration theorem yields 
\begin{align}\label{eq:di}
	\mu_C(G)
	= \int_{\I} K_C(u,G_u) \; \mathrm{d} \lambda(u) 
\end{align}
so, in particular, we have 
\begin{align*}
  \mu_C(\I \times F)
	= \int_{\I} K_C(u,F) \; \mathrm{d} \lambda(u)  
	= \lambda(F)
\end{align*}
where $\lambda$ denotes the Lebesgue measure on $\mathcal{B}(\I)$.
For more background on conditional expectation and general disintegration we refer to \cite{Kallenberg, klenke2006wahrscheinlichkeitstheorie};
for more information on Markov kernels in the context of copulas we refer to 
\cite{durante2016principles, sfx2021weak, sfx2021vine}.
%%%%%%%%%%%%%%%%%%%%%%%%%%%%%%%%%%%%%%%%%%%%%%%%%%%%%%%%%%%%%%%%%%%%%%%%%%%%%%%%%%%%%%%%%%%%%%%%%%%%%%%%%%%%%%%%%%%%%%%%%%%%%%
%%%%%%%%%%%%%%%%%%%%%%%%%%%%%%%%%%%%%%%%%%%%%%%%%%%%%%%%%%%%%%%%%%%%%%%%%%%%%%%%%%%%%%%%%%%%%%%%%%%%%%%%%%%%%%%%%%%%%%%%%%%%%%
%%%%%%%%%%%%%%%%%%%%%%%%%%%%%%%%%%%%%%%%%%%%%%%%%%%%%%%%%%%%%%%%%%%%%%%%%%%%%%%%%%%%%%%%%%%%%%%%%%%%%%%%%%%%%%%%%%%%%%%%%%%%%%
\setcounter{section}{2}
\section{Dependence properties} \label{Sec.Dep.Prop.}

In the present section we first summarise some well-known standard notions of positive dependence viewed from a copula perspective;
their probabilistic interpretation is presented in Figure \ref{Fig.Prob.View} below.
We then formally define the MK-TP2 property and discuss this property for different (parametric) copula families.

A copula $C \in \mathcal{C}$ is said to be 
\begin{itemize} \itemsep=0mm
\item
\textit{positively quadrant dependent} (PQD) 
if $C(u,v) \geq \Pi(u,v)$ holds for all $(u,v) \in (0,1)^2$.

\item
\textit{left tail decreasing} (LTD) 
if, for any $v \in (0,1)$, the mapping $(0,1) \to \mathbb{R}$ given by $u \mapsto \frac{C(u,v)}{u}$ is non-increasing.

\item 
\textit{stochastically increasing} (SI) 
if, for (a version of) the Markov kernel $K_C$ and any $v \in (0,1)$, the mapping $u \mapsto K_C(u,[0,v])$ is non-increasing.
\end{itemize}
According to \cite{nelsen2007introduction} these three dependence properties are related as follows
$$ 
  SI 
	\Longrightarrow LTD 
	\Longrightarrow PQD
$$
Another notion of positive dependence that differs from the above mentioned dependence properties is \emph{total positivity of order 2} - a property applicable to various copula-related objects: 
the copula itself, its Markov kernel and its density, 
leading to three different but related positive dependence properties.
In general, a function $f: \Omega \to \R$ with $\Omega \subseteq \R^2$ is said to be \textit{totally positive of order 2} (TP2) on $\Omega$
if the inequality
\begin{align}\label{TP2GeneralEq}
    f(u_1,v_1)f(u_2,v_2) - f(u_1,v_2)f(u_2,v_1) \geq 0 
\end{align}
holds for all $u_1\leq u_2$ and all $v_1 \leq v_2$ such that $[u_1,u_2] \times [v_1,v_2] \subseteq \Omega$. 
%If the inequality in \ref{TP2GeneralEq} is reversed, we say that $f$ is \textit{reverse regular of order two} (RR2). 
The TP2 property has been extensively discussed for copulas and their densities 
(see, e.g., \cite{caperaa1993spearman, joe1997multivariate, joe2014dependence, nelsen2007introduction})
and, to a certain extent, also for their partial derivatives respectively their conditional distribution functions 
(see, e.g., \cite{caperaa1990concepts,guillem2000structure}).
%To avoid problems concerning well-definedness, which inevitably occur when using partial derivatives, we here use Markov kernels: 
A copula $C \in \C$ is said to be
\begin{itemize} \itemsep=0mm
\item 
\textit{TP2} if the copula itself is TP2, i.e., the inequality
\begin{equation} \label{CTP2.Def}
  C(u_1,v_1) \, C(u_2,v_2) - C(u_1,v_2) \, C(u_2,v_1) \geq 0
\end{equation} 
holds for all $0 < u_1 \leq u_2 < 1$ and all $0 < v_1 \leq v_2 < 1$; 
see Lemma \ref{DP.TP2.Charakterisierung} for a characterization in terms of the copula's Markov kernel.

\item 
\textit{MK-TP2} (short for Markov kernel TP2) if (a version of) its Markov kernel is TP2, i.e., the inequality
\begin{equation} \label{MKTP2.Def}
  K_C(u_1,[0,v_1]) \, K_C(u_2,[0,v_2]) - K_C(u_1,[0,v_2]) \, K_C(u_2,[0,v_1]) \geq 0
\end{equation} 
holds for all $0 < u_1 \leq u_2 < 1$ and all $0 < v_1 \leq v_2 < 1$.
\end{itemize}
If the copula has a density another dependence property can be formulated:
$C$ is said to be
\begin{itemize} 
\item 
\textit{d-TP2} (short for density TP2) if (a version of) its density $c$ is TP2, i.e., the inequality
\begin{equation} \label{dTP2.Def}
  c(u_1,v_1) \, c(u_2,v_2) - c(u_1,v_2) \, c(u_2,v_1) \geq 0
\end{equation} 
holds for all $0 < u_1 \leq u_2 < 1$ and all $0 < v_1 \leq v_2 < 1$. 
\end{itemize}
The property d-TP2 is also referred to as \textit{positively likelihood ratio dependence} (see, e.g., \cite{lehmann1966}). 
%Let $\C^2((0,1)^2)$ denote the set of all real-valued twice differentiable functions. If the function $f \in \C^2((0,1)^2)$ then equation \eqref{TP2GeneralEq} can be represented in the following way (see \cite{laradji2015sums}).
%\begin{Lemma}\label{TP2Alternative}
%Let $f$ be a positive function in $\C^2((0,1)^2)$. Then $f$ is TP2 on $(0,1)^2$ if and only if 
%\begin{align*}
%    f(x,y) \frac{\partial^2}{\partial x \partial y} f(x,y) \geq \frac{\partial}{\partial x} f(x,y) \frac{\partial}{\partial y} f(x,y)
%\end{align*}
%for all $x,y \in (0,1)$.
%\end{Lemma}
Altogether the different notions of positive dependence are linked as depicted in Figure \ref{fig.Implications} 
(compare \cite{guillem2000structure,nelsen2007introduction}). 
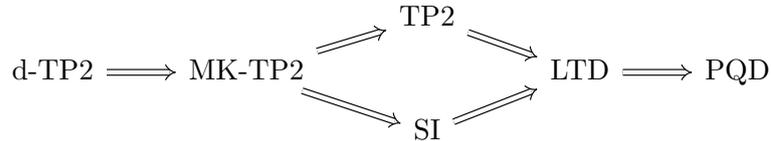
\begin{figure}[h!]
%\begin{equation}
\begin{center}
  \begin{tikzcd}[row sep=tiny]
            && \textrm{TP2} \arrow[dr, Rightarrow] &\\
            \textrm{d-TP2} \arrow[r, Rightarrow] & \textrm{MK-TP2} \arrow[ur, Rightarrow] \arrow[dr, Rightarrow] & & \textrm{LTD} \arrow[r, Rightarrow] & \textrm{PQD} \\
            && \textrm{SI} \arrow[ur, Rightarrow] &\\
  \end{tikzcd}
%\end{equation}
\caption{Relations between the different notions of positive dependence.}
\label{fig.Implications}
\end{center}
\end{figure}
Notice that the MK-TP2 property, unlike d-TP2, can be examined for any copula, making it presumably the strongest dependence property defined for any copula.

\begin{Remark}{} \label{MKTP2.Def.2}
By definition, 
a copula $C$ that is SI satisfies
$K_C(u_1,[0,v]) \geq K_C(u_2,[0,v])$ 
for all $0 < u_1 \leq u_2 < 1$ and all $v \in (0,1)$. 
Thus, the inequality
\begin{eqnarray*}
  K_C(u_2,[0,v_1])
	& \leq & \min\{K_C(u_1,[0,v_1]),K_C(u_2,[0,v_2])\} \notag
	\\ 
	& \leq & \max\{K_C(u_1,[0,v_1]),K_C(u_2,[0,v_2])\} \notag
	\\
	& \leq & K_C(u_1,[0,v_2]) 
\end{eqnarray*}
holds for all $0 < u_1 \leq u_2 < 1$ and all $0 < v_1 \leq v_2 < 1$.
Therefore, to prove the MK-TP2 property for such copulas Equation (\ref{MKTP2.Def}) only needs to be shown for those rectangles
$0 < u_1 \leq u_2 < 1$ and $0 < v_1 \leq v_2 < 1$ for which $K_C(u_2,[0,v_1]) > 0$.
\end{Remark}{}

We now provide a probabilistic interpretation of the above-mentioned positive dependence properties in terms of continuous random variables $X$ and $Y$ with connecting copula $C$:

\begin{figure}[h]
\begin{center}
    \resizebox{\columnwidth}{!}{%
    \begin{tabular}{|c|l|}
    \hline
    \textbf{Dependence property} & \textbf{Probabilistic interpretation} \\
    \hline &\\
    PQD 
		& $\PP(Y \leq y \vert X\leq x) \geq \PP(Y \leq y)$ \\[3ex]
    \hline &\\
    LTD 
		& $x \mapsto \PP(Y \leq y \vert X\leq x)$ is non-increasing for any $y$  \\[3ex]
    \hline &\\
    SI 
		& $x \mapsto \PP(Y \leq y \vert X = x)$ is non-increasing (a.s.) for any $y$  \\[3ex]
    \hline &\\
    TP2 
		& $x \mapsto \dfrac{\PP(Y \leq y \vert X\leq x)}{\PP(Y \leq y' \vert X\leq x)}$ is non-increasing for any $y\leq y'$  \\[3ex]
    \hline &\\
    MK-TP2 
		& $x \mapsto \dfrac{\PP(Y \leq y \vert X = x)}{\PP(Y \leq y' \vert X = x)}$ is non-increasing (a.s.) for any $y\leq y'$  \\[3ex]
    \hline &\\
    d-TP2 
		& $x \mapsto \dfrac{f_{Y\vert X}(y\vert x)}{f_{Y\vert X}(y'\vert x)}$ is non-increasing (a.s.) for any $y\leq y'$ \\[3ex]
    \hline
    \end{tabular}}
\end{center}
\caption{Probabilistic interpretation of the positive dependence properties}
\label{Fig.Prob.View}
\end{figure}

In the remainder of this section, we examine the MK-TP2 property for different well-known copulas and copula families.

\begin{Example} \leavevmode
\begin{enumerate} \itemsep=0mm
\item 
The independence copula $\Pi$ is d-TP2 and hence MK-TP2.

\item 
The comonotonicity copulas $M$ is MK-TP2; since $M$ is singular, it cannot fulfill the d-TP2 property.

\item
The countermonotonicity copula $W$ fulfills $W(0.5,0.5) < \Pi(0.5,0.5)$.
Thus it fails to be PQD and hence fails to satisfy any of the above-mentioned positive dependence properties.
\end{enumerate}
\end{Example}

The first two copula families under consideration - 
the \textit{Farlie-Gumbel-Morgenstern family of copulas} (FGM copula) 
and the \textit{Gaussian family of copulas} according to \cite{durante2016principles} -
are exceptional since all notions of positive dependence depicted in \eqref{fig.Implications} are equivalent:

\begin{Example} (FGM copula) \\
For $\theta \in [-1,1]$, the mapping $C_\theta: \I^2 \to \I$ given by
\begin{align*}
    C_\theta(u,v) := uv + \theta uv(1-u)(1-v)
\end{align*}
is a copula and called Farlie-Gumbel-Morgenstern (FGM) copula.  
For an FGM copula $C_\theta$ the following statements are equivalent:
\begin{enumerate} \itemsep=0mm
  \item[(a)] $C_\theta$ is PQD / LTD / SI / TP2 / MK-TP2 / d-TP2.
  \item[(b)] $\theta \geq 0$.
\end{enumerate} 
\end{Example}

\begin{Example} (Gaussian copula) \\
For $\rho \in (-1,0) \cup (0,1)$, the mapping $C_\rho: \I^2 \to \I$ given by
$$
    C_\rho(u,v) = \int\limits_{(-\infty,\phi^{-1}(u)] \times (-\infty,\phi^{-1}(v)]} \frac{1}{2\pi \sqrt{1-\rho^2}} \exp{\left(-\frac{s^2-2\rho s t + t^2}{2(1-\rho^2)} \right)} \; \mathrm{d} \lambda^2(s,t)
$$
is a copula and called Gaussian copula, where $\phi^{-1}$ denotes the inverse of the standard Gaussian distribution function. 
According to \cite{joe2014dependence}, for a Gaussian copula $C_\rho$ the following statements are equivalent:
\begin{enumerate} \itemsep=0mm
  \item[(a)] $C_\rho$ is PQD / LTD / SI / TP2 / MK-TP2 / d-TP2.
  \item[(b)] $\rho \in (0,1)$.
\end{enumerate}
\end{Example}

We conclude this section with a brief discussion of the 
\emph{Fr{\'e}chet family of copulas} and
the \emph{Marshall-Olkin family of copulas} according to \cite{durante2016principles} 
%and the class of \emph{truncation invariant copulas}
- two copula subclasses for which the above-mentioned dependence properties are not equivalent; we thereby provide at the same time an outlook on a result from Section \ref{Sect.EVC}.

\begin{Example} (Fr{\'e}chet copula) \\
For $\alpha, \beta \in \I$ with $\alpha+\beta \leq 1$,
the mapping $C_{\alpha,\beta}: \I^2 \to \I$ given by
$$
  C_{\alpha,\beta}(u,v) = \alpha M(u,v) + (1-\alpha-\beta)\Pi(u,v) + \beta W(u,v)
$$
is a copula and called Fr{\'e}chet copula.
The following statements can be easily verified:
%to prove 2. for $\alpha \in (0,1)$ consider $(u_1,u_2), (v_1,v_2) \in (0,1)^2$ with $0 < v_1 < u_1 < v_2 < u_2$:
\begin{enumerate} \itemsep=0mm
  \item $C_{\alpha,\beta}$ is PQD / LTD / SI / TP2 if and only if $\beta=0$.
	% Werte den Punkt (beta/2,1-beta/2) aus 
  \item $C_{\alpha,\beta}$ is MK-TP2 if and only if $\alpha \in \{0,1\}$ and $\beta=0$.
	
	\item $C_{\alpha,\beta}$ is d-TP2 if and only if $\alpha=1$ and $\beta=0$.
\end{enumerate}
\end{Example}

\begin{Example} (Marshall-Olkin copula) \label{Intro.Ex.MO} \\
For $\alpha, \beta \in \I$, the mapping $M_{\alpha, \beta}: \I^2 \to \I$ given by
$$
  M_{\alpha, \beta}(u,v) = \begin{cases}
                                u^{1-\alpha} v & u^{\alpha} \geq v^{\beta} \\
                                u v^{1-\beta}  & u^{\alpha} < v^{\beta}
                             \end{cases}
$$
is a copula and called Marshall-Olkin (MO) copula.
Since MO copulas are extreme value copulas (EVC) they are SI and TP2.
If $\min\{\alpha,\beta\} = 0$, then $M_{\alpha, \beta}= \Pi$ implying that in this case $M_{\alpha, \beta}$ is d-TP2.
If, on the other hand, $\min\{\alpha,\beta\} \neq 0$, 
$M_{\alpha, \beta}$ fails to be absolutely continuous (see, e.g., \cite{durante2016principles}) and hence fails to be d-TP2.
In this case, the strongest verifiable notion of positive dependence is MK-TP2:
According to Example \ref{Thm.MO}, for $\alpha, \beta \in (0,1]$, the following statements are equivalent:
\begin{enumerate} \itemsep=0mm
  \item[(a)] $M_{\alpha, \beta}$ is MK-TP2.
  \item[(b)] $\beta=1$.
\end{enumerate}
\end{Example}\pagebreak

In the next two sections we consider the family of Archimedean copulas and the family of extreme value copulas - 
both of which are copula families whose elements are generated by convex functions - 
and investigate under which conditions the corresponding copulas are MK-TP2.

%%%%%%%%%%%%%%%%%%%%%%%%%%%%%%%%%%%%%%%%%%%%%%%%%%%%%%%%%%%%%%%%%%%%%%%%%%%%%%%%%%%%%%%%%%%%%%%%%%%%%%%%%%%%%%%%%%%%%%%%%%%%%%
%%%%%%%%%%%%%%%%%%%%%%%%%%%%%%%%%%%%%%%%%%%%%%%%%%%%%%%%%%%%%%%%%%%%%%%%%%%%%%%%%%%%%%%%%%%%%%%%%%%%%%%%%%%%%%%%%%%%%%%%%%%%%%
%%%%%%%%%%%%%%%%%%%%%%%%%%%%%%%%%%%%%%%%%%%%%%%%%%%%%%%%%%%%%%%%%%%%%%%%%%%%%%%%%%%%%%%%%%%%%%%%%%%%%%%%%%%%%%%%%%%%%%%%%%%%%%
\section{Archimedean copulas} \label{Sec.Arch.}

In this section we investigate under which conditions on the Archimedean generator (or its pseudo-inverse) the corresponding Archimedean copula is MK-TP2.
It is well-known that an Archimedean copula $C$ is TP2 if and only if it is LTD (see, e.g., \cite{nelsen2007introduction}).
In what follows, we add a second characterisation to Figure \ref{fig.Implications} by showing that an Archimedean copula $C$ is MK-TP2 if and only if it is SI.

Recall that a generator of a bivariate Archimedean copula 
is a convex, strictly decreasing function $\varphi: \I \to [0,\infty]$ with $\varphi(1)=0$ (see, e.g., \cite{durante2016principles,nelsen2007introduction}).  
According to \cite{sfx2021weak} we may, w.l.o.g., assume that all generators are right-continuous at $0$.
Every generator $\varphi$ induces a symmetric copula $C$ via
\begin{align*}
    C(u,v) = \psi (\varphi(u) + \varphi(v))
\end{align*}
for all $(u,v) \in \I^2$ where 
$\psi: [0,\infty] \to \I$ denotes the pseudo-inverse of $\varphi$ defined by
$$
  \psi(x)
	:= \begin{cases}
     \varphi^{-1}(x) & \text{if } x\in[0,\varphi(0)) \\
     0              & \text{if } x\geq\varphi(0).
     \end{cases}
$$
%where $\varphi(t+) := \lim_{s \to t^+} \varphi(s)$ denotes the right-side limit.
The pseudo-inverse $\psi$ is convex, non-increasing and strictly decreasing on $[0,\varphi(0))$. Furthermore, it fulfills $\psi(0)=1$.
Since most of the subsequent results are formulated in terms of $\psi$, we call $\psi$ \emph{co-generator}.
\\
If $\varphi(0) = \infty$ the induced copula $C$ is called strict, 
otherwise it is referred to as non-strict.
Since for every generator $\varphi$ and every constant $a \in (0,\infty)$ the generator $a\varphi$ generates the same copula, 
w.l.o.g., we may assume that generators are normalized, i.e. $\varphi\big(\tfrac{1}{2}\big) = 1$, 
which allows for a one-to-one correspondence between generators and Archimedean copulas.
\\
For every generator $\varphi$ we denote by $D^+\varphi(u)$ ($D^-\varphi(u)$) the right-hand (left-hand) derivative of $\varphi$ at $u \in (0,1)$,
and for every co-generator $\psi$ we denote by $D^+\psi(x)$ ($D^-\psi(x)$) the right-hand (left-hand) derivative of $\psi$ at $x \in (0,\infty)$.
Convexity of $\varphi$ (respectively $\psi$) implies that right-hand and left-hand derivative coincide almost surely, i.e., $\varphi$ (respectively $\psi$) is differentiable outside a countable subset of $(0,1)$ (respectively $(0,\infty)$).
Setting $D^+\varphi(0)=-\infty$ ($D^-\psi(\infty)=0$) in case of strict $C$ allows to view 
$D^+\varphi$ ($D^-\psi$) as non-decreasing and right-continuous (left-continuous) function on $[0,1)$ ($(0,\infty]$).

To investigate under which conditions an Archimedean copula $C$ is MK-TP2 we need to establish (a version of) the Markov kernel of $C$.
To this end, we first define the $0$-level function $f^0: (0,1) \to \I$ by $f^0(u) := \psi(\varphi(0) - \varphi(u))$.

\begin{Theorem}\label{AC.MK}
Let $C$ be an Archimedean copula with generator $\varphi$ and co-generator $\psi$. 
If $C$ is strict, then 
\begin{equation*} %\label{AC.MK.Psi}
  K_C(u,[0,v]) = \begin{cases}
                 1 & \text{if } u \in \lbrace 0,1 \rbrace
								 \\
                 \frac{D^-\psi(\varphi(u) + \varphi(v))}{D^-\psi(\varphi(u))} & \text{if } u \in (0,1)
                 \end{cases}
\end{equation*}
is (a version of) its Markov kernel,
and, if $C$ is non-strict, then
\begin{equation*} %\label{AC.MK.Psi}
  K_C(u,[0,v]) = \begin{cases}
                 1 & \text{if } u \in \lbrace 0,1 \rbrace
								 \\
                 \frac{D^-\psi(\varphi(u) + \varphi(v))}{D^-\psi(\varphi(u))} & \text{if } u \in (0,1) \text{ and } v \geq f^0(u)
								 \\
                 0 & \text{if } u \in (0,1) \text{ and } v < f^0(u)
                 \end{cases}
\end{equation*}
is (a version of) its Markov kernel.
\end{Theorem}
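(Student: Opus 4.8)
The plan is to verify the proposed formula for $K_C$ by checking the defining property of a Markov kernel of a copula, namely that for every $v \in \I$ and every Borel set $A \subseteq \I$,
\begin{align*}
  \int_A K_C(u,[0,v]) \, \mathrm{d}\lambda(u) = \mu_C(A \times [0,v]).
\end{align*}
Since both sides are measures in $A$ that are absolutely continuous with respect to $\lambda$ (the left-hand side by construction, the right-hand side because the first marginal of $\mu_C$ is $\lambda$), it suffices to treat $A = [0,u_0]$ and show
\begin{align*}
  \int_0^{u_0} K_C(u,[0,v]) \, \mathrm{d}\lambda(u) = C(u_0,v) = \psi(\varphi(u_0) + \varphi(v))
\end{align*}
for all $u_0, v \in (0,1)$; the singleton boundary values $u \in \{0,1\}$ do not affect the integral. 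The measurability of $u \mapsto K_C(u,[0,v])$ is clear since $\varphi$, $\psi$ and $D^-\psi$ are monotone (hence Borel) functions, and $F \mapsto K_C(u,F)$ is a probability measure because $v \mapsto D^-\psi(\varphi(u)+\varphi(v))/D^-\psi(\varphi(u))$ is non-increasing in $\varphi(v)$, equals $1$ at $v=1$ (where $\varphi(v)=0$), and tends to the right value as $v \downarrow 0$; in the non-strict case the truncation at $f^0(u)$ is exactly where the numerator $D^-\psi(\varphi(u)+\varphi(v))$ would otherwise leave the region where $\psi$ is strictly decreasing.

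First I would treat the strict case. The key computation is to differentiate $u \mapsto \psi(\varphi(u) + \varphi(v))$. Since $\psi$ is convex and $\varphi$ is convex and strictly decreasing, the composition $u \mapsto \psi(\varphi(u)+\varphi(v))$ is absolutely continuous on compact subintervals of $(0,1)$ (monotone composed with monotone, with locally bounded one-sided derivatives away from the endpoints), so it equals the integral of its derivative. By the chain rule for one-sided derivatives of monotone convex functions — valid $\lambda$-a.e. — one gets
\begin{align*}
  \frac{\mathrm{d}}{\mathrm{d}u}\, \psi(\varphi(u)+\varphi(v)) = D^-\psi(\varphi(u)+\varphi(v)) \cdot D^-\varphi(u)
\end{align*}
for a.e.\ $u$. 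Applying this with $\varphi(v)$ replaced by $0$ gives $\frac{\mathrm{d}}{\mathrm{d}u}\, u = 1 = D^-\psi(\varphi(u)) \cdot D^-\varphi(u)$ a.e., so $D^-\varphi(u) = 1/D^-\psi(\varphi(u))$ at a.e.\ $u$ (note $D^-\varphi(u) \neq 0$ since $\varphi$ is strictly decreasing, and $D^-\psi(\varphi(u)) \neq 0$ since $\varphi(u) < \varphi(0) = \infty$ puts us strictly inside the domain where $\psi$ is strictly decreasing). Substituting,
\begin{align*}
  \int_0^{u_0} \frac{D^-\psi(\varphi(u)+\varphi(v))}{D^-\psi(\varphi(u))} \, \mathrm{d}\lambda(u)
  = \int_0^{u_0} D^-\psi(\varphi(u)+\varphi(v)) \, D^-\varphi(u) \, \mathrm{d}\lambda(u)
  = \psi(\varphi(u_0)+\varphi(v)) - \psi(\varphi(0^+)+\varphi(v)),
\end{align*}
and since $C$ is strict, $\varphi(0^+) = \infty$, so the subtracted term is $\psi(\infty) = 0$; this yields $C(u_0,v)$ as desired.

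The non-strict case proceeds along the same lines but requires splitting the integral at the threshold $u^* := (f^0)^{-1}(v)$, i.e.\ the $u$ for which $\varphi(u) + \varphi(v) = \varphi(0)$, equivalently $v = f^0(u)$. For $u$ with $v \geq f^0(u)$ (i.e.\ $\varphi(u)+\varphi(v) \leq \varphi(0)$) the integrand is as before; for $u$ with $v < f^0(u)$ it is $0$, consistent with $\psi(\varphi(u)+\varphi(v)) = 0$ there. One then has to be slightly careful that the a.e.\ identity $D^-\varphi(u) = 1/D^-\psi(\varphi(u))$ still holds — it does, because $\varphi(u) < \varphi(0)$ for $u \in (0,1)$, so $D^-\psi(\varphi(u))$ is strictly negative and finite — and that the fundamental-theorem step across the point where $\psi$ hits $0$ is legitimate, which follows from absolute continuity of $\psi$ on $[0,\varphi(0)]$ together with $\psi \equiv 0$ on $[\varphi(0),\infty]$. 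Collecting the pieces again gives $\int_0^{u_0} K_C(u,[0,v])\, \mathrm{d}\lambda(u) = \psi(\varphi(u_0)+\varphi(v)) = C(u_0,v)$.

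The main obstacle I anticipate is purely technical: justifying the chain rule $\frac{\mathrm{d}}{\mathrm{d}u}\psi(\varphi(u)+\varphi(v)) = D^-\psi(\varphi(u)+\varphi(v)) D^-\varphi(u)$ a.e.\ and the fundamental theorem of calculus for these one-sidedly differentiable convex functions \emph{without} assuming differentiability or strictness of $\varphi$ — one must handle the countable set of kink points of $\varphi$ and $\psi$ and confirm they form a $\lambda$-null set that also maps to a $\lambda$-null set under the relevant substitutions (so that the a.e.\ identities survive the change of variables). This is exactly the kind of care the paper flags when it says the generator "neither needs to be strict nor twice differentiable," and it is where I would expect to invoke standard facts about monotone/convex functions (Lebesgue's theorem on differentiability of monotone functions, absolute continuity of convex functions on compact subintervals of the interior of their domain). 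Everything else is bookkeeping.
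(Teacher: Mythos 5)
Your plan is correct, but it takes a genuinely different route from the paper. The paper does not verify the disintegration identity from scratch: it starts from the closed form of the Markov kernel in terms of the generator's right-hand derivative, $K_C(u,[0,v]) = D^+\varphi(u)/\big(D^+\varphi\big)(C(u,v))$, which it imports from \cite{sanchez2015singularity,sfx2021weak}, and then only has to prove the \emph{pointwise} identity $D^+\varphi(t) = 1/D^-\psi(\varphi(t))$ by rewriting the difference quotient of $\varphi$ as the reciprocal of a difference quotient of $\psi$ (the orientation reversal of $\varphi$ is what pairs the right-hand derivative of $\varphi$ with the left-hand derivative of $\psi$), followed by a short case distinction at the boundary $\varphi(u)+\varphi(v)=\varphi(0)$. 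Your approach re-derives the kernel directly from $\int_0^{u_0} K_C(u,[0,v])\,\mathrm{d}\lambda(u) = C(u_0,v)$ via the a.e.\ chain rule and the fundamental theorem of calculus; this is self-contained (no reliance on the cited formula) at the cost of the measure-theoretic bookkeeping you flag, all of which does go through: $\varphi$ and $\psi$ are locally Lipschitz on the interiors of their domains, injectivity of $\varphi$ sends the countable kink set of $\psi$ to a $\lambda$-null set, and a $\pi$--$\lambda$ argument reduces the disintegration check to rectangles. Two small points to tighten if you write this up: (i) your identity $D^-\varphi(u) = 1/D^-\psi(\varphi(u))$ holds a.e.\ (at common differentiability points), whereas the paper's version with $D^+\varphi$ is exact everywhere, which matters if one wants the stated kernel formula pointwise rather than up to a null set of $u$; (ii) for $F\mapsto K_C(u,F)$ to be a probability measure you need right-continuity of $v\mapsto K_C(u,[0,v])$, not just monotonicity --- this is precisely where the left-hand derivative $D^-\psi$ (left-continuous, composed with the decreasing continuous $\varphi$) is needed, and is worth stating explicitly.
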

\begin{proof}
According to \cite{sanchez2015singularity,sfx2021weak}, 
if $C$ is strict, then 
\begin{equation*} %\label{AC.MK.Phi}
    K_C(u,[0,v]) 
		= \begin{cases}
      1 & \text{if } u \in \lbrace 0,1 \rbrace
			\\
      \frac{D^+\varphi(u)}{\left( D^+\varphi\right) (C(u,v)) } & \text{if } u \in (0,1) 
      \end{cases}
\end{equation*}
is (a version of) its Markov kernel,
and, if $C$ is non-strict, then
\begin{equation*} %\label{AC.MK.Phi}
    K_C(u,[0,v]) = \begin{cases}
                        1 & \text{if } u \in \lbrace 0,1 \rbrace
												\\
                        \frac{D^+\varphi(u)}{\left( D^+\varphi\right) (C(u,v)) } & \text{if } u \in (0,1) \text{ and } v \geq f^0(u) 
												\\
                        0 & \text{if } u \in (0,1) \text{ and } v < f^0(u)
                    \end{cases}
\end{equation*}
is (a version of) its Markov kernel.
Since $\varphi$ is decreasing, for $t \in (0,1)$ if $C$ is strict (and for $t \in [0,1)$ if $C$ is non-strict), $h > 0$ such that $t+h \in (0,1)$, and $l:=\varphi(t+h) - \varphi(t)<0$, we first have
$$
  \frac{\varphi(t+h) - \varphi(t)}{h} 
	= \frac{1}{\frac{\psi(\varphi(t+h)) - \psi(\varphi(t))}{\varphi(t+h) - \varphi(t)}} 
	= \frac{1}{\frac{\psi(\varphi(t)+l) - \psi(\varphi(t))}{l}}
$$
and hence
$$
  D^+\varphi(t) 
  = \lim_{h \to 0^+} \frac{\varphi(t+h) - \varphi(t)}{h} 
	= \lim_{l \to 0^-} \frac{1}{\frac{\psi(\varphi(t)+l) - \psi(\varphi(t))}{l}} 
	= \frac{1}{D^-\psi(\varphi(t))}
$$
Now, consider $u \in (0,1)$. 
If $C$ is strict, 
\begin{align*}
  \frac{D^+\varphi(u)}{\left( D^+\varphi\right) (C(u,v))} 
	%= \frac{\frac{1}{D^-\psi(\varphi(u))}}{\frac{1}{D^-\psi(\varphi(u)+ \varphi(v))}} 
	&= \begin{cases}
			\frac{D^-\psi(\varphi(u) + \varphi(v))}{D^-\psi(\varphi(u))}
			& \textrm{if } \varphi(u) + \varphi(v) < \varphi(0)
			\\
			0
			& \textrm{if } \varphi(u) + \varphi(v) = \varphi(0)
		\end{cases}
  \\&= \frac{D^-\psi(\varphi(u) + \varphi(v))}{D^-\psi(\varphi(u))}
\end{align*}
If $C$ is non-strict, then $v \geq f^0(u)$ if and only if $\varphi(u) + \varphi(v) \leq \varphi(0)$ and in this case
\begin{align*}
  \frac{D^+\varphi(u)}{\left( D^+\varphi\right) (C(u,v))} 
	%= \frac{\frac{1}{D^-\psi(\varphi(u))}}{\frac{1}{D^-\psi(\varphi(u)+ \varphi(v))}} 
	&= \begin{cases}
			\frac{D^-\psi(\varphi(u) + \varphi(v))}{D^-\psi(\varphi(u))}
			& \textrm{if } \varphi(u) + \varphi(v) < \varphi(0)
			\\
			\frac{D^-\psi(\varphi(0))}{D^-\psi(\varphi(u))}
			& \textrm{if } \varphi(u) + \varphi(v) = \varphi(0)
		\end{cases}
  \\&= \frac{D^-\psi(\varphi(u) + \varphi(v))}{D^-\psi(\varphi(u))}
\end{align*}
This proves the assertion.
\end{proof}

For $\psi$ twice differentiable, (a version of) the Markov kernel of an Archimedean copula is given in \cite{caperaa1993spearman, muller2005}.

For completeness, we repeat the following necessary and sufficient condition for an Archimedean copula to be TP2 which is well known, see, e.g., \cite{nelsen2007introduction};
having Lemma \ref{DP.TP2.Charakterisierung} in mind, the equivalence of (b) and (c) is straightforward.
A function $f: [0,\infty] \to (0,\infty)$ is called log-convex if $\log f|_{(0,\infty)}$ is convex.

\begin{Proposition}
Let $C$ be an Archimedean copula with generator $\varphi$ and co-generator $\psi$. 
Then the following statements are equivalent:
\begin{enumerate} \itemsep=0mm
  \item[(a)] $C$ is LTD.
  \item[(b)] $C$ is TP2.
  \item[(c)] $\psi$ is log-convex.
\end{enumerate} 
\end{Proposition}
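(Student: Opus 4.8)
The plan is to prove the chain $(a)\Leftrightarrow(c)$ and $(c)\Leftrightarrow(b)$, the latter being essentially routine given the tools already in the excerpt. I would start with the well-known characterisation of LTD for Archimedean copulas. Recall that $C$ is LTD if and only if, for each fixed $v$, the map $u\mapsto C(u,v)/u$ is non-increasing on $(0,1)$. Writing $C(u,v)=\psi(\varphi(u)+\varphi(v))$ and making the substitution $s=\varphi(u)$ (a strictly decreasing bijection from $(0,1)$ onto $(0,\varphi(0))$), this is equivalent to requiring that $s\mapsto \psi(s+t)/\psi(s)$ be non-decreasing in $s$ for every fixed $t=\varphi(v)\geq 0$. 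Standard one-variable monotonicity arguments (using convexity of $\psi$, hence local Lipschitz continuity and differentiability off a countable set, together with right-continuity of $D^+\psi$) show that this ratio condition for all $t\ge 0$ is exactly log-convexity of $\psi$ on $(0,\infty)$: indeed, $\log\psi(s+t)-\log\psi(s)$ non-decreasing in $s$ for all $t>0$ is the definition of midpoint/increment convexity of $\log\psi$, which for a function that is already continuous (here $\psi$ is continuous and strictly positive on $[0,\varphi(0))$) is equivalent to convexity. This gives $(a)\Leftrightarrow(c)$.

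For $(b)\Leftrightarrow(c)$ I would invoke Lemma \ref{DP.TP2.Charakterisierung} as the excerpt suggests. The TP2 inequality \eqref{CTP2.Def} for $C$ reads $C(u_1,v_1)C(u_2,v_2)\ge C(u_1,v_2)C(u_2,v_1)$ for $u_1\le u_2$, $v_1\le v_2$; substituting $C=\psi(\varphi(\cdot)+\varphi(\cdot))$ and setting $a=\varphi(u_2)\le b=\varphi(u_1)$, $c=\varphi(v_2)\le d=\varphi(v_1)$ (all in $[0,\varphi(0)]$), this becomes $\psi(b+d)\psi(a+c)\ge\psi(b+c)\psi(a+d)$, i.e. $\psi(a+c)/\psi(a+d)\ge\psi(b+c)/\psi(b+d)$ whenever $a\le b$ and $c\le d$ — precisely that $s\mapsto\psi(s+c)/\psi(s+d)$ is non-increasing, equivalently $\psi(s+h)/\psi(s)$ is non-decreasing in $s$ for every increment $h=d-c\ge 0$, which is again log-convexity of $\psi$. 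One must be slightly careful at the boundary (when some argument equals or exceeds $\varphi(0)$, so $\psi$ vanishes): there the inequality holds trivially since the left side is $\ge 0$ and at least one factor on the right is $0$ — or one reduces, as in Remark \ref{MKTP2.Def.2}, to the range where all quantities are positive. I would phrase this reduction explicitly to keep the non-strict case honest.

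The main obstacle I anticipate is the care needed in the equivalence "ratio condition $\Leftrightarrow$ log-convexity'' when $\psi$ is merely convex (not twice differentiable) and possibly non-strict, so that $\psi$ hits $0$ at $\varphi(0)<\infty$. One must (i) restrict attention to the interval $[0,\varphi(0))$ where $\log\psi$ is finite, (ii) use that a convex function is continuous on the interior of its domain so that the discrete increment characterisation of convexity of $\log\psi$ upgrades to genuine convexity, and (iii) check that the boundary behaviour at $\varphi(0)$ does not break either the LTD or the TP2 inequality — this is where being slightly more careful than the textbook strict/smooth case pays off. Everything else — the substitutions, and the reduction of the two-variable TP2 inequalities to one-variable monotonicity of a ratio — is bookkeeping that I would carry out directly.
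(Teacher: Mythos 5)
The paper itself gives no proof of this proposition --- it cites the literature for $(a)\Leftrightarrow(b)$ and notes that $(b)\Leftrightarrow(c)$ is immediate from Lemma \ref{DP.TP2.Charakterisierung} --- so your proposal is filling in a deliberately omitted argument. Your main line of reasoning is sound and is the standard one: both the LTD condition and the TP2 condition reduce, after the substitution $s=\varphi(u)$, $t=\varphi(v)$, to the statement that $s\mapsto\psi(s+t)/\psi(s)$ is non-decreasing for every fixed increment $t\geq 0$, i.e.\ to Wright convexity of $\log\psi$, which upgrades to genuine convexity because $\psi$ is convex, positive and hence continuous on the interior of $\{\psi>0\}$. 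This is consistent with the route the paper hints at (for a modular, coordinatewise decreasing $G(u,v)=\varphi(u)+\varphi(v)$, $2$-increasingness of $\log\psi\circ G$ is exactly convexity of $\log\psi$).

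There is, however, one concretely wrong step: your claim that at the boundary, ``when some argument equals or exceeds $\varphi(0)$, the inequality holds trivially since the left side is $\ge 0$ and at least one factor on the right is $0$.'' With $a=\varphi(u_2)\le b=\varphi(u_1)$ and $c=\varphi(v_2)\le d=\varphi(v_1)$, the TP2 inequality reads $\psi(b+d)\,\psi(a+c)\ge\psi(b+c)\,\psi(a+d)$, and the \emph{largest} argument $b+d$ sits on the \emph{left}-hand side. If $b+d\ge\varphi(0)$ while both middle arguments $b+c$ and $a+d$ stay below $\varphi(0)$, the left side is $0$ and the right side is strictly positive, so the inequality \emph{fails} --- this is precisely why non-strict Archimedean copulas are never TP2 (nor LTD: since $C(u,v)/u\to v$ as $u\to 1^-$, LTD forces $C\ge\Pi>0$ on $(0,1)^2$). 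The repair is easy and should replace the incorrect remark: observe that each of (a), (b), (c) separately forces $C>0$ on $(0,1)^2$, equivalently $\varphi(0)=\infty$ (recall the paper defines log-convexity only for functions into $(0,\infty)$), so the three statements are simultaneously false in the non-strict case and you may assume strictness throughout, after which no boundary configuration arises. With that substitution your argument is complete.
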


In the sequel we examine under which conditions on the co-generator $\psi$ the corresponding Archimedean copula $C$ is MK-TP2.
We prove the result by showing that SI implies log-convexity of $-D^-\psi$ (Lemma \ref{AC.SI}) which in turn implies MK-TP2 (Lemma \ref{AC.MKTP2}).
In passing we slightly extend a well-known result from literature (see, e.g., \cite{caperaa1993spearman, muller2005}) by proving the equivalence of SI and log-convexity of $-D^-\psi$ for arbitrary Archimedean copulas (i.e., $C$ needs not to be strict and $\psi$ needs not to be twice differentiable).

Recall that every TP2 (and also every SI) copula satisfies $C(u,v)>0$ for all $(u,v) \in (0,1)^2$.
In the Archimedean setting, the latter is equivalent to 
%$\varphi(u) + \varphi(v) < \varphi(0)$ 
$v > f^0(u)$ for all $(u,v) \in (0,1)^2$.
This implies that a non-strict Archimedean copula is neither TP2 nor SI and hence not MK-TP2.
Therefore, from now on we can restrict ourselves to strict Archimedean copulas.
%\begin{Lemma}
%Every non-strict Archimedean copula is not MK-TP2.
%\end{Lemma}
%\begin{proof}
%Since $\varphi$ is non-strict we have $c := \varphi(0) \in (0,\infty)$ and hence $D^-\psi(t) = 0$ for $t > c$ and $D^-\psi(t) < 0$ for $t < c$. 
%For arbitrary $u_1 \in (0,1)$ choose $v_1 \in(0,1)$ such that $\varphi(u_1) + \varphi(v_1) > c$ or equivalently $ v_1 < f^0(u_1)$, hence $K_\varphi(u_1,[0,v_1]) = 0$. Since $\varphi$ is strictly decreasing an $\varphi(1)=0$ there exists $u_2 > u_1$ and $v_2 > v_1$ such that $\varphi(u_1) + \varphi(v_2) < c$ as well as $\varphi(u_2) + \varphi(v_1) < c$ hold. This yields that
%    \begin{align*}
%        D^-\psi(\underbrace{\varphi(u_1) + \varphi(v_2)}_{<c}) D^-\psi(\underbrace{\varphi(u_2) + \varphi(v_1)}_{<c}) > 0
%    \end{align*}
%    or equivalently $K_\varphi(u_1,[0,v_2]) K_\varphi(u_2,[0,v_1]) > 0$. Since $K_\varphi(u_1,[0,v_1]) = 0$ holds we %obtain that the copula $C\varphi$ is not MK-TP2.
%\end{proof}
The following lemma imposes further restrictions on an Archimedean co-generator for the corresponding copula $C$ to be MK-TP2:

\begin{Lemma} \label{AC.Discont.Lemma}
Let $C$ be a strict Archimedean copula with generator $\varphi$ and co-generator $\psi$.  
If $D^-\psi$ has a discontinuity point then $C$ is not SI and hence not MK-TP2.
\end{Lemma}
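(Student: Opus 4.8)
The plan is to work with the explicit form of the Markov kernel from Theorem~\ref{AC.MK}. Since $C$ is strict, for $u \in (0,1)$ we have $K_C(u,[0,v]) = \frac{D^-\psi(\varphi(u)+\varphi(v))}{D^-\psi(\varphi(u))}$, and the SI property says that for each fixed $v \in (0,1)$ the map $u \mapsto K_C(u,[0,v])$ is non-increasing. First I would translate this into a statement purely about $g := -D^-\psi$, which is a positive (on $(0,\infty)$, since $\psi$ is strictly decreasing there), non-decreasing, left-continuous function by the conventions set up before the lemma. Writing $a = \varphi(u) \in (0,\infty)$ and $b = \varphi(v) > 0$, the SI condition becomes: the map $a \mapsto \frac{g(a+b)}{g(a)}$ is non-decreasing in $a$ for every fixed $b > 0$ (the reversal of monotonicity direction comes from $\varphi$ being strictly decreasing, so as $u$ increases $a$ decreases). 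Equivalently, $g(a_1)g(a_2+b) \le g(a_2)g(a_1+b)$ whenever $a_1 \le a_2$, i.e. $g$ satisfies a multiplicative (TP2-type / log-convexity-type) inequality in the shifted variable.

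Next I would suppose, for contradiction, that $D^-\psi$ — equivalently $g$ — has a discontinuity at some point $s_0 \in (0,\infty)$. Because $g$ is non-decreasing and left-continuous, a discontinuity at $s_0$ means $g(s_0) < g(s_0+)$, i.e. there is a jump $\lim_{s \downarrow s_0} g(s) =: g(s_0+) > g(s_0)$. The idea is to feed this jump into the inequality $g(a_1)g(a_2+b) \le g(a_2)g(a_1+b)$ by choosing the four evaluation points to straddle $s_0$ in a way that forces a contradiction. Concretely, I would aim to pick $a_1 < a_2$ and $b$ so that $a_1 + b = s_0$ (or approaches $s_0$ from the left) while $a_2 + b > s_0$, so that $g(a_1+b) = g(s_0)$ sits below the jump while $g(a_2+b) \ge g(s_0+)$ sits above it; simultaneously I want $a_1, a_2$ both bounded away from $s_0$ (say both less than $s_0$, or both in a region where $g$ is continuous) so that $g(a_1)$ and $g(a_2)$ are comparable and close. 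Then the inequality reads roughly $g(a_1) \, g(s_0+) \le g(a_2) \, g(s_0)$, and by letting $a_2 \downarrow a_1$ (using left-continuity of $g$, or continuity of $g$ at $a_1$ if $a_1$ is a continuity point — and continuity points are dense) the ratio $g(a_2)/g(a_1) \to 1$, which would force $g(s_0+) \le g(s_0)$, contradicting the assumed jump.

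The main obstacle I expect is the bookkeeping needed to guarantee that suitable $a_1, a_2, b$ actually lie in the admissible ranges, i.e. that there exist $u_1 \le u_2$ in $(0,1)$ and $v \in (0,1)$ realizing the chosen $\varphi$-values — this requires knowing that $\varphi$ maps $(0,1)$ onto $(0,\infty)$ (true for strict generators) and choosing $s_0$ small enough, together with $b$ small enough, that $a_1 = s_0 - b$ and $a_2$ slightly larger still correspond to points $u_1, u_2$ bounded away from $0$ and $1$. A secondary subtlety is ensuring $g(a_1) > 0$ so division is legitimate and the kernel formula applies; this holds because $a_1 \in (0,\infty)$ and $D^-\psi < 0$ strictly on $(0,\varphi(0)) = (0,\infty)$. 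One must also take care that $a_1$ can be chosen to be (or be approximated by) a continuity point of $g$ — harmless since $g$, being monotone, has at most countably many discontinuities, so continuity points accumulate at $a_1$ from the right, and one can let $a_2$ run through such points. Once the point-selection is set up cleanly, plugging into $K_C(u_1,[0,v])\,K_C(u_2,[0,v']) \le \cdots$ — or more directly into the SI monotonicity of $u \mapsto K_C(u,[0,v])$ — and passing to the limit yields the contradiction, so $D^-\psi$ must in fact be continuous; the failure of SI then propagates to the failure of MK-TP2 via Figure~\ref{fig.Implications}.
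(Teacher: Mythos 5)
Your plan is essentially the paper's own proof: both arguments straddle the jump of $D^-\psi$ at the discontinuity point by arranging one of the shifted arguments $\varphi(u)+\varphi(v)$ at (or just below) the jump and the other just above it, and then use continuity of $D^-\psi$ on a one-sided neighbourhood of a nearby point to force the ``base'' ratio towards $1$ (the paper does this quantitatively, choosing an explicit $\varepsilon\in\bigl(0,\delta y/(y+\delta)\bigr)$ and a point $w^\ast$ where $D^-\psi$ varies by at most $\varepsilon$, rather than by passing to a limit $a_2\downarrow a_1$). The argument is sound, but be aware of two compensating sign slips in your write-up: since $D^-\psi$ is non-decreasing, $g=-D^-\psi$ is non-\emph{increasing}, so a discontinuity means $g(s_0)>g(s_0+)$, and the cross-multiplied form of SI for $a_1\le a_2$ is $g(a_2)\,g(a_1+b)\le g(a_1)\,g(a_2+b)$; with the correct signs your limiting argument yields $g(s_0)\le g(s_0+)$, which is exactly the required contradiction.
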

\begin{proof}
Recall that $C$ is SI if and only if 
$u \mapsto K_C(u, [0, v])$ is non-increasing.
Further recall that $D^-\psi$ is non-decreasing, left-continuous on $(0,\infty]$ and, since $C$ is strict, $D^-\psi(x) < 0$ holds for all $x \in (0,\infty)$.
\\
Since $D^-\psi$ has a discontinuity point there exists $x^\ast\in(0,\infty)$ and $\delta \in (0,1)$ such that 
$y := D^-\psi(x^\ast) < D^-\psi(x^\ast+) = y + \delta < 0$.
Further, there exists $z^\ast \in (0,x^\ast)$ such that $z \mapsto D^-\psi(z)$ is continuous on $(z^\ast,x^\ast)$.
Choosing $\varepsilon \in \big(0, \delta y / (y+\delta) \big)$,
there exists some $w^\ast \in (z^\ast,x^\ast)$ such that 
$ y-\varepsilon \leq D^- \psi(z) \leq y $
for all $z \in [w^\ast,x^\ast]$.
Now, consider 
$u_1 = \psi(x^\ast)$, 
$u_2 = \psi(w^\ast)$,
$v_1 = \psi(1/n)$ for some $n \in \mathbb{N}$ such that $w^\ast+1/n < x^\ast$, and 
$v_2 := 1$.
Then, $u_1 < u_2$, $v_1 < v_2$
and monotonicity implies
\begin{eqnarray*}
  x^\ast + 1/n
	&   =  & \varphi(u_1) + \varphi(v_1)
	\\
	& \geq & \max\{\varphi(u_1) + \varphi(v_2), \varphi(u_2) + \varphi(v_1)\}
	\\
	& \geq & \min\{\varphi(u_1) + \varphi(v_2), \varphi(u_2) + \varphi(v_1)\} 
	\\
	& \geq & \varphi(u_2) + \varphi(v_2)
	\\
	&   =  & w^\ast
\end{eqnarray*}
Thus, using $\varphi(v_2)=0$
\begin{align}\label{Lemma.4.3}
  \frac{K_C(u_2, [0, v_1])} 
	     {K_C(u_1, [0, v_1])}
	& = \frac{D^-\psi(\varphi(u_2) + \varphi(v_1)) \, D^-\psi(\varphi(u_1)+ \varphi(v_2))}
								{D^-\psi(\varphi(u_2)+ \varphi(v_2)) \, D^-\psi(\varphi(u_1) + \varphi(v_1))}
	\\ &  = \frac{D^-\psi(w^\ast + 1/n) \, D^-\psi(x^\ast)}
								{D^-\psi(w^\ast) \, D^-\psi(x^\ast + 1/n)}
	\notag
	 = \frac{D^-\psi(w^\ast + 1/n) \, y}
								{D^-\psi(w^\ast) \, D^-\psi(x^\ast + 1/n)}
	 \notag\\& \geq \frac{y^2}{(y-\varepsilon) (y+\delta)}
	  >    \frac{y^2}{\big(y-\tfrac{\delta y}{y+\delta}\big) (y+\delta)}
	  =    1 \notag
\end{align}
This proves the assertion.
\end{proof}

Equation \eqref{Lemma.4.3} in the proof of Lemma \ref{AC.Discont.Lemma} shows very clearly that in the Archimedean case the SI property is actually a TP2 property.

The next lemma provides a necessary condition for an Archimedean copula to be SI:
%it also slightly extends a well-known result from literature (see, e.g., \cite{caperaa1993spearman, muller2005}).

\begin{Lemma}\label{AC.SI}
Let $C$ be an Archimedean copula with generator $\varphi$ and co-generator $\psi$. 
If $C$ is SI, then $C$ is strict, $D^-\psi$ is continuous and $-D^-\psi$ is log-convex.
\end{Lemma}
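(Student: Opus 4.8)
The plan is to dispose of the first two conclusions quickly and then concentrate on log-convexity. Strictness is immediate: an SI copula satisfies $C(u,v)>0$ on $(0,1)^2$, and, as recalled just before the lemma, for an Archimedean copula this already forces $\varphi(0)=\infty$. Continuity of $D^-\psi$ is then exactly the contrapositive of Lemma~\ref{AC.Discont.Lemma}. So from here on $C$ is strict, $D^-\psi$ is continuous, and (since $C$ is strict) $D^-\psi(x)<0$ for every $x\in(0,\infty)$; accordingly I set $g:=-D^-\psi$, a continuous, strictly positive function on $(0,\infty)$, and the remaining task is to show that $\log g$ is convex on $(0,\infty)$.

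The next step is to translate SI into a statement about $g$. By Theorem~\ref{AC.MK}, for $u\in(0,1)$,
\[
  K_C(u,[0,v])=\frac{D^-\psi(\varphi(u)+\varphi(v))}{D^-\psi(\varphi(u))}=\frac{g(\varphi(u)+\varphi(v))}{g(\varphi(u))}.
\]
Since $C$ is strict, $\varphi$ restricts to a continuous strictly decreasing bijection of $(0,1)$ onto $(0,\infty)$; hence, writing $s=\varphi(u)$ and $t=\varphi(v)$, the SI property (that $u\mapsto K_C(u,[0,v])$ is non-increasing for every $v\in(0,1)$, in the sense recalled in the proof of Lemma~\ref{AC.Discont.Lemma}) is equivalent to: for every $t>0$, the map $s\mapsto g(s+t)/g(s)$ is non-decreasing on $(0,\infty)$. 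Taking logarithms and setting $h:=\log g$, this reads: for all $0<s_1\le s_2$ and all $t>0$,
\[
  h(s_1+t)-h(s_1)\ \le\ h(s_2+t)-h(s_2).
\]

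From this increment inequality I would derive midpoint convexity of $h$ by the usual telescoping choice: given $0<a<b$, apply the inequality with $s_1=a$, $s_2=\tfrac{a+b}{2}$ and $t=\tfrac{b-a}{2}$, obtaining $h\!\left(\tfrac{a+b}{2}\right)-h(a)\le h(b)-h\!\left(\tfrac{a+b}{2}\right)$, i.e. $h\!\left(\tfrac{a+b}{2}\right)\le\tfrac12\bigl(h(a)+h(b)\bigr)$. As $g$, and hence $h$, is continuous on $(0,\infty)$ (this is where continuity of $D^-\psi$ enters), a midpoint-convex continuous function on an interval is convex; therefore $h=\log(-D^-\psi)$ is convex on $(0,\infty)$, i.e. $-D^-\psi$ is log-convex, which is what remained to be shown.

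The only genuinely delicate part is the reduction in the second paragraph: one must be sure that as $(u,v)$ ranges over $(0,1)^2$ the reparametrised pair $(\varphi(u),\varphi(v))$ ranges over all of $(0,\infty)^2$ (this uses strictness and continuity of $\varphi$), so that the increment inequality is available at every point needed for the midpoint-convexity argument, and one should note that SI is being tested on the explicit Markov-kernel version furnished by Theorem~\ref{AC.MK}, consistently with the convention used in the proof of Lemma~\ref{AC.Discont.Lemma}. Everything else is routine.
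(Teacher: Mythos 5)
Your proposal is correct and follows essentially the same route as the paper: strictness from $C>0$ on $(0,1)^2$, continuity of $D^-\psi$ via Lemma~\ref{AC.Discont.Lemma}, then rewriting SI as the increment inequality for $\log(-D^-\psi)$ and deducing midpoint convexity (your choice $s_1=a$, $s_2=\tfrac{a+b}{2}$, $t=\tfrac{b-a}{2}$ is exactly the paper's choice $u_1=\psi(\tfrac{x+y}{2})$, $u_2=\psi(x)$, $v=\psi(\tfrac{y-x}{2})$ after the substitution $s=\varphi(u)$), concluding by continuity. The only cosmetic difference is that you make the reparametrisation onto $(0,\infty)^2$ explicit, whereas the paper works directly with $u_1,u_2,v$ through $\psi$.
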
\pagebreak
\begin{proof}
Setting $f:= \log(-D^-\psi)$, the copula $C$ is SI if and only if
\begin{eqnarray*}
  0 
	& \leq & \log \left( \frac{K_C(u_1,[0,v])}{K_C(u_2,[0,v])} \right) 
	%\\
	%&   =  & \log \left( \frac{\frac{\psi'(\varphi(u_1) + \varphi(v_1))}{\psi'(\varphi(u_1))}\cdot\frac{\psi'(\varphi(u_2) + \varphi(v_2))}{\psi'(\varphi(u_2))}}{\frac{\psi'(\varphi(u_1) + \varphi(v_2))}{\psi'(\varphi(u_1))} \cdot\frac{\psi'(\varphi(u_2) + \varphi(v_1))}{\psi'(\varphi(u_2))}} \right) \notag
	\\
	&   =  & \log \left( \frac{D^-\psi(\varphi(u_1) + \varphi(v)) \, D^-\psi(\varphi(u_2))}
														{D^-\psi(\varphi(u_1)) \, D^-\psi(\varphi(u_2) + \varphi(v))} 
														\right) 
	\\
	&   =  & f(\varphi(u_1) + \varphi(v)) + f(\varphi(u_2)) - f(\varphi(u_1)) - f(\varphi(u_2) + \varphi(v)) \qquad\qquad (\ast)
\end{eqnarray*}
for all $0 < u_1 \leq u_2 < 1$ and all $v \in (0,1)$ for which $D^-\psi(\varphi(u_1) + \varphi(v)) < 0$.
\\
Assume that $C$ is SI.
Then $C$ is strict (and hence $D^-\psi(x) < 0$ for all $x \in (0,\infty)$) and, by Lemma \ref{AC.Discont.Lemma}, $D^-\psi$ is continuous.
We now prove that $f$ is convex. 
Since $f$ is continuous, it remains to show mid-point convexity.
To this end, consider $x,y \in (0,\infty)$ with $x<y$ and set
$u_1 := \psi((x+y)/2)$,
$u_2:= \psi(x)$, and
$v:=\psi((y-x)/2)$.
Then ($\ast$) implies
\begin{eqnarray*}
  0 
	& \leq & f \big( \tfrac{x+y}{2} + \tfrac{y-x}{2} \big) + f (x) - f \big( \tfrac{x+y}{2} \big) - f \big( x + \tfrac{y-x}{2} \big) 
	\\
	&   =  & f(y) + f(x) - 2 f \big( \tfrac{x+y}{2} \big) 
\end{eqnarray*}
and hence $f \big( (x+y)/2 \big) \leq \big(f(y) + f(x) \big)/2$.
This proves the assertion. 
\end{proof}

The next result provides a sufficient condition for an Archimedean copula to be MK-TP2:

\begin{Lemma}\label{AC.MKTP2}
Let $C$ be an Archimedean copula with generator $\varphi$ and co-generator $\psi$. 
%If $C$ is strict, $D^-\psi$ is continuous and log-convex,
If $-D^-\psi$ is log-convex,
then $C$ is MK-TP2.
\end{Lemma}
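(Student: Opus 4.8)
The plan is to reduce the MK-TP2 inequality \eqref{MKTP2.Def} for $C$ to a two-point convexity inequality for the real-valued function $\log(-D^-\psi)$ on $(0,\infty)$. First I would note that the hypothesis already forces $C$ to be strict: log-convexity is only defined for functions with values in $(0,\infty)$, so requiring $-D^-\psi$ to be log-convex presupposes $D^-\psi(x)<0$ for every $x\in(0,\infty)$, and in the Archimedean setting this holds precisely when $C$ is strict. Thus Theorem~\ref{AC.MK} applies in its strict form and gives, for all $u,v\in(0,1)$,
\[
  K_C(u,[0,v])=\frac{D^-\psi(\varphi(u)+\varphi(v))}{D^-\psi(\varphi(u))},
\]
where numerator and denominator are both strictly negative (for $u,v\in(0,1)$ the arguments $\varphi(u)+\varphi(v)$ and $\varphi(u)$ lie in $(0,\infty)$), so in particular every such $K_C(u,[0,v])$ is strictly positive.

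Next I would fix $0<u_1\le u_2<1$ and $0<v_1\le v_2<1$. If $u_1=u_2$ or $v_1=v_2$, the left-hand side of \eqref{MKTP2.Def} vanishes identically, so we may assume $u_1<u_2$ and $v_1<v_2$. Abbreviating $g:=-D^-\psi$ (so $g>0$ on $(0,\infty)$) and $a_i:=\varphi(u_i)$, $b_i:=\varphi(v_i)$, and dividing \eqref{MKTP2.Def} by the strictly positive quantity $D^-\psi(a_1)\,D^-\psi(a_2)$, the MK-TP2 inequality becomes equivalent to
\[
  g(a_1+b_1)\,g(a_2+b_2)\;\ge\;g(a_1+b_2)\,g(a_2+b_1).
\]
Since $\varphi$ is strictly decreasing we have $a_1>a_2>0$ and $b_1>b_2>0$; setting $t:=a_1+b_1$, $s:=a_2+b_2$, $p:=a_1+b_2$, $q:=a_2+b_1$ we obtain $s<p,q<t$ and $p+q=s+t$, and the displayed inequality reads $\log g(s)+\log g(t)\ge\log g(p)+\log g(q)$.

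Finally I would invoke the elementary fact that if $h$ is convex on an interval and $s\le p\le q\le t$ satisfy $p+q=s+t$, then $h(s)+h(t)\ge h(p)+h(q)$: writing $p=\lambda s+(1-\lambda)t$ for a suitable $\lambda\in[0,1]$ forces $q=(1-\lambda)s+\lambda t$, and summing the convexity estimates $h(p)\le\lambda h(s)+(1-\lambda)h(t)$ and $h(q)\le(1-\lambda)h(s)+\lambda h(t)$ yields the claim. Applying this with $h=\log g$ --- which is convex by hypothesis and finite on $(0,\infty)$ since there $g$ is finite and strictly positive --- completes the argument.

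I do not expect a genuine obstacle here: the only two points requiring a little care are the observation that log-convexity of $-D^-\psi$ already encodes strictness of $C$ (so that the strict-case Markov kernel formula is the relevant one and no denominator vanishes), and the bookkeeping that the ``mixed'' arguments $a_1+b_2$ and $a_2+b_1$ lie between $a_2+b_2$ and $a_1+b_1$ while having the same sum --- which is exactly the configuration on which convexity of $\log(-D^-\psi)$ forces the desired inequality.
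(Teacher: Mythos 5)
Your proof is correct and follows essentially the same route as the paper: both reduce \eqref{MKTP2.Def}, via Theorem \ref{AC.MK}, to $2$-increasingness of $(u,v)\mapsto \log\bigl(-D^-\psi(\varphi(u)+\varphi(v))\bigr)$, which holds because the inner function is additive and $\log(-D^-\psi)$ is convex. The only difference is cosmetic: where the paper invokes the composition lemma from \cite[p.219]{marshallolkin2011}, you verify the resulting two-point convexity inequality (equal sums, nested intervals) by hand, and you make explicit the observation that log-convexity of $-D^-\psi$ already forces $C$ to be strict.
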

\begin{proof}
Defining the functions $f: (-\infty,0) \to \mathbb{R}$ and $G: (0,1)^2 \to (-\infty,0)$ by $f(x):= \log(-D^-\psi (-x))$ and $G(u,v):= -(\varphi(u)+\varphi(v))$, the copula $C$ is MK-TP2 if 
\begin{eqnarray*}
  0 
	& \leq & \log \left( \frac{K_C(u_1,[0,v_1]) \, K_C(u_2,[0,v_2])}{K_C(u_1,[0,v_2]) \, K_C(u_2,[0,v_1])} \right) 
	%\\
	%&   =  & \log \left( \frac{\frac{\psi'(\varphi(u_1) + \varphi(v_1))}{\psi'(\varphi(u_1))}\cdot\frac{\psi'(\varphi(u_2) + \varphi(v_2))}{\psi'(\varphi(u_2))}}{\frac{\psi'(\varphi(u_1) + \varphi(v_2))}{\psi'(\varphi(u_1))} \cdot\frac{\psi'(\varphi(u_2) + \varphi(v_1))}{\psi'(\varphi(u_2))}} \right) \notag
	\\
	&   =  & \log \left( \frac{D^-\psi(-G(u_1,v_1)) \, D^-\psi(-G(u_2,v_2))}
														{D^-\psi(-G(u_1,v_2)) \, D^-\psi(-G(u_2,v_1))} 
														\right) 
	\\
	&   =  & (f \circ G)(u_1,v_1) + (f \circ G)(u_2,v_2) - (f \circ G)(u_1,v_2) - (f \circ G)(u_2,v_1) 
\end{eqnarray*}
for all $0 < u_1 \leq u_2 < 1$ and all $0 < v_1 \leq v_2 < 1$ for which $D^-\psi(-G(u_1,v_1)) < 0$.
% or, equivalently $D^-\psi(\varphi(u_2) + \varphi(v_1)) < 0$.
Since $G$ is a $2$-increasing and coordinatewise non-decreasing function 
and $f$ is non-decreasing and, by assumption, convex (recall that convexity of $\log(-D^-\psi)$ implies convexity of $f$), 
it follows from \cite[p.219]{marshallolkin2011} 
that the composition $f \circ G$ is $2$-increasing.
%Assume that $-D^-\psi$ is log-convex.
%%$C$ is strict, $D^-\psi$ is continuous and log-convex.
%We show that $C$ fulfills $(\ast)$.
%For $0 < u_1 < u_2 < 1$ and $0 < v_1 < v_2 <1$ (since $-D^-\psi$ is log-convex we have $D^-\psi(x) < 0$ for all $x \in (0,\infty)$) 
%we set
%$$
%  \eta := \frac{\varphi(u_1) - \varphi(u_2)}{\varphi(u_1) - \varphi(u_2) + \varphi(v_1) - \varphi(v_2)}
%$$
%Then strict monotonicity of $\varphi$ implies $\eta \in (0,1)$
%and straightforward calculation yields
%\begin{eqnarray*}
%  \eta (\varphi(u_1) + \varphi(v_1)) + (1-\eta) (\varphi(u_2) + \varphi(v_2)) 
%	& = & \varphi(u_1) + \varphi(v_2)
%	\\
%	\eta (\varphi(u_2) + \varphi(v_2))  + (1-\eta)(\varphi(u_1) + \varphi(v_1))  
%	& = & \varphi(u_2) + \varphi(v_1)
%\end{eqnarray*}
%Due to convexity of $f$ we further obtain
%\begin{eqnarray*}
%  f(\varphi(u_1) + \varphi(v_2)) 
%	& \leq & \eta f(\varphi(u_1) + \varphi(v_1)) + (1-\eta) f(\varphi(u_2) + \varphi(v_2)) 
%	\\
%  f(\varphi(u_2) + \varphi(v_1)) 
%	& \leq & \eta f(\varphi(u_2) + \varphi(v_2))  + (1-\eta)f(\varphi(u_1) + \varphi(v_1)) 
%\end{eqnarray*}
%and hence
%$$
%        f(\varphi(u_1) + \varphi(v_2)) + f(\varphi(u_2) + \varphi(v_1)) \leq f(\varphi(u_1) + \varphi(v_1)) + f(\varphi(u_2) + \varphi(v_2))
%$$
%Thus $C$ fulfills ($\ast$).
This proves the assertion.
\end{proof}

Combining Figure \ref{fig.Implications},
Lemma \ref{AC.MKTP2} and Lemma \ref{AC.SI},
we are now in the position to state the main result of this section:
an Archimedean copula is MK-TP2 if and only if it is SI if and only if $-D^-\psi$ is log-convex.
Recall that we slightly extend a well-known result from literature (see, e.g., \cite{caperaa1993spearman, muller2005}) by proving the equivalence of (b) and (c) for arbitrary Archimedean copulas (i.e., $C$ needs not to be strict and $\psi$ needs not to be twice differentiable).

\begin{Theorem}\label{AC.Eqivalence}
Let $C$ be an Archimedean copula with generator $\varphi$ and pseudo-inverse $\psi$. 
Then the following statements are equivalent:
\begin{enumerate} \itemsep=0mm
  \item[(a)] $C$ is MK-TP2.
  \item[(b)] $C$ is SI.
  \item[(c)] $-D^-\psi$ is log-convex.%$C$ is strict, $D^-\psi$ is continuous and log-convex.
\end{enumerate}
\end{Theorem}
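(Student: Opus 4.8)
The plan is to assemble the three claimed equivalences from the machinery already built up in this section, using the implication diagram in Figure~\ref{fig.Implications} as the backbone. The cycle I would close is $(c)\Rightarrow(a)\Rightarrow(b)\Rightarrow(c)$. The implication $(c)\Rightarrow(a)$ is exactly the content of Lemma~\ref{AC.MKTP2}: if $-D^-\psi$ is log-convex, then $C$ is MK-TP2. The implication $(a)\Rightarrow(b)$ is immediate from Figure~\ref{fig.Implications}, which records that MK-TP2 implies SI for any copula (no Archimedean structure needed here). Finally $(b)\Rightarrow(c)$ is Lemma~\ref{AC.SI}: if $C$ is SI, then $C$ is strict, $D^-\psi$ is continuous, and $-D^-\psi$ is log-convex. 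Chaining these three gives the equivalence of all three statements, so the proof is essentially a one-paragraph bookkeeping argument once the lemmas are in hand.

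First I would state that by Lemma~\ref{AC.SI} we may assume $C$ is strict: if $C$ is non-strict it is neither SI nor MK-TP2 (as noted in the text, since then $C(u,v)=0$ for some $(u,v)\in(0,1)^2$), and a non-strict co-generator cannot have $-D^-\psi$ log-convex on all of $(0,\infty)$ because $D^-\psi$ vanishes on $[\varphi(0),\infty)$, so all three statements fail simultaneously; hence the equivalence holds trivially in that case. With $C$ strict, the Markov kernel has the clean form from Theorem~\ref{AC.MK}, namely $K_C(u,[0,v]) = D^-\psi(\varphi(u)+\varphi(v))/D^-\psi(\varphi(u))$ for $u\in(0,1)$, which is what both Lemma~\ref{AC.SI} and Lemma~\ref{AC.MKTP2} are built on.

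Then I would carry out the cycle explicitly. Assume $(c)$. By Lemma~\ref{AC.MKTP2}, $C$ is MK-TP2, giving $(a)$. Assume $(a)$. Reading off Figure~\ref{fig.Implications}, MK-TP2 implies SI, giving $(b)$. Assume $(b)$. By Lemma~\ref{AC.SI}, $-D^-\psi$ is log-convex, giving $(c)$. Hence $(a)$, $(b)$, $(c)$ are equivalent.

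The honest remark is that there is no real obstacle left at this point — all the work has been front-loaded into Lemmas~\ref{AC.Discont.Lemma}, \ref{AC.SI}, and \ref{AC.MKTP2}. The only thing worth double-checking is that the hypotheses match up across the lemmas: Lemma~\ref{AC.MKTP2} assumes log-convexity of $-D^-\psi$ without separately assuming strictness, but log-convexity forces $-D^-\psi>0$ everywhere on $(0,\infty)$ and hence $\varphi(0)=\infty$, so strictness comes for free and the Markov-kernel formula used in its proof is the strict one; and Lemma~\ref{AC.SI} delivers exactly the strictness plus continuity plus log-convexity needed to re-enter Lemma~\ref{AC.MKTP2}. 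So the main ``step'' is really just verifying that the three lemmas interlock cleanly, which they do.
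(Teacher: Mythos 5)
Your proposal is correct and follows exactly the paper's route: the paper also obtains the theorem by combining Lemma~\ref{AC.MKTP2} for $(c)\Rightarrow(a)$, the implication MK-TP2 $\Rightarrow$ SI from Figure~\ref{fig.Implications} for $(a)\Rightarrow(b)$, and Lemma~\ref{AC.SI} for $(b)\Rightarrow(c)$. Your extra check that the non-strict case makes all three statements fail simultaneously, and that log-convexity of $-D^-\psi$ forces strictness, is consistent with the paper's earlier reduction to strict generators.
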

\noindent
The previous result is remarkable since for Archimedean copulas the dependence properties SI and MK-TP2 are equivalent.

\begin{Remark}
According to \cite{caperaa1993spearman}, for an Archimedean copula with twice differentiable co-generator $\psi$, 
the following statements are equivalent:
\begin{enumerate} \itemsep=0mm
  \item[(a)] $C$ is d-TP2.
  \item[(b)] $\psi^{''}$ is log-convex.
\end{enumerate} 
\end{Remark}

In sum, for Archimedean copulas, Figure \ref{fig.Implications} reduces to
\begin{equation*}
  \textrm{d-TP2}
	\overset{(1)}{\Longrightarrow}
	\textrm{MK-TP2}
	\Longleftrightarrow
	\textrm{SI}
	\overset{(2)}{\Longrightarrow}
	\textrm{TP2}
	\Longleftrightarrow
	\textrm{LTD}
	\Longrightarrow
	\textrm{PQD}
\end{equation*}
Remark 2.13 in \cite{muller2005} provides an Archimedean copula that is MK-TP2 and SI, but not d-TP2, i.e., the reverse of (1) does not hold, in general.
An Archimedean copula that is LTD and TP2 but not MK-TP2 is given in \cite[Example 19]{spreeuw2013}:
The mapping $\psi: [0,\infty] \to \I$ given by
$\psi(x) := \big(x + \sqrt{1+x^2} \big)^{-1/10}$
is an Archimedean co-generator which is log-convex.
However, since $-\psi'$ is not log-convex, the reverse of (2) does not hold, in general.
%\bigskip\bigskip\bigskip
%Simple computation yields $\psi$ is log-convex and $\psi'(t) = \frac{-1}{10}\left(t + \sqrt{1+t^2} \right)^{-1,1} %\left( 1 +\frac{t}{\sqrt{1+t^2}} \right)$. Setting $x=0, y=1$ and $\lambda = 1/2$ yields
%\begin{align*}
%  \log(-\psi'(\lambda x + (1-\lambda)y)) > \lambda\log(-\psi'(x)) + (1-\lambda) \log(-\psi'(y))
%\end{align*}
%and therefore $-\psi'$ is not log-convex. 
%Hence Remark \ref{ArchimedeanEquivalentConditionTP2} and Theorem \ref{AC.Eqivalence} yield that the corresponding Archimedean copula is TP2 but not MK-TP2, i.e. the reverse direction of implication $(2)$ does not hold as well. 

An Archimedean co-generator $\psi$ is said to be \emph{completely monotone} if its restiction $\psi|_{(0,\infty)}$ has derivatives of all orders and satisfies 
$(-1)^n \psi^{(n)}(x) \geq 0 $ for all $x \in (0,\infty)$ and all $n \in \N$,
and it follows from \cite{niculescu2006convex} that completely monotone functions are log-convex.
If $\psi$ is completely monotone then its negative derivative is completely monotone as well implying that $(-1)^n \psi^{(n)}$ is log-convex for all $n \in \N$.
Therefore,
every Archimedean copula with completely monotone co-generator is d-TP2 and hence MK-TP2.

\begin{Example} (Gumbel copula) \label{GumbelMKTP2}\\
For $\alpha \in [1,\infty)$, the mapping $\psi_\alpha: [0,\infty] \to \I$ given by 
$$
  \psi_\alpha (x) 
	:= \exp\big(-x^{1/\alpha} \, \log(2) \big) 
$$
is a (normalized) Archimedean co-generator and the corresponding Archimedean copula is called Gumbel copula.
$\psi$ is completely monotone (see, e.g., \cite{nelsen2007introduction}) and hence d-TP2 as well as MK-TP2.
\end{Example}

Recall that Gumbel copulas are Archimedean and extreme value copulas (EVC) at the same time.
EVCs are subject of the next section.

Further examples of Archimedean copulas with completely monotone co-generator may be found in \cite[Chapter 4]{nelsen2007introduction}.

%%%%%%%%%%%%%%%%%%%%%%%%%%%%%%%%%%%%%%%%%%%%%%%%%%%%%%%%%%%%%%%%%%%%%%%%%%%%%%%%%%%%%%%%%%%%%%%%%%%%%%%%%%%%%%%%%%%%%%%%%%%%%%
%%%%%%%%%%%%%%%%%%%%%%%%%%%%%%%%%%%%%%%%%%%%%%%%%%%%%%%%%%%%%%%%%%%%%%%%%%%%%%%%%%%%%%%%%%%%%%%%%%%%%%%%%%%%%%%%%%%%%%%%%%%%%%
%%%%%%%%%%%%%%%%%%%%%%%%%%%%%%%%%%%%%%%%%%%%%%%%%%%%%%%%%%%%%%%%%%%%%%%%%%%%%%%%%%%%%%%%%%%%%%%%%%%%%%%%%%%%%%%%%%%%%%%%%%%%%%
\section{Extreme Value copulas}\label{Sect.EVC}

It is well-known that every extreme value copula (EVC) is TP2 (see, e.g., \cite{joe1997multivariate}) and 
SI (see, e.g., \cite{guillem2000structure}).
However, EVCs may fail to be MK-TP2, in general; see Example \ref{Thm.MO} below or \cite{guillem2000structure}.
In what follows we answer the question under which conditions on the Pickands dependence function the corresponding EVC is MK-TP2.

According to \cite{durante2016principles}, 
a copula $C$ is called \textit{extreme value copula} if one of the following two equivalent conditions is fulfilled 
(see also \cite{deHaan1977,GS2011,Pickands1981}):
\begin{enumerate}
\item There exists a copula $B \in\C$ such that for all $(u,v) \in \I^2$ we have
$$ C(u,v) = \lim_{n \to \infty} B^n(u^{\frac{1}{n}},v ^{\frac{1}{n}}) $$

\item There exists a convex function $A: \I \to \I$ fulfilling %$A(0)=1=A(1)$ and \linebreak 
$\max \lbrace 1-t,t\rbrace \leq A(t) \leq 1$ for all $t \in\I$ such that 
$$C(u,v) = \left(uv \right)^{A\left( \frac{\ln(u)}{\ln(uv)} \right)}$$
for all $(u,v) \in (0,1)^2$.
$A$ is called Pickands dependence function.
\end{enumerate}
Let $\mathcal{A}$ denote the class of all Pickands dependence function. 
Following \cite{sfx2021weak,trutschnig2016mass},
for $A \in \mathcal{A}$ we will let $D^+A$ denote the right-hand derivative of $A$ on $[0,1)$ and 
$D^-A$ the left-hand derivative of $A$ on $(0,1]$. 
Convexity of $A$ implies that it is differentiable outside a countable subset of $(0,1)$, 
i.e. the identity $D^+A(t) = D^-A(t)$ holds for $\lambda$-almost every $t \in (0,1)$. 
Furthermore $D^+A$ is non-decreasing and right-continuous. 
For more information on Pickands dependence functions and the approach via right-hand derivatives
we refer to  \cite{Caperaa1997,Ghoudi1998}.

In order to simplify notation,
for a Pickands dependence function $A$ we define the function $F_A: [0,1) \to \mathbb{R}$ by letting
$$
  F_A(t) := A(t) + (1-t)D^+A(t)
$$ 
and set $F_A(1) := 1$. 
Then, convexity of $A$ implies that $F_A$ is non-decreasing and non-negative (see, e.g., \cite[Lemma 5]{trutschnig2016mass}). 
Additionally, we consider the function $h:(0,1)^2 \to \mathbb{R}$ given by 
$h(u,v) := \log(u)/\log(uv)$ and recall that
$ u \mapsto h(u,v)$ is strictly decreasing for all $v \in (0,1)$, and that 
$ v \mapsto h(u,v)$ is strictly increasing for all $u \in (0,1)$.
%\begin{itemize}
%    \item 
%    \item ,
%    \item $(u,v) \mapsto h(u,v)$ is $2$-increasing for $u < v$ and
%    \item $(u,v) \mapsto h(u,v)$ is $2$-decreasing for $u > v$
%\end{itemize}
Moreover, for $t \in (0,1)$, 
we define the function $f_t: \I \to \I$ by $f_t(u) := u^{\frac{1-t}{t}}$,
and note that $f_t$ is increasing and convex whenever $t \in (0,1/2]$ and concave whenever $t \in [1/2,1) $.
Simple calculation yields $h(u,f_t(u)) = t$.

According to \cite[Lemma 3]{trutschnig2016mass} and using the above-introduced functions,
\begin{equation} \label{Eq.EVC.MK}
    K_A(u,[0,v]) = \begin{cases}
                        1 & \text{if } u \in \lbrace 0,1 \rbrace\\
                        \frac{C(u,v)}{u} F_A(h(u,v)) & \text{if } u,v \in (0,1) \\
                        v & \text{if } (u,v) \in (0,1) \times \lbrace 0,1 \rbrace
                    \end{cases}
\end{equation}
%\begin{align*}
%    K_A(u,[0,v]) = \begin{cases}
%                        1 & \text{if } u \in \lbrace 0,1 \rbrace,\\
%                        C(u,v) \left( D^+ A \left( \frac{\ln(u)}{\ln(u v)} \right) \frac{\ln(v)}{u\ln(u v)} + \frac{1}{u} A\left( %\frac{\ln(u)}{\ln(u v)} \right) \right) & \text{if } u,v \in (0,1), \\
%                        v & \text{if } (u,v) \in (0,1) \times \lbrace 0,1 \rbrace.
%                    \end{cases}
%\end{align*}
is a (version of the) Markov kernel of the extreme value copula $C$ with Pickands dependence function $A$.

\bigskip
In what follows we give sufficient and necessary conditions on $A$ for the corresponding EVC to be MK-TP2.
As we will show, the behaviour of $D^+A$ at point $0$ is crucial.
The first result is evident:

\begin{Lemma}\label{TheoremTrivialCase}
$D^+A(0) = 0$ if and only if $A \equiv 1$. In this case $C=\Pi$ is d-TP2 and hence MK-TP2.
\end{Lemma}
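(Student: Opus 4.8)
The plan is to prove both directions of the equivalence directly from the definition of $F_A$.

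First I would prove the easy direction: if $A \equiv 1$, then $D^+A(t) = 0$ for all $t \in [0,1)$, and in particular $D^+A(0) = 0$. Moreover, when $A \equiv 1$ the defining formula $C(u,v) = (uv)^{A(h(u,v))} = (uv)^1 = uv = \Pi(u,v)$ shows $C = \Pi$. Since the independence copula is d-TP2 (its density is constantly $1$, which trivially satisfies the TP2 inequality with equality), Figure \ref{fig.Implications} gives that $C$ is MK-TP2 as well.

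For the converse, suppose $D^+A(0) = 0$. Recall that $F_A(0) = A(0) + (1-0)D^+A(0) = A(0) + D^+A(0)$, and since $A(0) = 1$ (because $\max\{1-0,0\} = 1 \leq A(0) \leq 1$), the assumption gives $F_A(0) = 1$. Now I invoke the fact established just before the lemma (citing \cite[Lemma 5]{trutschnig2016mass}) that $F_A$ is non-decreasing and non-negative, together with the boundary value $F_A(1) = 1$. The key observation is that $F_A$ admits an upper bound: integrating $D^+A$ and using convexity one gets $F_A(t) \le 1$ for all $t$; alternatively, $F_A(t) = A(t) + (1-t)D^+A(t) \le A(t) + (1-t)\frac{1-A(t)}{1-t} = 1$ by the convexity estimate $D^+A(t) \le \frac{A(1)-A(t)}{1-t} = \frac{1-A(t)}{1-t}$. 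Hence $F_A$ is non-decreasing with $F_A(0) = 1$ and $F_A \le 1$ throughout, which forces $F_A \equiv 1$ on $[0,1)$. But $F_A \equiv 1$ means $(1-t)D^+A(t) = 1 - A(t)$ for all $t \in [0,1)$; writing $g(t) := 1 - A(t) \ge 0$ this reads $-(1-t)g'(t) = g(t)$ (interpreting $g'$ as the right-hand derivative), i.e. $\frac{d}{dt}\log g(t) = \frac{-1}{1-t}$ wherever $g > 0$, whose solutions are $g(t) = c(1-t)$; combined with $g(0) = 1-A(0) = 0$ this yields $c = 0$, so $g \equiv 0$ and $A \equiv 1$. (One must handle the possibility that $g$ vanishes on a subinterval and is positive elsewhere, but the ODE argument together with continuity and $g \ge 0$ rules this out.) Then the d-TP2 and MK-TP2 conclusions follow exactly as in the first direction.

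The only mildly delicate point is the final ODE-type step: deducing $A \equiv 1$ from the functional identity $(1-t)D^+A(t) = 1 - A(t)$ without assuming differentiability. This can be handled cleanly by noting that $F_A \equiv 1$ forces $t \mapsto A(t) + (1-t)D^+A(t)$ to be constant; since $A$ is convex and the "mass" interpretation of $F_A$ (the function governs the conditional distribution via \eqref{Eq.EVC.MK}) ties $F_A \equiv 1$ to the Markov kernel being $K_A(u,[0,v]) = \frac{C(u,v)}{u}$, one can instead argue that $F_A$ constant equal to its boundary value $1$ combined with $A(0) = A(1) = 1$ and convexity of $A$ simply pins down $A$ as the constant function, since a convex function on $[0,1]$ equal to $1$ at both endpoints and satisfying $A + (1-t)D^+A \equiv 1$ cannot dip below $1$ (it is $\le 1$ and $\ge \max\{1-t,t\}$, and the identity leaves no room). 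I expect this bookkeeping to be the main obstacle, but it is routine given the monotonicity and bounds on $F_A$ already recorded in the paper.
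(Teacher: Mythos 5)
Your proof is correct, but the paper offers no argument at all here (it declares the lemma ``evident''), and the route you take is considerably heavier than the one-line argument that makes it so. The direct argument: since $A$ is convex, $D^+A$ is non-decreasing, so $D^+A(0)=0$ forces $D^+A(t)\geq 0$ for all $t\in[0,1)$, hence $A$ is non-decreasing on $[0,1]$; combined with $A(0)=1$ and $A\leq 1$ this gives $1=A(0)\leq A(t)\leq 1$, i.e.\ $A\equiv 1$. Your detour through $F_A$ is sound --- $F_A(0)=1$, $F_A$ non-decreasing, and the convexity bound $D^+A(t)\leq\frac{1-A(t)}{1-t}$ giving $F_A\leq 1$ do force $F_A\equiv 1$ --- but the subsequent ODE-style step you flag as ``mildly delicate'' is exactly the kind of bookkeeping the direct argument avoids, and even within your own framework it can be closed more cleanly: $F_A\equiv 1$ gives $(1-t)D^+A(t)=1-A(t)\geq 0$, hence $D^+A\geq 0$, and you are back to the monotonicity argument above without any need to solve $D^+g(t)=-g(t)/(1-t)$ or worry about where $g$ vanishes. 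The remaining assertions (that $A\equiv 1$ yields $C=\Pi$, which is d-TP2 and hence MK-TP2 by Figure~\ref{fig.Implications}) are handled correctly and identically in spirit to the paper.
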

%\begin{proof}
%    If $D^+A(0) = 0$ then $A(t) = 1$ holds for all $t\in\I$. Hence the corresponding EVC is $\Pi$ which is MK-TP2.
%\end{proof}

Next we consider the class of Pickands dependence functions fulfilling $D^+A(0) \in (-1,0)$ 
and show that the corresponding EVCs fail to be MK-TP2.

\begin{Lemma}\label{NecessaryCondition}
If $D^+A(0) \in (-1,0)$ then the corresponding EVC is not MK-TP2.
\end{Lemma}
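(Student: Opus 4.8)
The plan is to exhibit an explicit rectangle on which the MK-TP2 inequality \eqref{MKTP2.Def} fails, exploiting the behaviour of the Markov kernel \eqref{Eq.EVC.MK} near the corner $u \to 0$. Write $m := D^+A(0) \in (-1,0)$. Since $A$ is convex with $A(0)=1$ and $D^+A$ is right-continuous and non-decreasing, for $t$ near $0$ we have $A(t) = 1 + mt + o(t)$ and $F_A(t) = A(t) + (1-t)D^+A(t) \to 1 + m > 0$ as $t \to 0^+$. The idea is that as $u \to 0$ with $v$ fixed, $h(u,v) = \log u/\log(uv) \to 1$, so $F_A(h(u,v)) \to F_A(1) = 1$ and $C(u,v)/u \to v^{A(1)} = v$; but the rate at which these limits are approached, controlled by $m$, is what breaks total positivity. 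More precisely, I would fix $v_1 < v_2$ in $(0,1)$ and compare the kernel along two small values $u_1 < u_2$, and show
$$
  \frac{K_A(u_1,[0,v_1]) \, K_A(u_2,[0,v_2])}{K_A(u_1,[0,v_2]) \, K_A(u_2,[0,v_1])} < 1
$$
for suitably chosen $u_1, u_2$ close to $0$.

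The key computation is an asymptotic expansion. Using \eqref{Eq.EVC.MK}, for $u,v \in (0,1)$ we have $K_A(u,[0,v]) = u^{A(h(u,v))-1}\,v^{A(h(u,v))}\,F_A(h(u,v))$. Set $s := \log u$ (large negative) and $r_j := \log v_j < 0$; then $h(u,v_j) = s/(s+r_j) = 1 - r_j/s + O(1/s^2)$, so $A(h(u,v_j)) = 1 + m(1 - r_j/s) + o(1/s) + \big(\text{lower-order from }A\big)$ — here I would need that the second-order behaviour of $A$ at $1$ is negligible compared to $1/s$, which holds because $A$ is differentiable at $1$ from the left with $D^-A(1) \le 1$; the correction to $F_A(h(u,v_j))$ is likewise $O(1/s)$. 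Taking logarithms, $\log K_A(u,[0,v_j]) = (A(h)-1)\log u + A(h)\log v_j + \log F_A(h)$, and after substituting the expansions the leading non-constant term in the log-likelihood-ratio statistic
$$
  \Delta(u_1,u_2) := \log K_A(u_1,[0,v_1]) + \log K_A(u_2,[0,v_2]) - \log K_A(u_1,[0,v_2]) - \log K_A(u_2,[0,v_1])
$$
turns out to be proportional to $(r_2 - r_1)\big(D^+A(0) - D^+A(1)\big)\big(1/s_1 - 1/s_2\big)$ up to sign, plus strictly smaller terms. Since $r_2 > r_1$ (i.e. $v_2 > v_1$), $D^+A(0) = m < D^-A(1)$ whenever $A \not\equiv 1$ on a neighbourhood — and in fact $m \in (-1,0)$ forces $A$ non-constant, so $D^+A(0) < D^-A(1)$ strictly by convexity unless $A$ is affine, a case one checks separately — and one can arrange the sign so that $\Delta < 0$ for $u_1, u_2$ small with a fixed ratio $u_1 = u_2^\kappa$, $\kappa > 1$. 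By Remark we need only worry about rectangles where $K_A(u_2,[0,v_1]) > 0$, which is automatic here since $C(u,v)>0$ throughout $(0,1)^2$ for any EVC.

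The main obstacle will be controlling the error terms in the expansion rigorously: one must rule out the possibility that higher-order curvature of $A$ at $t=1$ (or non-differentiability there, which convexity still permits on a measure-zero set — but $D^-A(1)$ exists as a one-sided limit) cancels the leading destabilising term. I expect the cleanest route is to avoid full Taylor expansions and instead pick the rectangle more cleverly: choose $v_2 = 1^-$ effectively by taking $v_2$ so that $h(u_2,v_2)$ stays bounded away from $1$, or — following the style of Lemma \ref{AC.Discont.Lemma} — take one pair of arguments landing where $D^+A$ is near its value at $0$ and the other where it is near a strictly larger value, turning the argument into a discrete two-point comparison of $F_A$ rather than a limiting statement. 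A secondary check is that the chosen $u_1,u_2,v_1,v_2$ genuinely satisfy $0 < u_1 \le u_2 < 1$ and $0 < v_1 \le v_2 < 1$; with $u_1 = u_2^\kappa$ and $u_2$ small this is immediate. Once the sign of the leading term is pinned down, the conclusion that $C$ fails \eqref{MKTP2.Def} — and hence is not MK-TP2 — is immediate.
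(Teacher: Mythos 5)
There is a genuine gap here, and it is structural rather than technical: you have localised the analysis at the wrong corner of the unit square. The hypothesis concerns $D^+A(0)$, i.e.\ the behaviour of $A$ at $t=0$; since $h(u,v)=\log(u)/\log(uv)$ tends to $1$ as $u\to 0$ and to $0$ as $u\to 1$, the value $D^+A(0)$ is felt by the Markov kernel only in the regime $u\to 1$. Your expansion takes $u_1,u_2\to 0$ with $v_1<v_2$ fixed, so all four arguments $h(u_i,v_j)$ tend to $1$ and the expansion can only involve the behaviour of $A$ near $t=1$ (through $D^-A(1)$ and higher-order terms); the quantity $m=D^+A(0)$ simply cannot appear in the leading term, and the claimed coefficient $D^+A(0)-D^+A(1)$ has no source in that computation. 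Worse, the approach is not merely incomplete but fails: for Tawn's model $A_\theta(t)=\theta t^2-\theta t+1$ with $\theta\in(0,1)$ one computes that the left-hand side of Inequality \eqref{EVC.suff.Cond.} equals $2\theta(1-t)\bigl(1-3t-\theta(1-t)^3\bigr)/F_A^2(t)\cdot$(positive factor), which is negative for $t$ near $1$, so $\log\circ F_A\circ h$ is $2$-increasing on rectangles near the corner $u\approx 0$; combined with $I_1\geq 1$ (every EVC is TP2) the MK-TP2 inequality \emph{holds} on every such rectangle. No choice of small $u_1\leq u_2$ with fixed $v_1\leq v_2$ can therefore produce a violation for this family, even though it satisfies the hypothesis.

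The mechanism that actually works is the opposite one: send $u_2\to 1$, so that $h(u_2,v)\to 0$ and $K_A(u_2,[0,v])\to v\,F_A(0)$ with $F_A(0)=1+D^+A(0)\in(0,1)$, and play this quasi-independence behaviour off against a fixed interior point $u_1$ with $v_1=u_1^{\beta_A}$, $v_2=u_1^{\alpha}$ anchored at the maximal minimiser $\beta_A$ of $\gamma\mapsto A\bigl(1/(1+\gamma)\bigr)$. The ratio $K_A(u_2,[0,v_2])/K_A(u_2,[0,v_1])\to u_1^{\alpha-\beta_A}>1$ must then be beaten by the reverse ratio at $u_1$, and this requires genuine quantitative input: the strict monotonicity of $g(\alpha)=(1+\alpha)A(1/(1+\alpha))-\alpha$, the strict inequality $F_A(1/(1+\beta_A))<F_A(1/(1+\alpha))$, and a choice of $u_1$ calibrated by these two quantities -- none of which your sketch supplies. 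There is also a separate case (when $D^+A$ has a discontinuity point) that must be handled first by a different argument. Your closing remark about switching to a ``discrete two-point comparison'' with one pair of arguments near $t=0$ points in the right direction, but as written the proof rests on an expansion that provably cannot detect the hypothesis.
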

\begin{proof}
To prove the result we proceed in two steps: 
first, we consider the case when $D^+A$ has at least one discontinuity point and in the second step we assume that $D^+A$ is continuous.

First, suppose there exists some $t_1 \in (0,1)$ such that $D^+A(t_1) \neq D^-A(t_1)$.
By assumption, $D^+A(0) \in (-1,0)$ which implies $F_A(0) = 1 + D^+A(0) \in (0,1]$. 
Since $F_A$ is non-decreasing and continuous outside of a countable subset of $(0,1)$ 
there further exists some $t_2 \in (0,t_1)$ such that $F_A(t) > 0$ for all $t \in [t_2,t_1)$ and $t \mapsto F_A(t)$ is continuous on $[t_2,t_1)$. 
Lemma \ref{LemmaCase2Discontinuous} hence implies that the corresponding EVC is not MK-TP2.

Suppose now that $D^+A$ and thus also $F_A$ are continuous. 
Notice that, since $h$ is continuous, the mapping 
$u \mapsto K_A(u,[0,v])$
is continuous and non-increasing for every $v \in (0,1)$ where the latter property follows from the fact that every EVC is stochastically increasing.\\
Define $\beta_A \in (0,\infty)$ and $g: [0,\beta_A] \to \R$ according to Lemma \ref{LemmaCase2Continuous}. 
Then, by Lemma \ref{LemmaCase2Continuous}, 
$g(\alpha)>g(\beta_A)$,
$F_A\big(1/(1+\beta_A)\big)<F_A\big(1/(1+\alpha)\big)$  
and the inequality 
$$
  \gamma_\alpha
	:= \frac{\log\left(\frac{F_A\big(\tfrac{1}{1+\beta_A}\big)}{F_A\big(\tfrac{1}{1+\alpha}\big)}\right)}{g(\alpha) -g(\beta_A)} < 0
$$
holds for all $\alpha \in(0,\beta_A)$.
Now, fix $\alpha \in(0,\beta_A)$ and choose
$ u := \exp (\gamma_\alpha/2) \in (\exp (\gamma_\alpha),1) $
and set 
$u_1 := u$,
$v_1 := u^{\beta_A}$, 
$v_2 := u^\alpha$ and 
$u_{2,n}:=1-1/n$ for $n \in \mathbb{N}$ with $u < 1-1/n$ (as illustrated in Figure \ref{Fig.EVC.notMK1}). 
\begin{figure}[h]
\begin{center}
  \begin{tikzpicture}[scale=4]
	%\draw[step=.1cm, gray, very thin] (-2,-2) grid (2,2); 
	% Achsen
	\draw[->,line width=1] (-0.2,0) -- (1.2,0);
	\draw[->,line width=1] (0,-0.2) -- (0,1.2);
	% Einheitsquadrat
	\draw[-,line width=1] (0,1) -- (1,1);
	\draw[-,line width=1] (1,0) -- (1,1);
	% x-Axis label
	\draw[-,line width=1] (1,0) -- (1,-0.02);
	\node[below=1pt of {(1.05,0)}, scale= 0.75, outer sep=2pt,fill=white] {$1$};
	% y-Axis label
	\draw[-,line width=1] (0,1) -- (-0.02,1);
	\node[left=1pt of {(-0.02,1)}, scale= 0.75, outer sep=2pt,fill=white] {$1$};
	% f_t concave
	\draw[scale=1, domain=0:1, smooth, variable=\x, blue] plot ({\x}, {\x^0.5});
	% f_t convex
	\draw[scale=1, domain=0:1, smooth, variable=\x, blue] plot ({\x}, {\x^2});
	% Choosen Points
	\draw[-,line width=1] (0.6,0) -- (0.6,-0.02);
	\node[below=1pt of {(0.6,-0.02)}, scale= 0.75, outer sep=2pt,fill=white] {$u$};
	\draw[-,line width=1] (0.95,0) -- (0.95,-0.02);
	\node[below=1pt of {(0.95,-0.02)}, scale= 0.75, outer sep=2pt,fill=white] {$u_{2,n}$};
	\draw[-,line width=1] (0,0.77) -- (-0.02,0.77);
	\node[left=1pt of {(-0.02,0.77)}, scale= 0.75, outer sep=2pt,fill=white] {$v_2$};
	\draw[-,line width=1] (0,0.36) -- (-0.02,0.36);
	\node[left=1pt of {(-0.02,0.36)}, scale= 0.75, outer sep=2pt,fill=white] {$v_1$};
	% dashed lines
	\draw[-,line width=1, dashed] (0,0.77) -- (0.95,0.77);
	\draw[-,line width=1, dashed] (0,0.36) -- (0.95,0.36);
	\draw[-,line width=1, dashed] (0.6,0) -- (0.6,0.77);
	\draw[-,line width=1, dashed] (0.95,0) -- (0.95,0.77);
	% Points
	\node at (0.6,0.36)[circle,fill,inner sep=1.5pt, red]{};
	\node at (0.95,0.36)[circle,fill,inner sep=1.5pt, red]{};
	\node at (0.6,0.77)[circle,fill,inner sep=1.5pt, red]{};
	\node at (0.95,0.77)[circle,fill,inner sep=1.5pt, red]{};
	% text for contour lines
	\node[above=2pt of {(0.55,0.75)}, scale= 0.8, outer sep=2pt,fill=white] {$f_{1/(1+\alpha)}$};
	\node[right=2pt of {(0.6,0.25)}, scale= 0.8, outer sep=2pt,fill=white] {$f_{1/(1+\beta_A)}$};
	\end{tikzpicture}
	\caption{Blue lines are the contour lines of $f_{1/(1+\alpha)}$ and $f_{1/(1+\beta_A)}$ with $\alpha < 1 < \beta_A$.}
  \label{Fig.EVC.notMK1}        
\end{center}				
\end{figure}
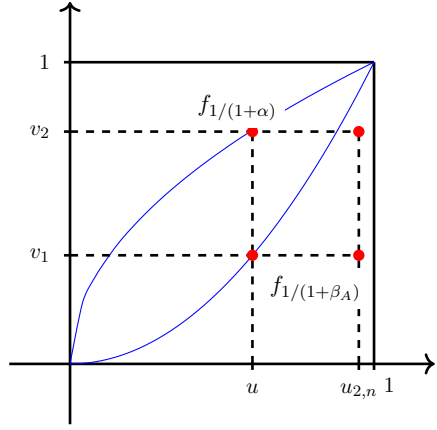
Then
$$
  \lim_{n \to \infty} K_A(u_{2,n},[0,v]) 
	= \lim_{n \to \infty} \frac{C(u_{2,n},v)}{u_{2,n}} F_A(h(u_{2,n},v)) 
	= v F_A(0)
	> 0
$$
for all $v \in (0,1)$ and hence 
$$
  \lim_{n \to \infty} \frac{K_A(u_{2,n},[0,v_2])}{K_A(u_{2,n},[0,v_1])}
	= \frac{u^{\alpha} \, F_A(0)}{u^{\beta_A} \, F_A(0)}
	= u^{\alpha-\beta_A}
	= \exp \big( \tfrac{\gamma_\alpha}{2} (\alpha-\beta_A) \big)
	> 1
$$
Continuity then implies the existence of some $n^\ast \in \mathbb{N}$ with $n^\ast \geq n$ such that
$$
  \frac{K_A(u_{2,n^\ast},[0,v_2])}{K_A(u_{2,n^\ast},[0,v_1])}
	< \underbrace{\left(\frac{F_A\left(\tfrac{1}{1+\alpha}\right)}{F_A\left(\tfrac{1}{1+\beta_A}\right)}\right)^{1/2}}_{>1} u^{\alpha-\beta_A}
$$
%where, by Lemma \ref{LemmaCase2Continuous}, $g(\alpha) - g(\beta_A) > 0$.
Moreover, we have
$$
  K_A(u_{1},[0,v_1])
	%= \frac{C(u_1,v_1)}{u_1}F_A(h(u_1,v_1))
	= \frac{C(u,u^{\beta_A})}{u} F_A\left(\tfrac{1}{1+\beta_A}\right)
	= u^{(1+\beta_A) A\big(\tfrac{1}{1+\beta_A}\big) - 1} F_A\left(\tfrac{1}{1+\beta_A}\right)
	> 0
$$
and
$$
  K_A(u_1,[0,v_2])
	= \frac{C(u,u^\alpha)}{u} F_A\left(\tfrac{1}{1+\alpha}\right)
	= u^{(1+\alpha)A\left(\frac{1}{1+\alpha}\right) - 1} F_A\left(\tfrac{1}{1+\alpha}\right)
	> 0
$$
Altogether, we finally obtain
\begin{eqnarray*}
  \frac{K_A(u_1,[0,v_1]) \, K_A(u_{2,n^\ast},[0,v_2])}{K_A(u_1,[0,v_2]) \, K_A(u_{2,n^\ast},[0,v_1])}
	& = & \frac{K_A(u_{2,n^\ast},[0,v_2])}{K_A(u_{2,n^\ast},[0,v_1])} \; 
				\frac{u^{(1+\beta_A) A\big(\tfrac{1}{1+\beta_A}\big) - 1} F_A\left(\tfrac{1}{1+\beta_A}\right)}
             {u^{(1+\alpha)A\left(\frac{1}{1+\alpha}\right) - 1} F_A\left(\tfrac{1}{1+\alpha}\right)}
	\\
	& < & \left(\frac{F_A\left(\tfrac{1}{1+\alpha}\right)}{F_A\left(\tfrac{1}{1+\beta_A}\right)}\right)^{1/2} 
	      u^{\alpha-\beta_A} \;
				\frac{u^{g(\beta_A)+\beta_A}  F_A\left(\tfrac{1}{1+\beta_A}\right)}
             {u^{g(\alpha)+\alpha} F_A\left(\tfrac{1}{1+\alpha}\right)}
	\\
	& = & \left(\frac{F_A\left(\tfrac{1}{1+\alpha}\right)}{F_A\left(\tfrac{1}{1+\beta_A}\right)}\right)^{1/2} \; 
							u^{g(\beta_A)-g(\alpha)} \;  
							\frac{F_A\left(\tfrac{1}{1+\beta_A}\right)}{F_A\left(\tfrac{1}{1+\alpha}\right)}
	\\
	& = & \left(\frac{F_A\left(\tfrac{1}{1+\alpha}\right)}{F_A\left(\tfrac{1}{1+\beta_A}\right)}\right)^{1/2} \; 
	      \left(\frac{F_A\left(\tfrac{1}{1+\alpha}\right)}{F_A\left(\tfrac{1}{1+\beta_A}\right)}\right)^{1/2} \; 
				\left(\frac{F_A\left(\tfrac{1}{1+\beta_A}\right)}{F_A\left(\tfrac{1}{1+\alpha}\right)}\right)
  \\
	& = & 1
\end{eqnarray*}
Thus, the corresponding EVC is not MK-TP2. 
This proves the result.
\end{proof}

%\noindent
%05.03.-------------------------------------------------------------------------------------------------

%For this purpose, we need the notion of \textit{logarithmic convex} and \textit{completely monotonic} functions (see \cite{niculescu2006convex}):
%\begin{enumerate} \itemsep=0mm
%\item[-] 
%log-convex (log-concave) if $\log f$ is convex (concave). 

%\item[-]
%completely monotonic if $f$ has derivatives of all orders and satisfies $(-1)^n f^{(n)}(x) \geq 0 $ for all $x \in (0,\infty)$ and all $n \in \N$.
%\end{enumerate}
%Note that a function $f$ is log-convex if and only if $\frac{1}{f}$ is log-concave. 
%It is well-known (see \cite{niculescu2006convex}) that completely monotonic functions are log-convex.

%\bigskip
\begin{Example}\label{RemarkATheta}
In \cite{guillem2000structure} the authors mentioned that the EVC $C_\theta$ occuring in Tawn's model \cite{tawn1988} and generated by the Pickands dependence function 
$A_\theta(t) := \theta t^2 - \theta t + 1$, $\theta \in \I$, 
is MK-TP2 for $\theta = 1$ but fails to be MK-TP2 for $\theta = 1/5$. 
\\
Due to Lemma \ref{TheoremTrivialCase} and Lemma \ref{NecessaryCondition} we are now in the position to finally name exactly those EVCs in that class that are MK-TP2:
Since $D^+(0) = -\theta$, $C_\theta$ is MK-TP2 if and only if $\theta \in \{0,1\}$.
\end{Example}

Due to Lemma \ref{TheoremTrivialCase} and Lemma \ref{NecessaryCondition},
it remains to discuss only those EVCs for which the corresponding Pickands dependence function $A$ satisfies $D^+A(0) = -1$ 
(or, equivalently, $F_A(0)=0$).
We first focus on how many discontinuity points can occur in this setting:

\begin{Lemma} \label{Lemma.EVC.2DP}
Suppose that $D^+A(0) = -1$ and that $D^+A$ has at least two discontinuity points. Then the corresponding EVC is not MK-TP2.
\end{Lemma}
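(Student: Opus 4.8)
The statement concerns EVCs with $D^+A(0)=-1$ (equivalently $F_A(0)=0$) and at least two discontinuity points of $D^+A$. The strategy is to reuse the discontinuity-based construction already developed for Lemma \ref{NecessaryCondition} (its first step) and for Lemma \ref{LemmaCase2Discontinuous}, but now to exploit the presence of \emph{two} jumps rather than one. The point is that when $F_A(0)=0$, the single-jump argument from Lemma \ref{NecessaryCondition} breaks down precisely because the limiting Markov-kernel values along $u_{2,n}\to 1$ collapse ($v F_A(0)=0$), so the ratio comparison there is no longer available; with two discontinuity points we instead place \emph{all four} of the relevant points near a pair of nearby jumps of $D^+A$ (hence of $F_A$), and play the jump on the ``$u$'' side against the jump on the ``$v$'' side.

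\textbf{Key steps.} First I would record the order structure: since $D^+A$ is non-decreasing and right-continuous with at least two discontinuities, pick $0<s_1<s_2<1$ with $F_A(s_i^-)<F_A(s_i)$ for $i=1,2$, and fix $\delta>0$ small enough that $F_A$ is continuous and strictly positive on the punctured neighbourhoods $(s_i-\delta,s_i)$ (possible because $F_A(0)=0$ forces $F_A>0$ only on $(0,1]$, but $s_1>0$, and $F_A$ has only countably many discontinuities). Second, I would choose the four points $u_1<u_{2}$, $v_1<v_2$ so that the four ``angles'' $h(u_i,v_j)$ straddle the two jumps: concretely, using $h(u,f_t(u))=t$ and the monotonicity of $h$ in each argument, arrange that $h(u_1,v_1)$ and $h(u_2,v_1)$ lie on opposite sides of $s_1$ while $h(u_1,v_2)$ and $h(u_2,v_2)$ lie on opposite sides of $s_2$ (or some such configuration dictated by which combination of jumps makes the product inequality fail). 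Third, plug into the Markov-kernel formula \eqref{Eq.EVC.MK},
$$
  K_A(u,[0,v]) = \frac{C(u,v)}{u}\,F_A\big(h(u,v)\big)
             = u^{(1+\beta)A(h(u,v))-1}\,F_A\big(h(u,v)\big)
$$
when $v=u^{\beta}$, and compute the MK-TP2 ratio
$$
  \frac{K_A(u_1,[0,v_1])\,K_A(u_2,[0,v_2])}{K_A(u_1,[0,v_2])\,K_A(u_2,[0,v_1])}.
$$
The power-of-$u$ factors are continuous in all the parameters and the $A$-values appearing in the exponents are continuous (the jumps are only in $D^+A$, not $A$), so these factors tend to a finite nonzero limit as the four points are pushed to a common cluster point determined by $s_1,s_2$; the only terms that do \emph{not} converge to that common value are the four $F_A$-values, whose product ratio is forced to be $<1$ (or $>1$ in the reversed arrangement, contradicting SI) by the two strict jumps. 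Fourth, a limiting/continuity argument then produces an explicit admissible rectangle violating \eqref{MKTP2.Def}.

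\textbf{Main obstacle.} The delicate part is the bookkeeping of \emph{which} configuration of the two jumps actually yields a violation: with two independent jumps of $F_A$ one must check that the combinatorics of signs in
$$
  \log F_A(h(u_1,v_1)) + \log F_A(h(u_2,v_2)) - \log F_A(h(u_1,v_2)) - \log F_A(h(u_2,v_1))
$$
can be made negative by a legitimate choice $u_1\le u_2$, $v_1\le v_2$ — i.e., one needs the ``cross'' pair $(u_1,v_2),(u_2,v_1)$ to pick up the larger $F_A$-values while the ``diagonal'' pair picks up the smaller ones, which is exactly the opposite of what $2$-monotonicity of $h$ wants, so the jumps must be placed to overpower the smooth part. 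Controlling the competing power-of-$u$ factor (it must not swamp the $F_A$ ratio) is the reason the construction drives $u\to 1$ or $u\to 0$ along a carefully chosen curve, mirroring the scaling trick in Lemma \ref{NecessaryCondition}; I expect the cleanest route is to reduce, after fixing the jump locations, to the one-jump machinery of Lemma \ref{LemmaCase2Discontinuous} applied twice (once per jump, on the two different $v$-levels $v_1,v_2$), and then to combine the two resulting strict inequalities multiplicatively.
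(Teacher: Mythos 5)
Your proposal does not arrive at a working argument, and the one concrete step you do commit to contains an error. You claim one can choose $\delta>0$ so that $F_A$ is strictly positive on $(s_i-\delta,s_i)$ for \emph{both} discontinuity points $s_1<s_2$; this fails for the first one. Under the standing hypothesis $D^+A(0)=-1$ the function $F_A$ may vanish identically on $[0,s_1)$ (take $A(t)=1-t$ up to $s_1$), and then any rectangle whose smallest angle $h(u_2,v_1)$ lies below $s_1$ has $K_A(u_2,[0,v_1])=0$ and is exempt from checking by Remark \ref{MKTP2.Def.2}. This undercuts the ``straddle both jumps'' configuration you describe, and in any case you never resolve its sign bookkeeping: you explicitly leave open which placement of the four angles yields a violation, and the assertion that the two jumps ``force'' the $F_A$-ratio below $1$ is unjustified, since monotonicity of $F_A$ together with the ordering $h(u_2,v_1)\leq h(u_1,v_1),h(u_2,v_2)\leq h(u_1,v_2)$ pushes that ratio the wrong way unless all four angles are clustered at a single jump.

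The paper's proof is a two-line reduction that uses the two jumps asymmetrically. Choose the discontinuity points $t_1<t_2$ of $D^+A$ so that $D^+A$, and hence $F_A$, is continuous on $[t_1,t_2)$. The jump of $F_A$ at $t_1$, together with $F_A\geq 0$ and monotonicity, gives $F_A(t)\geq F_A(t_1)>0$ for all $t\in[t_1,t_2)$. Thus the \emph{second} jump $t_2$ satisfies exactly the hypotheses of Lemma \ref{LemmaCase2Discontinuous} with $t_l=t_1$ and $t_r=t_2$, and that lemma, applied once (not twice, and with no multiplicative combination of inequalities), produces the violating rectangle: all four angles lie in $[t_1,t_2]$ and only $h(u_1,v_2)=t_2$ catches the jump. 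In short, the first discontinuity is needed solely to supply the positivity of $F_A$ just below the second; it plays no role in the rectangle itself. To repair your write-up you would need to replace the two-jump construction by this observation and then invoke Lemma \ref{LemmaCase2Discontinuous} exactly as stated.
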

\begin{proof}
Denote by $t_1$ and $t_2$ two discontinuity points of $D^+A$ such that $t \mapsto D^+A(t)$ is continuous on $[t_1,t_2)$. 
Then $t_1$ and $t_2$ are discontinuity points of $F_A$,
$F_A(t) > 0$ for all $t\in[t_1,t_2)$
and the mapping $t \mapsto F_A(t)$ is continuous on $[t_1,t_2)$. 
The assertion now follows from Lemma \ref{LemmaCase2Discontinuous}. 
\end{proof}

The previous lemma states that the Pickands dependence function of an EVC that is MK-TP2 can have at most one discontinuity point.
We can even make the statement more precise for this case.

%\begin{Lemma}\label{LemmaFAConstantNotMKTP2}
%Suppose that $D^+A(0) = -1$ and that there exist $t_1, t_2 \in \I$ with $t_1 < t_2$, 
%$a \in (-1,1)$ and $b \in (0,1)$ such that $A(t) = a t + b$ for all $t\in(t_1, t_2)$.
%%and $c\in(0,1)$ such that $F_A(t) = c$ on $[t_1,t_2]$. 
%Then the corresponding EVC is not MK-TP2.
%\end{Lemma}

%Now the question arises if a Pickands dependence function has exactly one discontinuity point, then is the corresponding EVC always MK-TP2? In order to answer this question we need another Lemma. The following Lemma is similar to Lemma \ref{LemmaCase2Discontinuous} since it again states a structure of $F_A$ for which the corresponding EVC is not MK-TP2. 

%In order to show this result we need some preparation. Since $K_A(u,[0,v]) = \frac{C(u,v)}{u} F_A(h(u,v))$ for $u,v\in(0,1)$ and $(u,v) \mapsto \frac{C(u,v)}{u}$ is already TP2 we focus on analysing whether $F_A$ is TP2.
%\bigskip

\begin{Lemma} \label{Lemma.EVC.1DP}
Suppose that $D^+A(0) = -1$ and that $D^+A$ has exactly one discontinuity point at $t^\ast \in (0,1)$. 
If the corresponding EVC is MK-TP2 then $A$ fulfills
$A(t) = 1-t$
for all $t \in [0,t^\ast]$. In this case, $t^\ast \leq 1/2$.
\end{Lemma}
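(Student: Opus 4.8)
The plan is to argue by contradiction and reduce to a situation already excluded by Lemma \ref{LemmaCase2Discontinuous}, namely the presence of a discontinuity of $D^+A$ sitting at the right end of an interval on which $F_A$ is positive and continuous. Assume the corresponding EVC is MK-TP2. Since $A$ is convex with $A(0)=1$ and coincides with the primitive of its right-hand derivative, the desired conclusion $A(t)=1-t$ for all $t\in[0,t^\ast]$ is equivalent to $D^+A(t)=-1$ for all $t\in[0,t^\ast)$, so I would aim to establish the latter. Suppose, to the contrary, that $D^+A$ is not constantly equal to $-1$ on $[0,t^\ast)$.

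First I would exploit monotonicity of $D^+A$. Recall $D^+A$ is non-decreasing with $D^+A(0)=-1$, hence $D^+A\ge -1$ everywhere; put $s:=\sup\{t\in[0,t^\ast):D^+A(t)=-1\}$, so that $\{t\in[0,t^\ast):D^+A(t)=-1\}$ is an interval of the form $[0,s)$ or $[0,s]$. The contrary assumption forces $s<t^\ast$, and monotonicity then yields $D^+A(t)>-1$ for every $t\in(s,t^\ast)$, whence
$$F_A(t)=A(t)+(1-t)\,D^+A(t)>A(t)-(1-t)\ge 0\qquad\text{for all }t\in(s,t^\ast),$$
using $A(t)\ge 1-t$. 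Because $t^\ast$ is the only discontinuity point of $D^+A$, the function $F_A$ is continuous on $[0,t^\ast)$, while it is discontinuous at $t^\ast$ (here $A$ is continuous and $1-t^\ast>0$). Fixing any $\sigma\in(s,t^\ast)$, I would then have a discontinuity point $t^\ast$ of $D^+A$ together with an interval $[\sigma,t^\ast)$ on which $F_A$ is strictly positive and continuous; Lemma \ref{LemmaCase2Discontinuous} gives that the EVC is not MK-TP2, contradicting the assumption. Hence $D^+A\equiv -1$ on $[0,t^\ast)$ and therefore $A(t)=1-t$ on $[0,t^\ast]$.

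Finally, for the bound $t^\ast\le 1/2$: if $t^\ast>1/2$, pick $t_0\in(1/2,t^\ast)$; then $A(t_0)=1-t_0<1/2$, whereas $A(t_0)\ge\max\{1-t_0,t_0\}=t_0>1/2$, a contradiction, so $t^\ast\le 1/2$. The step I expect to require the most care — though it is not deep — is the propagation of positivity of $F_A$: one must observe that $D^+A$ may leave the value $-1$ already at $0$ (so possibly $s=0$), and that once it exceeds $-1$ it stays above $-1$ by monotonicity, so that $F_A$ is genuinely positive on the entire interval $(s,t^\ast)$ abutting the discontinuity of $D^+A$ at $t^\ast$; everything quantitative is then outsourced to Lemma \ref{LemmaCase2Discontinuous}.
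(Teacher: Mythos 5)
Your proof is correct and follows essentially the same route as the paper: the paper's (very terse) argument likewise applies Lemma \ref{LemmaCase2Discontinuous} at the discontinuity point $t^\ast$ to conclude that $F_A$ must vanish on $[0,t^\ast)$, which forces $A(t)=1-t$ there and hence $t^\ast\le 1/2$. You have simply filled in the details the paper leaves implicit (that $D^+A>-1$ on an interval abutting $t^\ast$ would make $F_A$ strictly positive and continuous there, putting you exactly in the situation excluded by that lemma), and these details all check out.
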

\begin{proof}
Assume that the EVC is MK-TP2.
By Lemma \ref{LemmaCase2Discontinuous}, 
we immediately obtain $F_A(t) = 0$ for all $t \in [0,t^\ast)$ and hence $A(t) = 1-t$ for all $t \in [0,t^\ast]$.
Thus, $t^\ast \leq 1/2$.
This proves the assertion.
\end{proof}

Although quite simple, Lemma \ref{Lemma.EVC.1DP} is crucial, 
as from now on we can restrict ourselves to only those $D^+A$ 
for which there exists some $t^\ast \in [0,1/2]$ such that 
$D^+A(t) = -1$ for all $t \in [0,t^\ast)$ and being continuous on $[t^\ast,1]$.
This is equivalent to considering only those $F_A$ for which there exists some $t^\ast \in [0,1/2]$ such that 
$F_A(t) = 0$ for all $t \in [0,t^\ast)$ and being continuous on $[t^\ast,1]$.
%According to inequality (\ref{MKTP2.Def}), for proving MK-TP2, it thus remains to consider only those rectangles
%$0 < u_1 \leq u_2 < 1$ and $0 < v_1 \leq v_2 < 1$ for which $h(u_2,v_1) \geq t^\ast$ (since otherwise $K_A(u_2,[0,v_1])=0$).
\\
According to inequality (\ref{MKTP2.Def}) and Remark \ref{MKTP2.Def.2}, for proving MK-TP2, it thus remains to consider only those rectangles
$0 < u_1 \leq u_2 < 1$ and $0 < v_1 \leq v_2 < 1$ for which $h(u_2,v_1) \geq t^\ast$ 
(since otherwise $K_A(u_2,[0,v_1])=0$);
setting $f:= \log \circ F_A$
the copula $C$ hence is MK-TP2 if and only if 
\begin{equation}\label{EVC.Ineq.Final}
  1 
	\leq \underbrace{\left( \frac{C(u_1,v_1) \, C(u_2,v_2)}{C(u_1,v_2) \, C(u_2,v_1)}\right)}_{=:I_1}
						\cdot \underbrace{\left( \frac{F_A(h(u_1,v_1)) \, F_A(h(u_2,v_2))}{F_A(h(u_1,v_2)) \, F_A(h(u_2,v_1))} \right)}_{=:I_2}
\end{equation}
for all those rectangles
$0 < u_1 \leq u_2 < 1$ and $0 < v_1 \leq v_2 < 1$ for which $h(u_2,v_1) \geq t^\ast$. 
Since every EVC is TP2, we obvioulsy have $I_1 \geq 1$, and $I_2 \geq 1$ if and only if $F_A \circ h$ is TP2 
(or equivalently, $\log \circ F_A \circ h$ is $2$-increasing) on $h^{-1} \big([t^\ast,1]\big)$.

At this point we summarize our findings obtained so far:

\begin{Theorem} \label{Theorem.EVC.MKTP2}
Let $C$ be an EVC with Pickands dependence function $A$.
\begin{enumerate} \itemsep=0mm
\item 
If $D^+A(0) = 0$, then $C$ is MK-TP2.

\item
If $D^+A(0) \in (-1,0)$, then $C$ is not MK-TP2.

\item
If $D^+A(0) = -1$ and 
\begin{enumerate}\itemsep=0mm
\item[(i)] $D^+A$ has at least two discontinuity points, then $C$ is not MK-TP2.

\item[(ii)] $D^+A$ has exactly one discontinuity point $t^\ast \in (0,1)$ such that $D^+A(t) > -1$ for some $t \in (0,t^\ast)$, then $C$ is not MK-TP2.

\item[(iii)] there exists some $t^\ast \in [0,1/2]$ such that $D^+A(t) = -1$ for all $t \in [0,t^\ast)$, $D^+A$ is continuous on $[t^\ast,1]$ and $F_A \circ h$ is TP2 on $h^{-1} \big([t^\ast,1]\big)$,
then $C$ is MK-TP2.
\end{enumerate}
\end{enumerate}
\end{Theorem}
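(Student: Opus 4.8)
The plan is to assemble the theorem from the lemmas already established in this section, treating each of its five cases separately; essentially nothing new needs to be proved, only correctly quoted and combined.

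For part 1, I would simply invoke Lemma~\ref{TheoremTrivialCase}: the hypothesis $D^+A(0)=0$ forces $A\equiv 1$, hence $C=\Pi$, which is d-TP2 and therefore MK-TP2. Part 2 is nothing but a restatement of Lemma~\ref{NecessaryCondition}, and part 3(i) is nothing but a restatement of Lemma~\ref{Lemma.EVC.2DP}. For part 3(ii) I would argue by contraposition: assume $D^+A(0)=-1$, that $D^+A$ has exactly one discontinuity point $t^\ast\in(0,1)$, and that $C$ is MK-TP2. Then Lemma~\ref{Lemma.EVC.1DP} yields $A(t)=1-t$ for all $t\in[0,t^\ast]$, and hence $D^+A(t)=-1$ for every $t\in(0,t^\ast)$, contradicting the standing assumption that $D^+A(t)>-1$ for some $t\in(0,t^\ast)$. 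Therefore $C$ cannot be MK-TP2.

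For part 3(iii) I would recall the reduction carried out in the text preceding the theorem. By Lemma~\ref{Lemma.EVC.1DP} we may restrict attention to Pickands dependence functions for which there is $t^\ast\in[0,1/2]$ with $F_A$ vanishing on $[0,t^\ast)$ and continuous on $[t^\ast,1]$. Since every EVC is SI, Remark~\ref{MKTP2.Def.2} tells us that the defining inequality~\eqref{MKTP2.Def} only needs to be checked on rectangles $0<u_1\le u_2<1$, $0<v_1\le v_2<1$ with $K_A(u_2,[0,v_1])>0$, i.e.\ with $h(u_2,v_1)\ge t^\ast$; on such rectangles the Markov-kernel formula~\eqref{Eq.EVC.MK} turns~\eqref{MKTP2.Def} into the equivalent inequality~\eqref{EVC.Ineq.Final}. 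There the factor $I_1\ge 1$ because every EVC is TP2, and $I_2\ge 1$ precisely because $F_A\circ h$ is assumed TP2 on $h^{-1}\big([t^\ast,1]\big)$; multiplying gives $I_1 I_2\ge 1$, so $C$ is MK-TP2.

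The only genuinely delicate point is the domain bookkeeping in part 3(iii): one has to confirm that whenever $h(u_2,v_1)\ge t^\ast$ the remaining three arguments $h(u_1,v_1)$, $h(u_1,v_2)$, $h(u_2,v_2)$ also lie in $[t^\ast,1]$, which follows from the monotonicity of $h$ in each coordinate ($u\mapsto h(u,v)$ decreasing, $v\mapsto h(u,v)$ increasing), so that all four evaluations of $F_A$ entering $I_2$ are strictly positive and the split of~\eqref{MKTP2.Def} into $I_1$ and $I_2$ is legitimate; the boundary cases $u_i\in\{0,1\}$ or $v_i\in\{0,1\}$ are covered by the other branches of~\eqref{Eq.EVC.MK} and cause no difficulty. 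Apart from that, the proof is just a matter of citing Lemmas~\ref{TheoremTrivialCase}, \ref{NecessaryCondition}, \ref{Lemma.EVC.2DP}, \ref{Lemma.EVC.1DP} and the discussion around~\eqref{EVC.Ineq.Final}.
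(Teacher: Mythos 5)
Your proposal is correct and coincides with the paper's own treatment: the theorem is explicitly introduced there as a summary of Lemmas \ref{TheoremTrivialCase}, \ref{NecessaryCondition}, \ref{Lemma.EVC.2DP}, \ref{Lemma.EVC.1DP} and the reduction to inequality \eqref{EVC.Ineq.Final}, with no further argument given. Your added care about the domain bookkeeping in part 3(iii) (that $h(u_2,v_1)\ge t^\ast$ together with the monotonicity of $h$ and of $F_A$ forces all four evaluations of $F_A$ in $I_2$ to be positive) is a detail the paper leaves implicit, and it is handled correctly.
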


For the remainder of this section we focus on part 3.(iii) in Theorem \ref{Theorem.EVC.MKTP2}
and examine conditions under which the function $F_A \circ h$ is TP2.
We start with the following lemma that relates the TP2 property of $F_A \circ h$ to log-concavity of $F_A$.

\begin{Lemma} \label{EVC.logconcave} 
Suppose that $D^+A(0) = -1$, there exists some $t^\ast \in [0,1/2]$ such that $D^+A(t) = -1$ for all $t \in [0,t^\ast)$ and $D^+A$ is continuous on $[t^\ast,1]$.
\begin{enumerate} \itemsep=0mm
\item 
If $F_A \circ h$ is TP2 on $h^{-1} \big(\big[t^\ast,1/2\big]\big)$, then $F_A$ is log-concave on $\big[t^\ast,1/2\big]$. 

\item 
If $F_A$ is log-concave on $\big[1/2,1\big)$, 
then $F_A \circ h$ is TP2 on $h^{-1} \big(\big[1/2,1\big)\big)$.
\end{enumerate}
\end{Lemma}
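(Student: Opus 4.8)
The plan is to transport the two‑dimensional TP2 condition into a one‑dimensional concavity statement by a logarithmic change of variables along the level curves of $h$, and then to read off (log‑)concavity of $F_A$ via the elementary composition rules for convex and concave functions. To set this up, for a rectangle $[u_1,u_2]\times[v_1,v_2]\subseteq(0,1)^2$ I would put $x_{ij}:=\log(v_j)/\log(u_i)=|\log v_j|/|\log u_i|\in(0,\infty)$, so that $h(u_i,v_j)=1/(1+x_{ij})$; the identity $x_{11}x_{22}=x_{12}x_{21}$ holds together with $x_{12}\le x_{11},x_{22}\le x_{21}$. Introducing the decreasing bijection $\ell\colon(0,1)\to\R$, $\ell(t):=\log\big(\tfrac{1-t}{t}\big)$, with inverse $\ell^{-1}(\sigma)=1/(1+e^{\sigma})$, and $\sigma_{ij}:=\ell(h(u_i,v_j))=\log x_{ij}$, the above becomes
$$\sigma_{11}+\sigma_{22}=\sigma_{12}+\sigma_{21},\qquad \sigma_{12}\le\sigma_{11},\sigma_{22}\le\sigma_{21},$$
and the $\sigma_{ij}$ all lie in $\ell(J)$ exactly when $[u_1,u_2]\times[v_1,v_2]\subseteq h^{-1}(J)$. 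Writing $G:=\log\circ F_A\circ\ell^{-1}$, the inequality $I_2\ge1$ from \eqref{EVC.Ineq.Final} — i.e.\ TP2 of $F_A\circ h$ on $h^{-1}(J)$ — is then equivalent to $G(\sigma_{11})+G(\sigma_{22})\ge G(\sigma_{12})+G(\sigma_{21})$ for all such quadruples.

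The next step is to observe that this is governed by concavity of $G$. Since $d\mapsto G(c-d)+G(c+d)$ is non‑increasing whenever $G$ is concave, concavity of $G$ on $\ell(J)$ is \emph{sufficient} for the inequality above (the pair $(\sigma_{12},\sigma_{21})$ majorizes $(\sigma_{11},\sigma_{22})$). It is also \emph{necessary} when $J=[t^\ast,1/2]$: given $\sigma<\sigma'$ in $\ell(J)$, set $m:=(\sigma+\sigma')/2$ and consider the rectangle $u_1=e^{-1}$, $v_1=e^{-e^{m}}$, $u_2=e^{-e^{(\sigma-\sigma')/2}}$, $v_2=e^{-e^{\sigma}}$; a direct computation shows it lies in $(0,1)^2$, is contained in $h^{-1}(J)$, and realises $\sigma_{12}=\sigma$, $\sigma_{21}=\sigma'$, $\sigma_{11}=\sigma_{22}=m$. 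Thus TP2 of $F_A\circ h$ on $h^{-1}(J)$ forces midpoint concavity of $G$ on $\ell(J)$, and together with continuity of $F_A$ on $[t^\ast,1]$ (which holds since $A$ is continuous and, by hypothesis, $D^+A$ is continuous there) this gives concavity of $G$ on $\ell(J)$.

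Finally I would pass from $G$ back to $F_A$ by composition rules. The map $\ell^{-1}(\sigma)=1/(1+e^{\sigma})$ is decreasing, \emph{concave} on $(-\infty,0]=\ell\big([1/2,1)\big)$ and \emph{convex} on $[0,\infty)\supseteq\ell\big([t^\ast,1/2]\big)$; dually, $\ell$ is decreasing, convex on $(0,1/2]$ and concave on $[1/2,1)$; and $\log\circ F_A$ is non‑decreasing wherever $F_A>0$. For part~2: if $F_A$ is log‑concave on $[1/2,1)$ then $\log F_A$ is concave and non‑decreasing there, so $G=(\log F_A)\circ\ell^{-1}$ is concave on $(-\infty,0]$ (a non‑decreasing concave function after a concave map), and by the sufficiency above $F_A\circ h$ is TP2 on $h^{-1}\big([1/2,1)\big)$. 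For part~1: by the previous paragraph, TP2 of $F_A\circ h$ on $h^{-1}\big([t^\ast,1/2]\big)$ makes $G$ concave on $\ell\big([t^\ast,1/2]\big)$, where it is also non‑increasing; hence $\log F_A=G\circ\ell$ is concave on $[t^\ast,1/2]$ (a non‑increasing concave function after a convex map), i.e.\ $F_A$ is log‑concave on $[t^\ast,1/2]$.

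The step I expect to be the main obstacle is the necessity half in the second paragraph: one must exhibit, for every prescribed symmetric triple inside $\ell\big([t^\ast,1/2]\big)$, an admissible (and necessarily non‑square) rectangle whose four $h$‑values hit exactly the required $\sigma_{ij}$, and check that it never leaves the curved region $h^{-1}\big([t^\ast,1/2]\big)=\{(u,v)\colon f_{1/2}(u)\le v\le f_{t^\ast}(u)\}$; this bookkeeping, together with the passage from midpoint to full concavity and the treatment of the boundary values of $t$ at which $F_A$ may vanish, is where the care is needed. As a sanity check, a formal second‑order computation gives $G''(\sigma)=s(1-s)\big[s(1-s)(\log F_A)''(s)+(1-2s)(\log F_A)'(s)\big]$ with $s=\ell^{-1}(\sigma)$, which makes both implications transparent — the sign of $1-2s$ switching exactly at $s=1/2$, in parallel with the convexity/concavity of $f_t$.
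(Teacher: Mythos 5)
Your proof is correct, but it takes a genuinely different route from the paper's. The paper proves part~1 by a bare-hands rectangle construction directly in the $t$-variable: fix $u_1$, set $v_2=f_t(u_1)$, use strict log-convexity of $p\mapsto(1-p)/p$ on $(0,1/2]$ to squeeze $u_2$ between $u_1^{\frac{1-t}{t}\frac{r}{1-r}}$ and $u_1^{\frac{1-r}{r}\frac{s}{1-s}}$, so that the inner corners satisfy $h\le r$, and then invoke monotonicity of $F_A$ to get midpoint concavity of $\log F_A$; part~2 is delegated to the general composition result Lemma~\ref{App.Extension.MO} (whose proof runs through Schoenberg's P\'olya-frequency argument) together with $2$-increasingness of $h$ on $h^{-1}\big([1/2,1)\big)$. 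You instead make the additive structure $\ell(h(u,v))=\log(-\log v)-\log(-\log u)$ explicit, which converts TP2 of $F_A\circ h$ into a Karamata/majorization statement for $G=\log\circ F_A\circ\ell^{-1}$ and lets you treat sufficiency and necessity uniformly; your explicit rectangle $u_1=e^{-1}$, $v_1=e^{-e^{m}}$, $u_2=e^{-e^{(\sigma-\sigma')/2}}$, $v_2=e^{-e^{\sigma}}$ does realise $\sigma_{12}=\sigma$, $\sigma_{21}=\sigma'$, $\sigma_{11}=\sigma_{22}=m$ exactly and stays inside $h^{-1}(J)$ by monotonicity of $h$, and the transfer back to $F_A$ via the convexity of $\ell$ on $(0,1/2]$ and concavity of $\ell^{-1}$ on $(-\infty,0]$ is sound. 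What your approach buys is a single structural explanation of why $1/2$ is the dividing point (the inflection of $\ell^{-1}$) and, in the smooth case, an immediate re-derivation of the criterion in Theorem~\ref{EVC.TP2.Aeq}; what the paper's route buys is that its part~2 lemma applies to arbitrary $2$-increasing kernels, not just the separable $h$. The only loose ends in your write-up — full (not just midpoint) concavity via continuity of $F_A$ on $[t^\ast,1]$, and the degenerate value $\log F_A(t^\ast)=-\infty$ when $F_A(t^\ast)=0$ — are handled with exactly the same level of implicitness as in the paper, so I do not count them as gaps.
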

\begin{proof}
We first assume that $F_A\circ h$ is TP2, 
or equivalently $\log \circ F_A \circ h$ is $2$-increasing, on $h^{-1} \big(\big[t^\ast,1/2\big]\big)$. 
Since $F_A$ is assumed to be continuous on $[t^\ast,1/2]$ it is sufficient to show that $\log \circ F_A$ is midpoint concave.
To this end, for every $t^\ast\leq s<t \leq 1/2$ we will construct a rectangle 
$[u_1,v_1] \times [u_2,v_2] \subseteq h^{-1} \big(\big[t^\ast,1/2\big]\big)$ with $h(u_1,v_2) = t, h(u_2,v_1) = s$ and $\max\{h(u_1,v_1),h(u_2,v_2)\} \leq r:=(s+t)/2$
and then use the $2$-increasingness of $\log \circ F_A \circ h$ to show $\log(F_A(s)) + \log(F_A(t)) \leq 2 \log(F_A(r))$ 
(see Figure \ref{LemmaF_ALog_Concave} for an illustration).
\begin{figure}[ht!]
        \begin{center}
            \begin{tikzpicture}[scale=4, domain=0:1]
	%\draw[step=.1cm, gray, very thin] (-2,-2) grid (2,2); 
	% Achsen
	\draw[->,line width=1] (-0.2,0) -- (1.2,0);
	\draw[->,line width=1] (0,-0.2) -- (0,1.2);
	% Einheitsquadrat
	\draw[-,line width=1] (0,1) -- (1,1);
	\draw[-,line width=1] (1,0) -- (1,1);
	% x-Axis label
	\draw[-,line width=1] (1,0) -- (1,-0.02);
	\node[below=1pt of {(1,-0.02)}, scale= 0.75, outer sep=2pt,fill=white] {$1$};
	% y-Axis label
	\draw[-,line width=1] (0,1) -- (-0.02,1);
	\node[left=1pt of {(-0.02,1)}, scale= 0.75, outer sep=2pt,fill=white] {$1$};
	% f_t convex
	\draw[scale = 1, color=blue]   plot[samples=100] (\x,{\x^1.5}); % t
	\draw[scale = 1, color=blue]   plot[samples=100] (\x,{\x^3}); % r
	\draw[scale = 1, color=blue]   plot[samples=100] (\x,{\x^9}); % s
	% Choosen Points
	\draw[-,line width=1] (0.5,0) -- (0.5,-0.02);
	\node[below=1pt of {(0.5,-0.02)}, scale= 0.75, outer sep=2pt] {$u_1$};
	\draw[-,line width=1] (0.75,0) -- (0.75,-0.02);
	\node[below=1pt of {(0.75,-0.02)}, scale= 0.75, outer sep=2pt] {$u_2$};
	\draw[-,line width=1] (0,0.075) -- (-0.02, 0.075);
	\node[left=1pt of {(0,0.075)}, scale= 0.75, outer sep=2pt] {$v_1$};
	\draw[-,line width=1] (0,0.35) -- (-0.02, 0.35);
	\node[left=1pt of {(0,0.35)}, scale= 0.75, outer sep=2pt] {$v_2$};
	% dashed lines
	\draw[-,line width=1, dashed] (0.5,0) -- (0.5,0.35);
	\draw[-,line width=1, dashed] (0.75,0) -- (0.75,0.35);
	\draw[-,line width=1, dashed] (0,0.075) -- (0.75,0.075);
	\draw[-,line width=1, dashed] (0,0.35) -- (0.75,0.35);
	% Points
	\node at (0.5,0.075)[circle,fill,inner sep=1.5pt, green]{};
	\node at (0.5,0.35)[circle,fill,inner sep=1.5pt, green]{};
	\node at (0.75,0.075)[circle,fill,inner sep=1.5pt, green]{};
	\node at (0.75,0.35)[circle,fill,inner sep=1.5pt, green]{};
	\end{tikzpicture}
	\caption{Blue lines are the contour lines of $f_{s}, f_{r}$ and $f_{t}$ where $s < r < t < 1/2$.}\label{LemmaF_ALog_Concave}
        \end{center}
\end{figure}
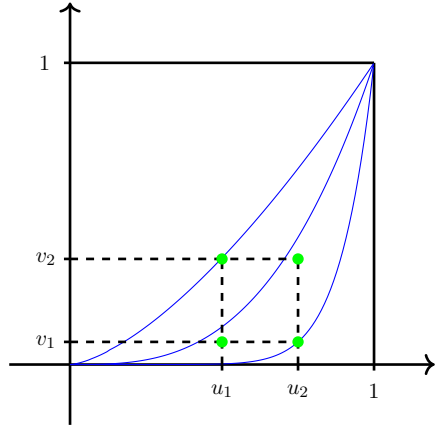
Fix $u_1 \in (0,1)$ and set $v_2 := f_t(u_1)$.
Since $p \mapsto (1-p)/p$ is positive, strictly decreasing and strictly log-convex on $(0,1/2]$ we have
$$\tfrac{1-s}{s} \, \tfrac{1-t}{t} > \left(\tfrac{1-r}{r}\right)^2 \text{ and hence } 1 > \tfrac{1-t}{t} \tfrac{r}{1-r} > \tfrac{1-r}{r} \tfrac{s}{1-s}. $$
We can thus choose $u_2 \in (u_1, 1)$ such that $u_1^{\frac{1-t}{t} \frac{r}{1-r}} < u_2 < u_1^{\frac{1-r}{r} \frac{s}{1-s}}$ and set $v_1 := f_s(u_2)$.
Then, $h(u_1,v_2) = t$, $h(u_2,v_1) = s$ and monotonicity of $h$ yields
\begin{align*}
    h(u_1,v_1) \leq h \big(u_1,u_1^{\frac{1-r}{r}}\big) = r
\end{align*}
and 
\begin{align*}
    h(u_2,v_2) 
		\leq h \left(u_1^{\frac{1-t}{t} \frac{r}{1-r}}, u_1^{\frac{1-t}{t}} \right) 
		  =  \frac{\frac{1-t}{t} \frac{r}{1-r}}{\frac{1-t}{t} \frac{r}{1-r} + \frac{1-t}{t}} =  r
\end{align*}
Applying $2$-increasingness of $\log \circ F_A \circ h$ and monotonicity of $\log \circ F_A$ we finally obtain
\begin{eqnarray*}
  (\log \circ F_A)(s) + (\log \circ F_A)(t)
	&   =  & (\log \circ F_A \circ h)(u_2,v_1) + (\log \circ F_A \circ h)(u_1,v_2)
	\\
	& \leq & (\log \circ F_A \circ h)(u_1,v_1) + (\log \circ F_A \circ h)(u_2,v_2) 
	\\
	& \leq & 2 \log(F_A(r))
\end{eqnarray*}
This proves the first assertion.
The second assertion follows from Lemma \ref{App.Extension.MO}
and the fact that $h$ is $2$-increasing on $h^{-1} \big(\big[1/2,1\big)\big)$.
\end{proof}

\bigskip
For those Pickands dependence functions $A$ that are three times differentiable, 
log-concavity of $F_A$ (i.e., $F_A(t)F_A''(t) - F_A'(t)^2 \leq 0$) is even more visible and Lemma \ref{EVC.logconcave} follows immediately:

\begin{Theorem}\label{EVC.TP2.Aeq}
Suppose that $D^+A(0) = -1$, there exists some $t^\ast \in [0,1/2]$ such that $D^+A(t) = -1$ for all $t \in [0,t^\ast)$ and $A$ is three times differentiable on $(t^\ast,1)$.
Then the following statements are equivalent:
\begin{enumerate} \itemsep=0mm
\item[(a)] $F_A \circ h$ is TP2 on $h^{-1} \big([t^\ast,1)\big)$.

\item[(b)] The inequality 
\begin{equation} \label{EVC.suff.Cond.}
  \frac{F_A(t)F_A''(t) - F_A'(t)^2}{F_A^2(t)} \; t(1-t) + \frac{F_A'(t)}{F_A(t)} \; (1-2t) \leq 0
\end{equation}
holds for all $t \in (t^\ast,1)$.
\end{enumerate}
\end{Theorem}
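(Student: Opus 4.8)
The plan is to translate the TP2 property of $F_A\circ h$ into the nonnegativity of a mixed second partial derivative and then reduce it, via the chain rule, to the one-variable inequality \eqref{EVC.suff.Cond.}.

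\textbf{Reduction to a mixed partial.} On $(t^\ast,1)$ we have $F_A=A+(1-t)D^+A$, hence $F_A'=(1-t)A''$ and $F_A''=-A''+(1-t)A'''$, so $F_A$ is twice differentiable there (this is where the third derivative of $A$ enters); also $F_A>0$ on $(t^\ast,1)$ (otherwise \eqref{EVC.suff.Cond.} would not be defined and $F_A\circ h$ is trivially TP2 on the part where it vanishes). Put $g:=\log\circ F_A\circ h$. Since $h(u,v)\in(0,1)$ always and $h$ is strictly decreasing in $u$ and strictly increasing in $v$, a rectangle $[u_1,u_2]\times[v_1,v_2]$ (with $u_1\le u_2$, $v_1\le v_2$) lies in $h^{-1}\big([t^\ast,1)\big)$ exactly when $h(u_2,v_1)\ge t^\ast$, and it then lies in the open set $h^{-1}\big((t^\ast,1)\big)$ unless it meets the curve $\{h=t^\ast\}$. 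Using continuity and positivity of $F_A\circ h$ on $h^{-1}\big([t^\ast,1)\big)$, I would conclude that $F_A\circ h$ is TP2 there if and only if $g$ is $2$-increasing on $h^{-1}\big((t^\ast,1)\big)$, and — since $g$ is twice differentiable on that open set — if and only if $\partial_u\partial_v g\ge 0$ throughout it.

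\textbf{The computation.} With $a=\log u$, $b=\log v$ one has $h=a/(a+b)$, and, setting $t:=h(u,v)$, $t(1-t)=ab/(a+b)^2$ and $1-2t=(b-a)/(a+b)$. Differentiating,
\[
h_u=\frac{b}{u(a+b)^2},\qquad h_v=\frac{-a}{v(a+b)^2},\qquad h_{uv}=\frac{a-b}{uv(a+b)^3},
\]
so that
\[
h_uh_v=\frac{-t(1-t)}{uv\,(a+b)^2},\qquad h_{uv}=\frac{-(1-2t)}{uv\,(a+b)^2}.
\]
With $\phi:=\log\circ F_A$ the chain rule then gives
\[
\partial_u\partial_v g=\phi''(t)\,h_uh_v+\phi'(t)\,h_{uv}=\frac{-1}{uv\,(a+b)^2}\Big(\phi''(t)\,t(1-t)+\phi'(t)\,(1-2t)\Big).
\]

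\textbf{Conclusion and the main obstacle.} Since $uv(a+b)^2>0$, the inequality $\partial_u\partial_v g\ge 0$ is equivalent to $\phi''(t)\,t(1-t)+\phi'(t)\,(1-2t)\le 0$, and substituting $\phi'=F_A'/F_A$, $\phi''=(F_AF_A''-(F_A')^2)/F_A^2$ turns this into exactly \eqref{EVC.suff.Cond.}. Because $t=h(u,v)$ ranges over all of $(t^\ast,1)$ as $(u,v)$ ranges over $h^{-1}\big((t^\ast,1)\big)$ (fix $u$ and take $v=f_t(u)$), requiring $\partial_u\partial_v g\ge0$ on $h^{-1}\big((t^\ast,1)\big)$ is the same as requiring \eqref{EVC.suff.Cond.} for all $t\in(t^\ast,1)$; together with the reduction this yields the equivalence of (a) and (b). The one delicate point is the implication "$\partial_u\partial_v g\ge0\Rightarrow g$ is $2$-increasing": since $A'''$ is only assumed to exist, $\partial_u\partial_v g$ need not be continuous, so one must first check that $v\mapsto\partial_u g(u,v)$ is the indefinite integral of $\partial_u\partial_v g(u,\cdot)$ before concluding that $u\mapsto g(u,v_2)-g(u,v_1)$ is non-decreasing; the remaining work is the routine algebra above and the lattice/continuity bookkeeping needed to move between $h^{-1}\big([t^\ast,1)\big)$ and its interior.
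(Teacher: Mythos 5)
Your proposal is correct and follows essentially the same route as the paper: both reduce TP2 of $F_A\circ h$ to nonnegativity of $\partial_u\partial_v\log(F_A(h(u,v)))$ and use the chain rule together with the identity $\partial_{uv}h=\partial_uh\,\partial_vh\,\tfrac{1-2h}{h(1-h)}$ (your $a,b$-substitution is just a cosmetic repackaging of the paper's computation of $\partial_1h$, $\partial_2h$, $\partial_{12}h$). The regularity worry you flag at the end resolves more simply than you suggest: since $\partial_v\bigl(\partial_u g\bigr)\ge 0$ pointwise, the mean value theorem already gives that $v\mapsto\partial_u g(u,v)$ is non-decreasing, so no integral representation is needed.
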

\begin{proof}
Notice that (a) is equivalent to $\log \circ F_A \circ h$ being 2-increasing on $h^{-1} ([t^\ast,1))$,
i.e., 
\begin{align*}
  0 
	& \leq \frac{\partial^2}{\partial u\partial v} \log(F_A(h(u,v))) \hspace{8cm} (\ast)
	\\
	=  &\frac{F_A(h(u,v))F_A''(h(u,v)) - F_A'(h(u,v))^2}{F_A(h(u,v))^2} \partial_1 h(u,v) \, \partial_2 h(u,v)  \\ & + \frac{F_A'(h(u,v))}{F_A(h(u,v))} \partial_{12} h(u,v)
\end{align*}
for all $(u,v) \in h^{-1} ([t^\ast,1))$, 
where 
\begin{itemize}
\item $\partial_1 h(u,v) := \frac{\partial}{\partial u} h(u,v) = \frac{\log(v)}{u(\log(uv))^2} < 0$
\item $\partial_2 h(u,v) := \frac{\partial}{\partial v} h(u,v) = \frac{-\log(u)}{v(\log(uv))^2} > 0$
\item $\partial_{12} h(u,v) := \frac{\partial^2}{\partial u \partial v} h(u,v)=\frac{\log(u) - \log(v)}{uv (\log(uv))^3}$
\end{itemize}
Since 
\begin{eqnarray*}
  \partial_{12} h(u,v)
	%& = & \frac{\log(v)}{u(\log(uv))^2} \; \frac{-\log(u)}{v(\log(uv))^2} \; \frac{(\log(v) - \log(u)) \log(uv)}{\log(u)\log(v)}
	%\\
	& = & \partial_1 h(u,v) \, \partial_2 h(u,v) \, \frac{1-2h(u,v)}{h(u,v) (1-h(u,v))} 
\end{eqnarray*}
we have \pagebreak
\begin{eqnarray*}
  0 
	& \leq & \frac{\partial^2}{\partial u\partial v} \log(F_A(h(u,v))) 
	\\
	&   =  & \biggl( \frac{F_A(h(u,v))F_A''(h(u,v)) - F_A'(h(u,v))^2}{F_A(h(u,v))^2} + \frac{F_A'(h(u,v))}{F_A(h(u,v))} \frac{1-2h(u,v)}{h(u,v) (1-h(u,v))}\biggr)  \\ 
	&      & \cdot \, \partial_1 h(u,v) \, \partial_2 h(u,v)
\end{eqnarray*}
Therefore, $(\ast)$ holds if and only if 
$$
  0 \geq \frac{F_A(t)F_A''(t) - F_A'(t)^2}{F_A^2(t)} \; t(1-t) + \frac{F_A'(t)}{F_A(t)} \; (1-2t)
$$
for all $t \in (t^\ast,1)$. This proves the assertion.
\end{proof}

\begin{Remark}
It is worth mentioning that Inequality (\ref{EVC.suff.Cond.}) is equivalent to the non-increasingness of 
\begin{equation}\label{EVC.suff.Cond.2}
  t \mapsto t\, (1-t) \, \frac{F_A'(t)}{F_A(t)}
\end{equation}
on $(t^\ast,1)$ which turns out to be neither a sufficient nor a necessary condition for log-concavity of $F_A$ 
(i.e., $ t \mapsto F_A'(t)/F_A(t) $ is non-increasing) on $(t^\ast,1)$.
\end{Remark}

Lemma \ref{EVC.logconcave} and Theorem \ref{EVC.TP2.Aeq} suggest that log-concavity of $F_A$ could be a sufficient condition for MK-TP2. The next example contradicts this conjecture.

\begin{Example} \label{Ex.Pickands.Log}
The mapping $A: \I \to \I$ given by
$$
  A(t) 
	:= \log\left( t^{3/2} + (1-t)^{3/2} \right)^{2/3} + 1
$$
is a Pickands dependence function fulfilling $D^+A(0) = -1$ and being (at least) three times differentiable on $(0,1)$ with
\begin{eqnarray*}
  F_A(t) 
	& = & \frac{2}{3}\log\left( t^{3/2} + (1-t)^{3/2} \right) + \frac{t^{1/2}}{t^{3/2} + (1-t)^{3/2}}
	\\
	F_A'(t) 
	& = & \frac{1-t}{(t^{3/2} + (1-t)^{3/2})^2} \, \frac{1 + 4t(1-t) - 2\sqrt{(1-t)t}}{2\sqrt{t(1-t)}}
\end{eqnarray*}
Figure \ref{Fig.Pickands.Log} depicts a sample of the corresponding EVC.
\begin{figure}[h!]
		\centering
		\includegraphics[width=0.7\textwidth]{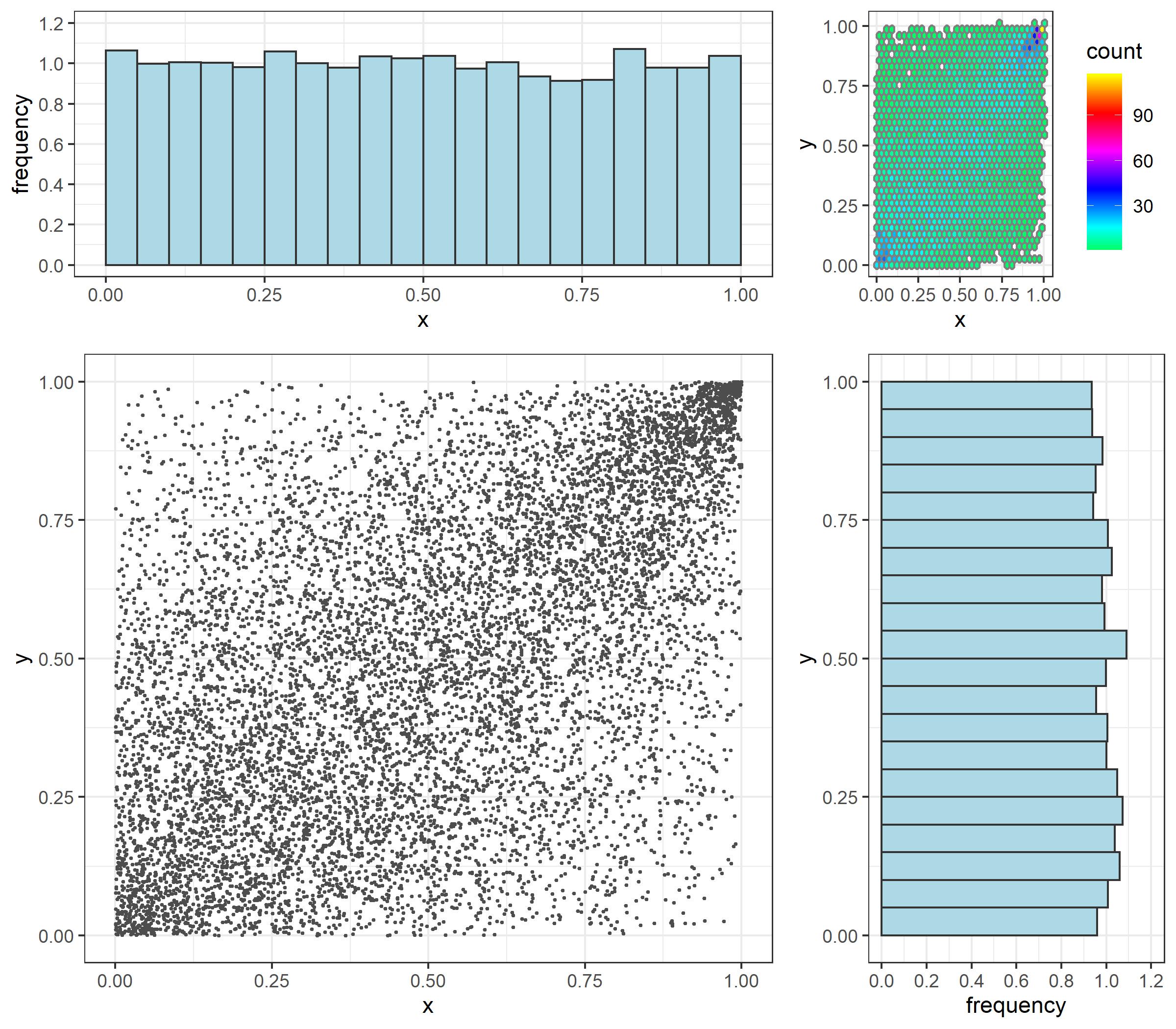}
		\caption{Sample of size $n=10.000$ drawn from the EVC discussed in Example \ref{Ex.Pickands.Log} with Pickands dependence function $ A(t) 
	:= \log\left( t^{3/2} + (1-t)^{3/2} \right)^{2/3} + 1$.}
		\label{Fig.Pickands.Log}
\end{figure}
Tedious calculation yields that $F_A$ is log-concave.
However, for $t_1 = 0.1$ and $t_2 = 0.2$
$$
  \frac{F_A'(t_1)}{F_A(t_1)}t_1(1-t_1) = 0.474 < 0.505 = \frac{F_A'(t_2)}{F_A(t_2)}t_2(1-t_2)
$$
which contradicts (\ref{EVC.suff.Cond.2}), hence $F_A \circ h$ fails to be TP2 on $(0,1)^2$. 
It even holds that the corresponding EVC is not MK-TP2 which follows considering the rectangle 
$[0.9,0.95] \times [0.5,0.6]$.
\end{Example}

In what follows, we examine for selected classes of EVCs for which parameters the MK-TP2 property is fulfilled and for which it is not.

\begin{Example} (Gumbel copula) \\
For $\alpha \in [1,\infty)$, the mapping $A_\alpha: \I \to \I$ given by 
$$
  A_\alpha (t) 
	:= (t^\alpha + (1-t)^\alpha)^{1/\alpha}
$$
is a Pickands dependence function; the corresponding EVC $C_\alpha$ is called Gumbel copula (see, e.g., \cite{durante2016principles}).
It immedately follows from Example \ref{GumbelMKTP2} that $C_\alpha$ is MK-TP2 by taking into account its Archimedean structure.
\end{Example}

We now consider the asymmetric mixed model by Tawn (see, e.g., \cite{gene2009, tawn1988}) which generalizes Example \ref{RemarkATheta}:

\begin{Example} (Asymmetric mixed model by Tawn) \\
For $\theta$ and $\kappa$ satisfying $\theta \geq 0$, $\theta+3\kappa\geq0$, $\theta+\kappa\leq1$ and $\theta + 2\kappa \leq 1$, 
the mapping $A: \I \to \I$ given by
$$
  A(t) := 1 - (\theta+\kappa)t+ \theta t^2 + \kappa t^3
$$
is a Pickands dependence function; the corresponding EVC $C_{\theta,\kappa}$ occurs in Tawn's model \cite{tawn1988}.
Note that the parameter conditions imply $0 \leq \theta+\kappa \leq 1$. 
The following statements are equivalent
\begin{enumerate} \itemsep=0mm
\item[(a)] $C_{\theta,\kappa}$ is MK-TP2.
\item[(b)] $\theta+\kappa \in \{0,1\}$.
\end{enumerate}
Indeed, considering $A'(t) = - (\theta+\kappa)+ 2\theta t + 3\kappa t^2$ and hence $A'(0)= - (\theta+\kappa)$ 
cases $\theta+\kappa = 0$ and $\theta+\kappa \in (0,1)$ immediately follow from Theorem \ref{Theorem.EVC.MKTP2}.
Now assume that $\theta+\kappa = 1$. 
Then $A(t) = 1 - t+ \theta t^2 + (1-\theta) t^3$ with $\theta \in [1,3/2]$.
Simple calculation yields
\begin{itemize}
%\item $A'(t)   = - 1+ 2\theta t + 3(1-\theta) t^2$
%\item $A''(t)  = 2\theta + 6(1-\theta) t$
%\item $A'''(t) = 6(1-\theta)$
\item $F_A(t)  = 2 \theta t(1-t)^2 + (3-2t)t^2$
\item $F_A'(t) = 2(1-t)(\theta+3(1-\theta)t)$
\item $F_A''(t) = -2 \theta + 6 (1 - \theta) (1 - t) - 6 (1 - \theta) t$
\end{itemize}
In order to show inequality (\ref{EVC.suff.Cond.}) we calculate
\begin{eqnarray*} 
  \lefteqn{\big( F_A(t)F_A''(t) - F_A'(t)^2\big) \; t(1-t) + F_A(t) \, F_A'(t) \; (1-2t)}
	\\
	& = & 2 (1-t) t^2 \, \big( - 6 \theta^2 (1-t)^3 - 6 (2-t) t^2 +  \theta (3 - 17 t + 30 t^2 - 12 t^3) \big)
\end{eqnarray*}\pagebreak
The latter term is negative if and only if 
$$
    \theta \underbrace{(3 - 17 t + 30 t^2 - 12 t^3)}_{>0}
		\geq 6 \theta^2 (1-t)^3 + 6 (2-t) t^2
$$
for all $t \in (0,1)$. 
Bounding $\theta$ on the left-hand side from below by $1$ and on the right-hand side from above by $3/2$ yields a sufficient condition, namely
$$
    (3 - 17 t + 30 t^2 - 12 t^3)
		\geq \tfrac{27}{2} (1-t)^3 + 6 (2-t) t^2
$$
which is true for all $t \in (0,1)$.
Therefore, inequality (\ref{EVC.suff.Cond.}) holds implying that $C_{\theta,\kappa}$ is MK-TP2.
\end{Example}

We now resume Example \ref{Intro.Ex.MO} and show that a Marshall-Olkin copula $M_{\alpha, \beta}$ with $\alpha, \beta \in (0,1]$ is MK-TP2 if and only if $\beta=1$:

\begin{Example} (Marshall-Olkin copula) \label{Thm.MO} \\
For $\alpha, \beta \in (0,1]$, the mapping $A: \I \to \I$ given by
$$
  A(t) = \begin{cases}
           1-\beta t 
				   & t < \frac{\alpha}{\alpha + \beta} 
					 \\
					 1-\alpha(1-t)
					 & t \geq \frac{\alpha}{\alpha + \beta}
         \end{cases}
$$
is a Pickands dependence function; the corresponding EVC $M_{\alpha, \beta}$ is called Marshall-Olkin copula (see, e.g., \cite{durante2016principles}).
The following statements are equivalent
\begin{enumerate} \itemsep=0mm
\item[(a)] $M_{\alpha, \beta}$ is MK-TP2.
\item[(b)] $\beta = 1$.
\end{enumerate}
Indeed, we first have $D^+A(0) = -\beta$.
Now, if $\beta \in (0,1)$ then $D^+A(0) \in (-1,0)$ and Theorem \ref{Theorem.EVC.MKTP2} implies that $M_{\alpha, \beta}$ is not MK-TP2. 
If, otherwise, $\beta = 1$ then  
$$
  F_A(t) 
	= \begin{cases}
      0 
			& t < \frac{\alpha}{\alpha + 1} 
			\\
      1
			& t \geq \frac{\alpha}{\alpha + 1}
    \end{cases} 
  = \mathds{1}_{[\frac{\alpha}{\alpha + 1},1]}(t)                             
$$
implying Inequality \eqref{EVC.suff.Cond.} to hold for all $t \in \big(\alpha/(\alpha + 1),1\big)$.
\end{Example}

We conclude this section by drawing the reader's attention to another quite simple condition under which a given EVC fails to be MK-TP2; the proof is deferred to the Appendix.

\begin{Lemma}\label{LemmaFAConstantNotMKTP2}
Suppose that $D^+A(0) = -1$ and that there exist $t_1, t_2 \in (0,1)$ with $t_1 < t_2$ and some $c\in(0,1)$ 
such that $F_A(t) = c$ on $[t_1, t_2]$. 
Then the corresponding EVC is not MK-TP2.
\end{Lemma}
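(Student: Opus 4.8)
The plan is to exhibit, for any EVC with $D^+A(0)=-1$ and $F_A\equiv c\in(0,1)$ on an interval $[t_1,t_2]$, a single rectangle $[u_1,u_2]\times[v_1,v_2]\subseteq(0,1)^2$ on which inequality \eqref{MKTP2.Def} is violated. The key idea is that inside the ``cone'' $h^{-1}([t_1,t_2])$ the factor $I_2$ in \eqref{EVC.Ineq.Final} is identically $1$ (since all four values $F_A(h(u_i,v_j))$ equal $c$), so MK-TP2 there is governed solely by $I_1=\big(C(u_1,v_1)C(u_2,v_2)\big)/\big(C(u_1,v_2)C(u_2,v_1)\big)$; and on a strip where $A$ is \emph{affine}, $\log C$ is an affine function of $(\log u,\log v)$ on each side, so $I_1$ degenerates too. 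However $F_A$ being constant forces $A$ to be affine on $[t_1,t_2]$ only if we are careful — in fact $F_A(t)=A(t)+(1-t)D^+A(t)=c$ is a first-order ODE whose solution is $A(t)=c+b(1-t)$ for a constant $b$, i.e. $A$ is affine on $[t_1,t_2]$ with slope $-b$. On that strip $C(u,v)=(uv)^{A(h(u,v))}=(uv)^{c}\,(uv)^{b(1-h(u,v))}=(uv)^c\,v^{b}\cdot(\text{power of }uv)$ — more precisely $C(u,v)=u^{\,c+b\cdot 0}\cdots$; a direct computation gives $\log C(u,v)=A(h)\log(uv)=c\log(uv)+b\log v$ when one uses $(1-h(u,v))\log(uv)=\log v$. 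Hence on $h^{-1}([t_1,t_2])$ the quantity $\log C(u,v)$ is \emph{additively separable}, so $I_1=1$ as well, and \eqref{MKTP2.Def} holds there with equality — no contradiction yet.

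Therefore the violation must come from a rectangle that straddles the boundary of the cone: I would take $v_1$ small enough that $h(u_2,v_1)$ just enters the region $t<t^\ast$ (recall from Lemma \ref{Lemma.EVC.1DP} and the discussion after it that if the EVC were MK-TP2 we could assume $D^+A(t)=-1$, i.e. $F_A(t)=0$, on $[0,t^\ast)$), while keeping the other three corners inside $h^{-1}([t_1,t_2])$ where $F_A=c$. Concretely, fix a value $s\in(t_1,t_2)$, pick $u_1<u_2$ in $(0,1)$ and then choose $v_2$ with $h(u_1,v_2)=s$ and $v_1$ with $h(u_2,v_1)=s$ as well (contour lines $f_s$), and then \emph{lower} $v_1$ further. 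As $v_1\downarrow 0$ we have $h(u_2,v_1)\downarrow t^\ast$ (or towards $0$), so $F_A(h(u_2,v_1))\to c$ from the left is false — rather $F_A(h(u_2,v_1))$ stays at $c$ until the contour crosses $t_1$, at which point... The cleaner route, mirroring the proof of Lemma \ref{NecessaryCondition}, is to use the limiting rectangle with $u_2=u_{2,n}=1-1/n\to1$: then $K_A(u_{2,n},[0,v])\to v\,F_A(0)=0$ because $F_A(0)=0$ whenever the copula is MK-TP2, but $K_A(u_{2,n},[0,v])/K_A(u_{2,n},[0,v'])\to v/v'$, and I can tune $u_1,v_1,v_2$ so that the finite corner ratio $K_A(u_1,[0,v_1])/K_A(u_1,[0,v_2])$ — which involves only $C(u_1,\cdot)$ and the constant $c$ since $h(u_1,v_1),h(u_1,v_2)$ lie in $[t_1,t_2]$ — fails to dominate $v_2/v_1$.

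The concrete steps, in order: (1) First observe that if the EVC is MK-TP2 then by Lemma \ref{Lemma.EVC.1DP} and the paragraph following it we may assume $F_A(0)=0$ and $D^+A$ continuous on $[t^\ast,1]$ for some $t^\ast\in[0,1/2]$; if instead $D^+A(0)\in(-1,0)$ we are already done by Lemma \ref{NecessaryCondition}, so assume $D^+A(0)=-1$. (2) On the strip $h^{-1}([t_1,t_2])$ solve $F_A=c$ to get $A(t)=c+b(1-t)$ there, with $b=-D^+A(t)>0$ constant, and deduce $\log C(u,v)=c\log(uv)+b\log v$ for $(u,v)$ in that cone, hence $C(u,v)/u = u^{c-1}v^{c+b}$. (3) Choose $u_1\in(0,1)$, then $v_1,v_2$ with $v_1<v_2$ both giving $h(u_1,v_i)\in[t_1,t_2]$, so that $K_A(u_1,[0,v_i]) = u_1^{c-1}v_i^{c+b}\cdot c$; their ratio is $(v_2/v_1)^{c+b}$. (4) Take $u_{2,n}=1-1/n$ and compute $\lim_n K_A(u_{2,n},[0,v_2])/K_A(u_{2,n},[0,v_1])=v_2/v_1$. (5) Form the MK-TP2 ratio \eqref{MKTP2.Def} in the limit: it tends to $\big[(v_1/v_2)^{c+b}\big]\cdot\big[v_2/v_1\big]=(v_2/v_1)^{1-(c+b)}$, and since $c\in(0,1)$ and $b>0$ we need $1-(c+b)$ — if this is negative the ratio is $<1$ and we are done; if $c+b\le 1$ one must instead exploit the \emph{discontinuity / boundary} more carefully, choosing $v_1$ so small that $h(u_{2,n},v_1)<t_1$ making $F_A(h(u_{2,n},v_1))$ drop below $c$, which only worsens the ratio. (6) Conclude by continuity (as in Lemma \ref{NecessaryCondition}) that the strict violation persists for some finite $n^\ast$, so the EVC is not MK-TP2.

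The main obstacle is step (5): getting the exponent $1-(c+b)$ to have the right sign without extra hypotheses. I expect one resolves it by noting that $b=-D^+A(t)$ on $[t_1,t_2]$ together with the constraint $A(t)\ge\max\{1-t,t\}$ and $A(1)=1$ forces $c+b<1$ is \emph{not} automatic — so the honest argument is the boundary-crossing one: pick the rectangle so that exactly one corner, $(u_2,v_1)$ with $u_2$ near $1$, has its $h$-value pushed out of $[t_1,t_2]$ to the left where $F_A<c$ (using monotonicity of $F_A$ and $F_A(0)=0$), which makes $I_2<1$ strictly while $I_1$ stays bounded; combined with the separable/degenerate behaviour of $C$ on the rest of the strip this yields the violation. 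Carefully choosing the four corners so that three of them stay in the constant region and the controlling fourth leaks out is the delicate bookkeeping, and it is exactly the pattern already used in Lemmas \ref{AC.Discont.Lemma} and \ref{NecessaryCondition}, so I would model the estimate on those.
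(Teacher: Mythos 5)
Your reduction in step (1), the observation that $F_A\equiv c$ forces $A$ to be affine on $[t_1,t_2]$, and the resulting multiplicative separability of $C$ on the cone $h^{-1}([t_1,t_2])$ (so that $I_1=1$ when all four corners lie in the cone) all match the skeleton of the paper's argument. But the construction of the violating rectangle is wrong in two ways. First, the direction of the boundary crossing is backwards: $h(u_2,v_1)$ is the \emph{smallest} of the four $h$-values and $F_A(h(u_2,v_1))$ sits in the \emph{denominator} of $I_2$, so pushing it to the left of $[t_1,t_2]$ where $F_A<c$ gives $I_2=c^2/\bigl(c\cdot F_A(h_{21})\bigr)=c/F_A(h_{21})>1$, which makes the MK-TP2 inequality \emph{easier} to satisfy, not harder; your claim that this "makes $I_2<1$ strictly" is a sign error at the decisive step. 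The paper does the opposite: it sets $t_2:=\sup\{t:F_A(t)\le c\}<1$ and pushes the \emph{largest} value $h(u_1,v_2)=s>t_2$ out to the \emph{right}, where $F_A(s)>c$, so that $I_2=c/F_A(s)<1$, keeping the other three corners in the constant region (Lemma \ref{LemmaLinearNotTP2Inequality}).

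Second, the limiting device $u_{2,n}=1-1/n$ borrowed from Lemma \ref{NecessaryCondition} does not survive the reduction you made in step (1): there one has $F_A(0)>0$, whereas here $F_A(0+)=0$, so either $K_A(u_{2,n},[0,v_1])=0$ for large $n$ (and then the inequality holds vacuously by Remark \ref{MKTP2.Def.2}) or the ratio $F_A(h(u_{2,n},v_2))/F_A(h(u_{2,n},v_1))$ is an unresolved $0/0$ form; your asserted limit $v_2/v_1$ is unjustified. Finally, even with $I_2<1$ in hand one still needs quantitative control of $I_1\ge 1$ (every EVC is TP2, so $I_1$ works against you); "stays bounded" is not enough. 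The paper handles this by the exact cancellation of Lemma \ref{LemmaLinearNotTP2Equality} on the affine strip together with the convexity bound $0\le A(s)-(as+b)\le 1/2$ and a choice of $u_1$ close to $1$ (inequality \eqref{LinearNotTP2Assumption}), which yields the closed-form ratio $\tfrac{a+b}{F_A(s)}\bigl(u_1^{1/s}\bigr)^{as+b-A(s)}<1$. As it stands, your construction would not produce a violation.
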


In other words, if the function $F_A$ is constant on an interval, then the corresponding EVC is not MK-TP2.
We illustrate this result with an example:

\begin{Example}{}\label{CounterExampleJump}
The mapping $A: \I \to \I$ given by
$$
  A(t) 
	:= \begin{cases}
	    1-t 
			& t \in \big[0,\tfrac{1}{8}\big)
			\\
			\tfrac{15}{16}-\tfrac{1}{2} t 
			& t \in \big[\tfrac{1}{8},\tfrac{1}{4}\big)
			\\
			\tfrac{12}{16}+\big(x-\tfrac{1}{2}\big)^2 & t \in \big[\tfrac{1}{4},1\big]
		 \end{cases}
$$
is a Pickands dependence function fulfilling $D^+A(t) = -1$ for all $t \in \big[0,1/8\big)$ and 
$$
  F_A(t) 
	= \begin{cases}
	    0 
			& t \in \big[0,\tfrac{1}{8}\big)
			\\
			\tfrac{7}{16}
			& t \in \big[\tfrac{1}{8},\tfrac{1}{4}\big)
			\\
			t(2-t) & t \in \big[\tfrac{1}{4},1\big]
		 \end{cases}
$$
(see Figure \ref{FigureCounterExampleJump} for an illustration).
It is straightforward to verify that $F_A \circ h$ fails to be TP2 on $\big[1/8,1\big]$ and it follows from Lemma \ref{LemmaFAConstantNotMKTP2} that the corresponding EVC is not MK-TP2. 
%(\textcolor{magenta}{not log-concave})
\begin{figure}[ht!]
    \centering
    \begin{subfigure}{.45\textwidth}
        \centering
        \begin{tikzpicture}[scale=4, domain=0:1]
	%\draw[step=.1cm, gray, very thin] (-2,-2) grid (2,2); 
	% Achsen
	\draw[->,line width=1] (-0.2,0) -- (1.2,0);
	\draw[->,line width=1] (0,-0.2) -- (0,1.2);
	% x-Axis label
	\draw[-,line width=1] (1,0) -- (1,-0.02);
	\node[below=1pt of {(1,-0.02)}, scale= 0.75, outer sep=2pt,fill=white] {$1$};
	% y-Axis label
	\draw[-,line width=1] (0,1) -- (-0.02,1);
	\node[left=1pt of {(-0.02,1)}, scale= 0.75, outer sep=2pt,fill=white] {$1$};
	% dashed lines
	\draw[-,line width=1, dashed] (0,0) -- (1,1); 
	\draw[-,line width=1, dashed] (0,1) -- (1,0); 
	\draw[scale = 1, color=blue]   plot[samples=100, domain=0:0.125] (\x,{1-\x});
	\draw[scale = 1, color=blue]   plot[samples=100, domain=0.125:0.25] (\x,{15/16-0.5*\x});
	\draw[scale = 1, color=blue]   plot[samples=100, domain=0.25:1] (\x,{12/16+(\x-0.5)^2});
	\end{tikzpicture}
	    \caption{Plot of Pickands dependence function $A$.}
    \end{subfigure}%
    \begin{subfigure}{.45\textwidth}
        \centering
        \begin{tikzpicture}[scale=4, domain=0:1]
	%\draw[step=.1cm, gray, very thin] (-2,-2) grid (2,2); 
	% Achsen
	\draw[->,line width=1] (-0.2,0) -- (1.2,0);
	\draw[->,line width=1] (0,-0.2) -- (0,1.2);
	% x-Axis label
	\draw[-,line width=1] (1,0) -- (1,-0.02);
	\node[below=1pt of {(1,-0.02)}, scale= 0.75, outer sep=2pt,fill=white] {$1$};
	% y-Axis label
	\draw[-,line width=1] (0,1) -- (-0.02,1);
	\node[left=1pt of {(-0.02,1)}, scale= 0.75, outer sep=2pt,fill=white] {$1$};
	\draw[scale = 1, color=blue]   plot[samples=100, domain=0:0.125] (\x,{0});
	\draw[scale = 1, color=blue]   plot[samples=100, domain=0.125:0.25] (\x,{7/16});
	\draw[scale = 1, color=blue]   plot[samples=100, domain=0.25:1] (\x,{\x*(2-\x)});
	\node at (0.125,7/16)[circle,fill,inner sep=1.5pt, blue]{};
	\draw[blue] (0.125,0) circle (0.015);
	\end{tikzpicture}
	    \caption{Plot of function $F_A$.}
    \end{subfigure}\caption{Corresponding functions $A$ and $F_A$ for Example \ref{CounterExampleJump}.}
		\label{FigureCounterExampleJump}
\end{figure}
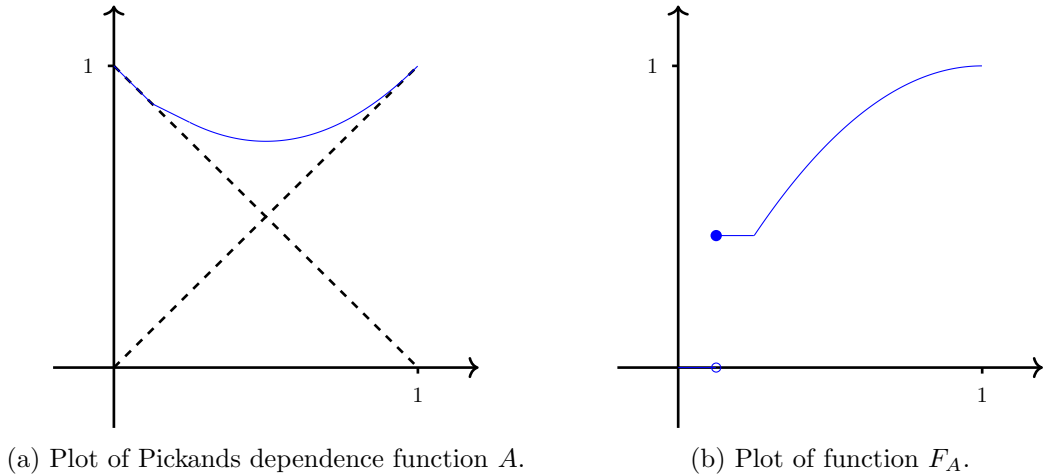
\end{Example}{}

\section*{Acknowledgement}
The first author gratefully acknowledges the support of the WISS 2025 project 
'IDA-lab Salzburg' (20204-WISS/225/197-2019 and 20102-F1901166-KZP).
The second author gratefully acknowledges the financial support from AMAG Austria Metall AG within the project ProSa.

%% The Appendices part is started with the command \appendix;
%% appendix sections are then done as normal sections
%% \appendix

%% \section{}
%% \label{}

%% If you have bibdatabase file and want bibtex to generate the
%% bibitems, please use
%%
%\bibliographystyle{elsarticle-num} 
%\bibliography{biblio.bib}

%% else use the following coding to input the bibitems directly in the
%% TeX file.

%\begin{thebibliography}{00}

%% \bibitem{label}
%% Text of bibliographic item

%\bibitem{}

%\end{thebibliography}

%%%%%%%%%%%%%%%%%%%%%%%%%%%%%%%%%%%%%%%%%%%%%%%%%%%%%%%%%%%%%%%%%%%%%%%%%%%%%%%%%%%%%%%%%%%%%%%%%%%%%%%%%%%%%%%%%%%%%%%%%%%%%%
%%%%%%%%%%%%%%%%%%%%%%%%%%%%%%%%%%%%%%%%%%%%%%%%%%%%%%%%%%%%%%%%%%%%%%%%%%%%%%%%%%%%%%%%%%%%%%%%%%%%%%%%%%%%%%%%%%%%%%%%%%%%%%
%%%%%%%%%%%%%%%%%%%%%%%%%%%%%%%%%%%%%%%%%%%%%%%%%%%%%%%%%%%%%%%%%%%%%%%%%%%%%%%%%%%%%%%%%%%%%%%%%%%%%%%%%%%%%%%%%%%%%%%%%%%%%%
%\appendix
\section{Supplementary material}\label{Sect.App.}

In this appendix we gather some helpful results concerning the TP2 property and extreme value copulas.

By definition, 
a copula $C$ that is TP2 (or SI) satisfies
$C(u,v) \geq \Pi(u,v) > 0$ for all $(u,v) \in (0,1)^2$.
The next result characterizes the TP2 property in terms of a monotonicity property:

\begin{Lemma}{} \label{DP.TP2.Charakterisierung}
Consider some copula $C$ satisfying $C(u,v) > 0$ on $(0,1)^2$.
Then the following statements are equivalent:
\begin{enumerate} \itemsep=0mm
\item[(a)] $C$ is TP2.
\item[(b)] $\log \circ C$ is $2$-increasing.
\item[(c)] For any $u \in (0,1)$, the mapping $(0,1) \to \mathbb{R}$ given by
$$
  v \mapsto \frac{K_C(u,[0,v])}{C(u,v)} 
$$
is non-decreasing.
\end{enumerate}
\end{Lemma}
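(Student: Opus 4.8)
The plan is to establish the two equivalences $(a)\Leftrightarrow(b)$ and $(b)\Leftrightarrow(c)$; the first is essentially a rewriting, while the second carries the analytic content. For $(a)\Leftrightarrow(b)$ I would argue as follows: since $C>0$ on $(0,1)^2$, for all $u_1\le u_2$ and $v_1\le v_2$ in $(0,1)$ the four numbers $C(u_i,v_j)$ are strictly positive, so \eqref{CTP2.Def} is equivalent to $C(u_1,v_1)\,C(u_2,v_2)\ge C(u_1,v_2)\,C(u_2,v_1)$; applying the strictly increasing function $\log$ turns this into $\log C(u_1,v_1)+\log C(u_2,v_2)\ge\log C(u_1,v_2)+\log C(u_2,v_1)$, which is exactly $2$-increasingness of $\log\circ C$, and exponentiating reverses the step. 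Hence $(a)$ and $(b)$ are one and the same inequality, in multiplicative resp.\ additive form.

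For $(b)\Leftrightarrow(c)$ I would first record the regularity facts needed: for each fixed $v\in(0,1)$ the map $u\mapsto C(u,v)$ is non-decreasing and $1$-Lipschitz, hence absolutely continuous, and for the standard version of the Markov kernel one has $\tfrac{\partial}{\partial u}C(u,v)=K_C(u,[0,v])$ for $\lambda$-a.e.\ $u$; moreover on every compact $[a,b]\subset(0,1)$ we have $C(u,v)\ge C(a,v)>0$, so $u\mapsto\log C(u,v)$ is locally absolutely continuous on $(0,1)$ with $\tfrac{\partial}{\partial u}\log C(u,v)=K_C(u,[0,v])/C(u,v)$ a.e. For $(b)\Rightarrow(c)$, fix $0<v_1\le v_2<1$; by $(b)$ the function $g(u):=\log C(u,v_2)-\log C(u,v_1)$ is non-decreasing, and being locally absolutely continuous it satisfies $g'\ge 0$ a.e., i.e.\ $K_C(u,[0,v_1])/C(u,v_1)\le K_C(u,[0,v_2])/C(u,v_2)$ for a.e.\ $u$. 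Letting $(v_1,v_2)$ range over all pairs from a countable dense subset of $(0,1)$, discarding the resulting null set, and using that $v\mapsto K_C(u,[0,v])$ is right-continuous (a conditional distribution function) while $v\mapsto C(u,v)$ is continuous and positive, the monotonicity of $v\mapsto K_C(u,[0,v])/C(u,v)$ extends from the dense set to all of $(0,1)$ for a.e.\ $u$, which is $(c)$ for this version of $K_C$. For $(c)\Rightarrow(b)$, with the same $g$ and $0<u_1\le u_2<1$, local absolute continuity gives $g(u_2)-g(u_1)=\int_{u_1}^{u_2}\big(K_C(u,[0,v_2])/C(u,v_2)-K_C(u,[0,v_1])/C(u,v_1)\big)\,\mathrm{d}\lambda(u)\ge 0$ by $(c)$, so $g$ is non-decreasing and $\log\circ C$ is $2$-increasing.

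The hard part will not be any single estimate but the measure-theoretic bookkeeping: identifying the a.e.-defined Markov kernel with $\partial_1 C$, coping with the fact that $\log C$ is only \emph{locally} absolutely continuous on $(0,1)$ because $C$ may vanish on the boundary, and upgrading the "for a.e.\ $u$, each fixed pair $v_1\le v_2$" conclusion of $(b)\Rightarrow(c)$ to a single version of $K_C$ for which monotonicity in $v$ holds simultaneously for all $v$ — this is precisely what forces the countable-dense-set plus right-continuity argument. Each of these steps is routine but must be handled carefully so that statement $(c)$ is genuinely a statement about one fixed version of the kernel.
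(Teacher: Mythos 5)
Your proof is correct and takes essentially the same route as the paper: both reduce the equivalence of (b) and (c) to the identity $\log C(u_2,v)-\log C(u_1,v)=\int_{(u_1,u_2]}K_C(s,[0,v])/C(s,v)\,\mathrm{d}\lambda(s)$ and then invoke the standard "non-negative integral over all intervals iff non-negative integrand a.e." argument. The only difference is cosmetic — the paper derives this identity via a quantile-function change of variables for $F_v(u):=C(u,v)$, whereas you obtain it from Lipschitz continuity and local absolute continuity of $u\mapsto\log C(u,v)$ — and your treatment of the a.e.-in-$u$ versus for-all-$u$ bookkeeping (countable dense set of pairs $v_1\le v_2$ plus right-continuity of the kernel) is, if anything, more careful than the paper's.
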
{}
\begin{proof}{}
The equivalence of (a) and (b) is straightforward.
%Assume that $C$ is TP2 and consider $(u_1,v_1), (u_2,v_2) \in (0,1)^2$ with $(u_1,v_1) \leq (u_2,v_2)$.
%Since $\log$ is non-decreasing we obtain 
%\begin{eqnarray*}
%  \lefteqn{\log(C(u_1,v_1)) + \log(C(u_2,v_2)) - \log(C(u_1,v_2)) - \log(C(u_2,v_1))}
%	\\
%	&   =  & \log(C(u_1,v_1) \cdot C(u_2,v_2)) - \log(C(u_1,v_2) \cdot C(u_2,v_1))
%	\\
%	& \geq & 0
%\end{eqnarray*}
%Now, assume that $\log \circ C$ is $2$-increasing and consider $(u_1,v_1), (u_2,v_2) \in (0,1)^2$ with $(u_1,v_1) \leq (u_2,v_2)$.
%Since $\exp$ is non-decreasing we obtain 
%\begin{eqnarray*}
%  \lefteqn{C(u_1,v_1) \cdot C(u_2,v_2) - C(u_1,v_2) \cdot C(u_2,v_1)}
%	\\
%	&   =  & \exp( \log( C(u_1,v_1) \cdot C(u_2,v_2) )) - \exp( \log( C(u_1,v_2) \cdot C(u_2,v_1)))
%	\\
%	&   =  & \exp( \log(C(u_1,v_1)) + \log(C(u_2,v_2))) - \exp( \log(C(u_1,v_2)) + \log(C(u_2,v_1)))
%	\\
%	& \geq & 0
%\end{eqnarray*}
%This proves the assertion.
Now, fix $v \in (0,1)$ and define $F_v: \I \to \mathbb{R}$ by letting $F_v(u) := C(u,v)$.
Then $F_v$ is a continuous distribution function with $\lambda$-density satisfying
$$
  F_v(u) 
	= \int_{(0,u]} K_C(s,[0,v]) \; \mathrm{d} \lambda(s)
$$
Thus, for every $u \in (0,1)$
\begin{eqnarray*}
  (\log \circ C)(u,v)
	& = & - \, \int_{(C(u,v),1]} \frac{1}{s} \; \mathrm{d} \lambda(s)
	\\ & = & \;\; - \, \int_{(0,1)} \mathds{1}_{(u,1]}(F_v^\leftarrow(s)) \frac{1}{(F_v \circ F_v^\leftarrow)(s)} \; \mathrm{d} \lambda(s)
	\\
	& = & - \, \int_{(u,1]} \frac{1}{C(s,v)} \; \mathrm{d} \lambda^{F_v^\leftarrow}(s)
	\;\; = \;\; - \, \int_{(u,1]} \frac{K_C(s,[0,v])}{C(s,v)} \; \mathrm{d} \lambda(s)
\end{eqnarray*}
Therefore,
$\log \circ C$ is $2$-increasing if and only if, for any $u \in (0,1)$ 
$$
  v \mapsto \frac{K_C(u,[0,v])}{C(u,v)} 
$$
is non-decreasing. This proves the result.
\end{proof}{}

It is well-known that if $\phi$ is a $2$-increasing and monotone (both coordinates in the same direction) function and $f$ is convex and non-decreasing, 
then the composition $f \circ \phi$ is $2$-increasing (see \cite[p.219]{marshallolkin2011}).
In the next lemma we modify the assumption on $f$ for the case $\phi$ is monotone in opposite direction.
Lemma \ref{App.Extension.MO} applies to Lemma \ref{EVC.logconcave}.

\begin{Lemma} \label{App.Extension.MO}
Suppose that $\phi: \Omega_2 \to \Omega_1$, $\Omega_2 \subseteq \R^2$, $\Omega_1 \subseteq \R$, is $2$-increasing, 
$x \mapsto \phi(x,y)$ is non-increasing, 
$y \mapsto \phi(x,y)$ is non-decreasing,
and $f:\Omega_1 \to \R$ is concave and non-decreasing.
Then $f \circ \phi$ is $2$-increasing. 
\end{Lemma}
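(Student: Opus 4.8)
The plan is to unwind the definition of $2$-increasingness for $f\circ\phi$ and reduce everything to a one-dimensional inequality about the four corner values of $\phi$. Fix $x_1\le x_2$ and $y_1\le y_2$ with $[x_1,x_2]\times[y_1,y_2]\subseteq\Omega_2$ and abbreviate $a:=\phi(x_1,y_1)$, $b:=\phi(x_2,y_2)$, $c:=\phi(x_1,y_2)$, $d:=\phi(x_2,y_1)$; the quantity whose non-negativity must be shown is $f(a)+f(b)-f(c)-f(d)$. From the hypotheses I would extract exactly four facts: $2$-increasingness of $\phi$ gives $a+b\ge c+d$; since $x\mapsto\phi(x,y)$ is non-increasing, $c\ge b$ and $a\ge d$; and since $y\mapsto\phi(x,y)$ is non-decreasing, $c\ge a$ and $b\ge d$. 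Writing $m:=\min\{a,b\}$ and $M:=\max\{a,b\}$, these combine to $d\le m\le M\le c$ together with $m+M=a+b\ge c+d$, and since $f(a)+f(b)=f(m)+f(M)$ it suffices to prove $f(m)+f(M)\ge f(c)+f(d)$.

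Next I would dispose of the degenerate cases: if $M=c$ the claim reduces to $f(m)\ge f(d)$, which holds because $m\ge d$ and $f$ is non-decreasing; and if $m=d$, then $m+M\ge c+d$ forces $M\ge c$, hence $M=c$ and we are back in the previous case. So assume $d<m\le M<c$. Here the only tool needed is the ``decreasing slopes'' property of a concave function: chaining the three-point concavity inequalities through $m$ and (if $m<M$) through $M$ yields
\[
  \frac{f(m)-f(d)}{m-d}\ \ge\ \frac{f(c)-f(M)}{c-M}.
\]
Now $m+M\ge c+d$ rewrites as $m-d\ge c-M>0$, and $f$ non-decreasing together with $M<c$ gives $f(c)-f(M)\ge 0$; hence
\[
  f(m)-f(d)\ \ge\ \frac{m-d}{c-M}\,\bigl(f(c)-f(M)\bigr)\ \ge\ f(c)-f(M),
\]
which is exactly $f(m)+f(M)\ge f(c)+f(d)$. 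This proves that $f\circ\phi$ is $2$-increasing.

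The argument is short, so the care lies entirely in the bookkeeping and in one conceptual point. On bookkeeping: the monotonicity hypotheses must be used with the right signs — non-increasingness in the first coordinate is precisely what produces $c\ge b$ and $a\ge d$, forcing the pair $(m,M)$ to lie \emph{inside} $[d,c]$ — and the degenerate cases $M=c$, $m=d$ must be handled separately because the slope inequality needs strict inequalities at the endpoints. On the conceptual side, the role of concavity here is genuinely opposite to the convexity used in \cite[p.219]{marshallolkin2011}: convexity would allow $M$ to be pushed toward $c$ and $m$ toward $d$, whereas concavity exploits that $m+M\ge c+d$ with $d\le m\le M\le c$ \emph{contracts} the pair toward the middle. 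The main pitfall I expect is the temptation to reduce this lemma to the cited ``monotone, same-direction'' result by reflecting one coordinate; this does not work, since any single reflection turns a $2$-increasing $\phi$ into a $2$-decreasing one, so the direct concavity estimate above is the cleanest route. (I would also remark that concavity of $f$ is only used at the four points $d,m,M,c\in\Omega_1$, so one need not assume $\Omega_1$ is an interval; in the intended application, Lemma~\ref{EVC.logconcave}, it is one anyway.)
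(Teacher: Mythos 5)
Your proof is correct, and it takes a genuinely different route from the paper's. The paper proves the lemma by passing to $\exp\circ f$, which is non-decreasing and log-concave, and invoking the classical fact (Schoenberg) that log-concavity of $g$ makes the translation kernel $(s,t)\mapsto g(s-t)$ TP2; the four corner values of $\phi$ are then encoded as differences $s_i-t_j$ by a careful choice of $s_1,s_2,t_1,t_2$, the TP2 inequality is applied, and a final monotonicity step absorbs the slack $s_1-t_1\le\phi(x_1,y_1)$ coming from the $2$-increasingness of $\phi$ (with the case $\phi(x_1,y_1)>\phi(x_2,y_2)$ treated symmetrically). You instead argue directly and elementarily: after ordering the corner values as $d\le m\le M\le c$ with $m+M\ge c+d$, the decreasing-slopes characterization of concavity chained through $m$ and $M$ gives $\frac{f(m)-f(d)}{m-d}\ge\frac{f(c)-f(M)}{c-M}$, and since $m-d\ge c-M>0$ and $f(c)-f(M)\ge 0$ this yields $f(m)-f(d)\ge f(c)-f(M)$, which is the claim; your separate treatment of the degenerate cases $M=c$ and $m=d$ is exactly where the paper's argument instead relies on monotonicity of $\exp\circ f$. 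Your version is self-contained, avoids the external citation, and (as you note) uses concavity only at the four relevant points; the paper's version is shorter on the page and ties the lemma to the TP2 machinery that pervades the rest of the article. Both are valid, and your remark that one cannot simply reflect a coordinate to reduce to the convex, same-direction result of Marshall--Olkin is accurate -- reflection flips $2$-increasingness -- so a separate argument of one kind or the other is indeed required.
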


\begin{proof}
For $x_1 \leq x_2$ and $y_1\leq y_2$ such that $[x_1,x_2] \times [y_1,y_2] \subseteq \Omega_2$ monotonicity of $\phi$ implies
\begin{align*}
    \phi(x_2,y_1) \leq \min\{ \phi(x_1,y_1), \phi(x_2,y_2) \} \leq \max\{ \phi(x_1,y_1), \phi(x_2,y_2) \} \leq \phi(x_1,y_2).
\end{align*}
and $2$-increasingness yields 
$\phi(x_1,y_2) + \phi(x_2,y_1) \leq \phi(x_1,y_1) + \phi(x_2,y_2) $.
\\
First, assume $\phi(x_1,y_1) < \phi(x_2,y_2)$, choose $s_2$ and $t_1 < t_2$ such that $s_2-t_1=\phi(x_1,y_2)$ and $s_2-t_2 = \phi(x_2,y_2)$, and set $s_1:=\phi(x_2,y_1) + t_2 < s_2$. Then (see Figure \ref{FigureFALogConcanve} for an illustration)
\begin{align*}
    s_1 - t_1 
		  &= \phi(x_2, y_1) + t_2 - t_1 + \phi(x_1,y_2) - s_2 + t_1 
    \\&= \phi(x_2, y_1) + \phi(x_1,y_2) + t_2  - s_2
    \\&\leq \phi(x_1, y_1) + \phi(x_2,y_2) + t_2  - s_2
    \\&= \phi(x_1, y_1)
\end{align*}
\begin{figure}[ht!]
        \begin{center}
            \begin{tikzpicture}[scale=4, domain=0:1]
	%\draw[step=.1cm, gray, very thin] (-2,-2) grid (2,2); 
	% Achsen
	\draw[-,line width=1] (-1.2,0) -- (1.2,0);	
	% x-Axis label
	\draw[-,line width=1] (1,0) -- (1,-0.02);
	\node[below=1pt of {(1,-0.02)}, scale= 0.75, outer sep=2pt,fill=white] {$\phi(x_1,y_2)$};	
	\draw[-,line width=1] (-1,0) -- (-1,-0.02);
	\node[below=1pt of {(-1,-0.02)}, scale= 0.75, outer sep=2pt,fill=white] {$\phi(x_2,y_1)$};
	\draw[-,line width=1] (-0.25,0) -- (-0.25,-0.02);
	\node[below=1pt of {(-0.25,-0.02)}, scale= 0.75, outer sep=2pt,fill=white] {$\phi(x_1,y_1)$};	
	\draw[-,line width=1] (0.5,0) -- (0.5,-0.02);
	\node[below=1pt of {(0.5,-0.02)}, scale= 0.75, outer sep=2pt,fill=white] {$\phi(x_2,y_2)$};	
	\draw[-,line width=1, dashed, color = red] (1,0.18) -- (1,0); 
	\node[above=1pt of {(1,0.18)}, scale= 0.75, outer sep=2pt,fill=white] {\textcolor{red}{$s_2-t_1$}};	
	\draw [->, color=red] (1,0) to [out=90,in=90] (0.5,0);	
	\draw[-,line width=1, dashed, color = red] (0.5,0.18) -- (0.5,0); 
	\node[above=1pt of {(0.5,0.18)}, scale= 0.75, outer sep=2pt,fill=white] {\textcolor{red}{$s_2-t_2$}};	
	\draw [decorate,decoration = {brace,mirror,amplitude=10pt}] (0.5,0) --  (1,0);
	\draw [decorate,decoration = {brace,mirror,amplitude=10pt}] (-1,0) --  (-0.5,0);
	\draw [->, color=blue] (0.75,-0.1) to [out=-90,in=-90] (-0.75,-0.1);	
	\draw[-,line width=1, dashed, color = red] (-0.5,0.18) -- (-0.5,0); 
	\node[above=1pt of {(-0.5,0.18)}, scale= 0.75, outer sep=2pt,fill=white] {\textcolor{red}{$s_1-t_1$}};
	\draw[-,line width=1, dashed, color = red] (-1,0.18) -- (-1,0); 
	\node[above=1pt of {(-1,0.18)}, scale= 0.75, outer sep=2pt,fill=white] {\textcolor{red}{$s_1-t_2$}};
	\end{tikzpicture}
	\caption{Illustration how the specific points are chosen and why $s_1 - t_1 \leq \phi(x_1,y_1)$ holds in the case $\phi(x_1,y_1) < \phi(x_2,y_2)$.} \label{FigureFALogConcanve}
        \end{center}
\end{figure}
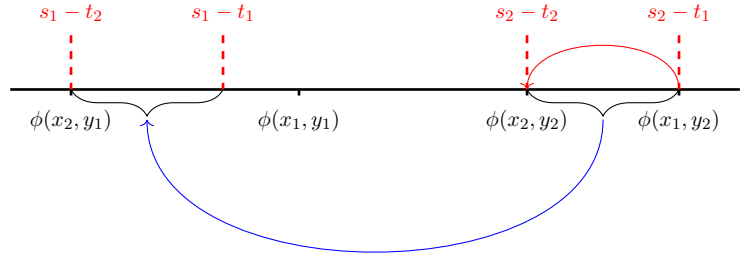
By assumption $\exp\circ f$ is (non-negative), log-concave and non-decreasing. 
Analogous to \cite{schoenberg1951} (see also \cite[Example A.10]{marshallolkin2011}) log-concavity implies that the function $(s,t) \mapsto (\exp \circ f) (s-t)$ is TP2 which gives
\begin{align*}
    (\exp \circ f)(\phi(x_1, y_2)) \cdot (\exp \circ f)(\phi(x_2, y_1)) 
		  &  =  (\exp \circ f)(s_2-t_1) \cdot (\exp \circ f)(s_1-t_2) 
		\\&\leq (\exp \circ f)(s_2-t_2) \cdot (\exp \circ f)(s_1-t_1) 
		\\&  =  (\exp \circ f)(\phi(x_2, y_2)) \cdot (\exp \circ f)(s_1-t_1)
\end{align*}
and monotonicity of $\exp \circ f$ yields
\begin{align*}
    (\exp \circ f)(\phi(x_1, y_2)) \cdot (\exp \circ f)(\phi(x_2, y_1))  
		\leq (\exp \circ f)(\phi(x_2, y_2)) \cdot (\exp \circ f)(\phi(x_1, y_1))
\end{align*}
This is equivalent to $f \circ \phi$ being $2$-increasing. The case $\phi(x_1, y_1) > \phi(x_2, y_2)$ follows analogously. This proves the desired assertion.
\end{proof}

We now present some auxiliary results that are used in Section \ref{Sect.EVC}.
Lemma \ref{LemmaCase2Discontinuous} provides a structure for $F_A$ that contradicts the MK-TP2 property.

\begin{Lemma}\label{LemmaCase2Discontinuous}
Suppose that $F_A$ has discontinuity point $t_r \in (0,1)$ and there exists some $t_l\in(0,t_r)$ 
such that $F_A(t) > 0$ for all $t\in[t_l,t_r)$ and $t \mapsto F_A(t)$ is continuous on $[t_l,t_r)$. 
Then the corresponding EVC is not MK-TP2.
\end{Lemma}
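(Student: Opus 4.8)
The plan is to exploit a discontinuity (jump) of $F_A$ at $t_r$ by placing a $2\times 2$ rectangle of evaluation points whose $h$-values straddle $t_r$ in such a way that the ratio $I_1 \cdot I_2$ in the MK-TP2 inequality \eqref{MKTP2.Def} is forced below $1$. Recall from \eqref{Eq.EVC.MK} that for $u,v\in(0,1)$ we have $K_A(u,[0,v]) = \frac{C(u,v)}{u}\,F_A(h(u,v))$, where $h(u,v)=\log(u)/\log(uv)$; also recall that $u\mapsto h(u,v)$ is strictly decreasing and $v\mapsto h(u,v)$ strictly increasing, and that $h(u,f_t(u))=t$ with $f_t(u)=u^{(1-t)/t}$. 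Since $F_A$ is non-decreasing with a jump at $t_r$, there are $\delta>0$ and a left neighbourhood $[t_l,t_r)$ on which $F_A$ is continuous, positive, and satisfies $F_A(t)\le F_A(t_r-)=:c_- < c_+:=F_A(t_r+)\le F_A(t_r-)+$ the jump size.

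First I would fix a point $u_1\in(0,1)$ close to $1$ and, for the four corners, choose the $v$-coordinates so that the four $h$-values behave as follows: $h(u_1,v_2)$ sits just to the right of $t_r$ (so $F_A$ there is $\geq c_+$), while $h(u_2,v_1)$, $h(u_1,v_1)$, $h(u_2,v_2)$ all sit just to the left of $t_r$ inside $[t_l,t_r)$ (so $F_A$ there is $\le c_-$, and by continuity can be made as close to $c_-$ as desired). Concretely, pick $t_l < s < t_r$ with $s$ near $t_r$, set $v_1 := f_s(u_2)$ for a suitable $u_2\in(u_1,1)$ and $v_2 := f_{t_r+\varepsilon}(u_1)$ for small $\varepsilon>0$; by monotonicity of $h$ one then gets $h(u_1,v_1)\le r_1$ and $h(u_2,v_2)\le r_2$ for values $r_1,r_2\in(s,t_r)$, exactly as in the construction in the proof of Lemma \ref{NecessaryCondition}. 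The factor $I_2$ then has numerator involving $F_A$ at three points $\le c_-$ and denominator involving one value $\ge c_+$, so $I_2 \le c_-^{\,2}/(c_- \cdot c_+) \cdot (\text{something} \le c_-/c_-) $; more carefully, $I_2 \to F_A(r_1)F_A(r_2)/(F_A(t_r+\varepsilon)F_A(s))$, which as $\varepsilon\to0$ and $s,r_1,r_2\to t_r-$ tends to $c_-^2/(c_+ c_-) = c_-/c_+ < 1$.

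The remaining work is to control the factor $I_1 = \frac{C(u_1,v_1)C(u_2,v_2)}{C(u_1,v_2)C(u_2,v_1)}$. Using $C(u,v)=(uv)^{A(h(u,v))}$, one writes $\log I_1$ explicitly in terms of $u_1,u_2$ and the exponents $A$ evaluated at the four $h$-values; since every EVC is TP2 we know a priori $I_1\ge1$, but we need a quantitative upper bound so that $I_1\cdot I_2<1$. The key device, just as in Lemma \ref{NecessaryCondition}, is to send $u_2\to1$: one replaces $u_2$ by $u_{2,n}=1-1/n$ and checks $\lim_{n\to\infty}\frac{C(u_{2,n},v)}{u_{2,n}}=v$, so $\lim_n K_A(u_{2,n},[0,v]) = v\,F_A(0^+)$ — but here $F_A(0)=1+D^+A(0)$ need not vanish, so one instead keeps $u_2$ fixed and chooses $u_1$ (equivalently the scale of all the $u^{\cdot}$ exponents) to make the $u$-power in $I_1$ arbitrarily close to $1$; this is where one exploits that $\log u_1<0$ can be taken as small as one likes while the $t$-values stay pinned near $t_r$. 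Then $I_1$ is pushed to a limit of the form $u_1^{\text{(bounded exponent)}}\to1$, and combined with $I_2\to c_-/c_+<1$ gives a rectangle violating \eqref{MKTP2.Def}. I expect the bookkeeping of the four exponents in $I_1$ — and verifying that the offending exponent stays bounded as the $t$-values converge to $t_r$ — to be the main obstacle; once that boundedness is in hand, a continuity argument (as in the last lines of the proof of Lemma \ref{NecessaryCondition}) finishes the job.
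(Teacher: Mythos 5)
Your plan is correct and shares the paper's core construction: a $2\times 2$ rectangle whose four $h$-values straddle the jump of $F_A$ at $t_r$, with three of them confined to a left neighbourhood $[t^\ast,t_r)$ on which $F_A$ is continuous, positive and bounded above by $y:=\sup_{t<t_r}F_A(t)$, and the fourth, $h(u_1,v_2)$, at or beyond $t_r$ where $F_A\geq y+\delta$; choosing $t^\ast$ close enough to $t_r$ then forces $I_2\leq y^2/\bigl((y+\delta)(y-\varepsilon)\bigr)<1$, exactly as in the paper. Where you genuinely diverge is in neutralizing the TP2-favorable factor $I_1\geq 1$. You scale the whole configuration toward $(1,1)$ with the $h$-values held fixed; writing $a_i=-\log u_i$ one gets $\log C(u_i,v_j)=-a_i\,A(h_{ij})/h_{ij}$ and hence $\log I_1=a_1\bigl(A(h_{12})/h_{12}-A(h_{11})/h_{11}\bigr)+a_2\bigl(A(h_{21})/h_{21}-A(h_{22})/h_{22}\bigr)$, so the ``bounded exponent'' you flag as the main obstacle is in fact unproblematic: the $h$-values stay in a compact subset of $(0,1)$ and $A\in[1/2,1]$, so $\log I_1\to 0$ as $u_1\to 1$ while $I_2$ stays fixed below $1$. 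The paper instead fixes $u_1$ and exploits continuity of $v\mapsto C(u_2,v)$: it picks $v_1$ close enough to $v_2$ that $C(u_2,v_1)>\beta\, C(u_2,v_2)$ with $\beta:=y^2/\bigl((y+\delta)(y-\varepsilon)\bigr)$, and then bounds the full ratio by $\tfrac{C(u_1,v_1)}{C(u_1,v_2)}\cdot\beta\,\tfrac{C(u_2,v_2)}{C(u_2,v_1)}<1$ without ever computing $I_1$ -- a shorter route that avoids the exponent bookkeeping, whereas yours makes the mechanism ($I_1\to 1$ under rescaling) more explicit. One detail you should not gloss over: monotonicity of $h$ alone only places $h(u_1,v_1)$ and $h(u_2,v_2)$ below $h(u_1,v_2)$, not below $t_r$; pinning them inside $[t^\ast,t_r)$ requires a quantitative choice of $u_2$ of the type made in Lemma \ref{LemmaLinearNotTP2Inequality} (the paper's own proof sidesteps this by first extracting a rectangle $[u_1,u^\ast]\times[v^\ast,v_2]$ on which $h\geq t^\ast$ and $h<t_r$ away from the corner $(u_1,v_2)$).
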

\begin{proof}
In what follows we construct a rectangle $[u_1,u_2] \times [v_1,v_2]$ for which inequality \eqref{MKTP2.Def} is not fulfilled (see Figure \ref{FigureAppendix1} for an illustration). 
\\
Recall that $F_A$ is non-decreasing.
First of all, discontinuity of $F_A$ in $t_r$ implies the existence of some $\delta \in (0,1)$ such that 
$F_A(t_r) = \delta + y$ where $y:= \sup_{t<t_r} F_A(t) > 0$, and choose $\varepsilon \in \bigl(0, \delta y/ (\delta + y) \bigr)$.
Since $F_A$ is continuous on $[t_l,t_r)$ there exists some $t^\ast \in [t_l,t_r)$ such that
\begin{align*}
  0 < y -\varepsilon \leq F_A(t) \leq y \qquad \text{ for all } t \in [t^\ast,t_r)
\end{align*}
where the first inequality follows from the fact that $y >  \delta y/ (\delta + y)$.
\begin{figure}[ht!]
        \begin{center}
            \begin{tikzpicture}[scale=4, domain=0:1]
	%\draw[step=.1cm, gray, very thin] (-2,-2) grid (2,2); 
	% Achsen
	\draw[->,line width=1] (-0.2,0) -- (1.2,0);
	\draw[->,line width=1] (0,-0.2) -- (0,1.2);
	% Einheitsquadrat
	\draw[-,line width=1] (0,1) -- (1,1);
	\draw[-,line width=1] (1,0) -- (1,1);
	% x-Axis label
	\draw[-,line width=1] (1,0) -- (1,-0.02);
	\node[below=1pt of {(1,-0.02)}, scale= 0.75, outer sep=2pt,fill=white] {$1$};
	% y-Axis label
	\draw[-,line width=1] (0,1) -- (-0.02,1);
	\node[left=1pt of {(-0.02,1)}, scale= 0.75, outer sep=2pt,fill=white] {$1$};
	% f_t concave
	\draw[scale = 1, color=blue]   plot[samples=100] (\x,{\x^0.5});
	% f_t convex
	\draw[scale = 1, color=blue]   plot[samples=100] (\x,{\x^3.5});
	\draw[scale = 1, color=blue]   plot[samples=100] (\x,{\x^2});
	% Choosen Points
	\draw[-,line width=1] (0.5,0) -- (0.5,-0.02);
	\node[below=1pt of {(0.5,-0.02)}, scale= 0.75, outer sep=2pt,fill=white] {$u_1$};
	\draw[-,line width=1] (0.6,0) -- (0.6,-0.02);
	\node[below=1pt of {(0.6,-0.02)}, scale= 0.75, outer sep=2pt,fill=white] {$u_2$};
	\draw[-,line width=1] (0.7,0) -- (0.7,-0.02);
	\node[below=1pt of {(0.7,-0.02)}, scale= 0.75, outer sep=2pt,fill=white] {$u^\ast$};
	\draw[-,line width=1] (0,0.7) -- (-0.02,0.7);
	\node[left=1pt of {(-0.02,0.7)}, scale= 0.75, outer sep=2pt,fill=white] {$v_2$};
	\draw[-,line width=1] (0,0.6) -- (-0.02,0.6);
	\node[left=1pt of {(-0.02,0.6)}, scale= 0.75, outer sep=2pt,fill=white] {$v_1$};
	\draw[-,line width=1] (0,0.49) -- (-0.02,0.49);
	\node[left=1pt of {(-0.02,0.49)}, scale= 0.75, outer sep=2pt,fill=white] {$v^\ast$};
	% dashed lines
	\draw[-,line width=1, dashed] (0,0.7) -- (0.6,0.7); %v_2
	\draw[-,line width=1, dashed] (0,0.6) -- (0.6,0.6); %v_1
	\draw[-,line width=1, dashed] (0,0.49) -- (0.7,0.49); %v^\ast
	\draw[-,line width=1, dashed] (0.5,0) -- (0.5,0.7); %u_1
	\draw[-,line width=1, dashed] (0.6,0) -- (0.6,0.7); %u_2
	\draw[-,line width=1, dashed] (0.7,0) -- (0.7,0.49); %u^\ast
	% Points
	\node at (0.5,0.6)[circle,fill,inner sep=1.5pt, red]{};
	\node at (0.6,0.6)[circle,fill,inner sep=1.5pt, red]{};
	\node at (0.5,0.7)[circle,fill,inner sep=1.5pt, red]{};
	\node at (0.6,0.7)[circle,fill,inner sep=1.5pt, red]{};
	\node at (0.7,0.49)[circle,fill,inner sep=1.5pt, green]{};
	% text for contour lines
	\node[right=2pt of {(0.8,0.45)}, scale= 0.8, outer sep=2pt,fill=white] {$f_{t_l}$};
	\node[above=2pt of {(0.45,0.72)}, scale= 0.8, outer sep=2pt,fill=white] {$f_{t_r}$};
	\node[above=2pt of {(0.7,0.57)}, scale= 0.8, outer sep=2pt,fill=white] {$f_{t^\ast}$};
	\end{tikzpicture}
	\caption{Blue lines are the contour lines of $f_{t_r}, f_{t^\ast}$ and $f_{t_l}$ where $t_l < t^\ast < t_r$.}\label{FigureAppendix1}
        \end{center}
\end{figure}
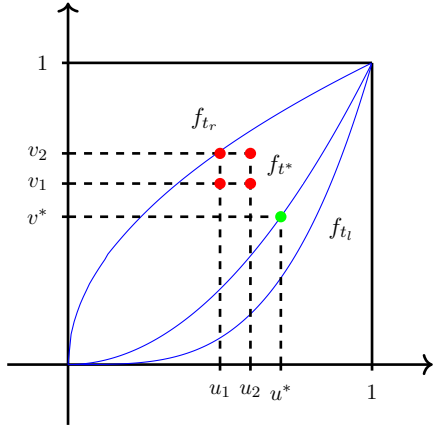
Now, 
let $u_1 \in (0,1)$ and set $v_2 := f_{t_r}(u_1) = u_1^{\frac{1}{t_r}-1} \in (0,1)$.
Then we immediately obtain $h(u_1,v_2) = t_r$ and monotonicity of $h$ implies
$$
        h(u,v) < \min\{h(u,v_2), h(u_1,v)\} \leq \max\{h(u,v_2), h(u_1,v)\} < h(u_1,v_2) = t_r
$$
for all $u \in (u_1,1)$ and all $v \in (0,v_2)$.
Since h is continuous and has convex or concave contour lines respectively, 
there exist $u^\ast \in (u_1,1)$ as well as $v^\ast \in (0, v_2)$ such that
$h(u,v) \geq t^\ast$ for all $(u,v) \in [u_1,u^\ast]\times[v^\ast,v_2]$ and hence
$$
  0 < y -\varepsilon \leq F_A(h(u,v)) \leq y \qquad \text{ for all } (u,v) \in ([u_1,u^\ast]\times[v^\ast,v_2])\backslash \{(u_1,v_2)\}.
$$
We therefore have
\begin{align*}
  \sup\limits_{\substack{u \in (u_1,u^\ast) \\ v \in (v^\ast,v_2)}} \frac{F_A(h(u_1,v)) F_A(h(u,v_2))}{F_A(h(u_1,v_2)) F_A(h(u,v))} 
	&\leq \frac{y^2}{(\delta+y)(y-\varepsilon)} 
	  \\&<  \frac{y^2}{(\delta+y)(y-\frac{\delta y}{ \delta + y})} 
		=  \frac{y^2}{y(\delta+y) - \delta y} = 1
\end{align*}
Setting $\beta:= y^2/[(y+\delta)(y-\varepsilon)] < 1$ we then obtain
$$
  \sup\limits_{\substack{u \in (u_1,u^\ast) \\ v \in (v^\ast,v_2)}} \frac{F_A(h(u_1,v)) F_A(h(u,v_2))}{F_A(h(u_1,v_2)) F_A(h(u,v))} 
	\leq \beta
$$
and continuity of copulas implies the existence of some $u_2 \in (u_1,u^\ast)$ and some $v_1 \in (v^\ast,v_2)$ such that $C(u_2,v_1) > \beta \cdot C(u_2,v_2)$.
\\
We are now in the position to show that inequality \eqref{MKTP2.Def} for the chosen rectangle $[u_1,u_2] \times [v_1,v_2]$ fails to hold.
First note that $K_C(u_2,[0,v_1])= C(u_2,v_1)/u_2 \cdot F_A(h(u_2,v_1))>0$, hence
\begin{eqnarray*}
  \frac{K_C(u_1,[0,v_1]) \, K_C(u_2,[0,v_2])}{K_C(u_1,[0,v_2]) \, K_C(u_2,[0,v_1])}
	&   =  & \frac{C(u_1,v_1)F_A(h(u_1,v_1)) \, C(u_2,v_2)F_A(h(u_2,v_2))}{C(u_1,v_2)F_A(h(u_1,v_2)) \, C(u_2,v_1)F_A(h(u_2,v_1))}
	\\*
	& \leq & \frac{C(u_1,v_1)}{C(u_1,v_2)} \, \beta \, \frac{C(u_2,v_2)}{C(u_2,v_1)}
	\\*
	&   <  & \frac{C(u_1,v_1)}{C(u_1,v_2)}
	\\*
	& \leq & 1
\end{eqnarray*}
This proves the assertion.
\end{proof}

Next, consider some Pickands dependence function $A$ with $A \neq 1$.
Since $A$ is convex it attains its minimum on $(0,1)$, 
i.e. there exists some $\delta \in (0,\infty)$ such that 
$$
  \delta 
	= \argmin_{\gamma \in (0,\infty)} A\left(\frac{1}{1+\gamma}\right)
$$
Since there may exist several minimizer we set
$$
 \beta_A 
:= \sup \Big\{ \delta\in(0,\infty): \delta = \argmin\limits_{\gamma \in (0,\infty)} A\left(\frac{1}{1+\gamma}\right) \Big\} 
$$
Lemma \ref{LemmaCase2Continuous} simplifies the proof of Lemma \ref{NecessaryCondition} in the continuous case.

\begin{Lemma}\label{LemmaCase2Continuous}
Suppose that $D^+A$ is continuous with $A \neq 1$ and define the mapping $g: [0,\beta_A] \to \R$ by
$$
  g(\alpha):= (1+\alpha)A\left(\frac{1}{1+\alpha}\right) - \alpha
$$
Then
\begin{enumerate} \itemsep=0mm
\item[(1)]
$g$ is strictly decreasing on $(0,\beta_A)$.

\item[(2)]
$g(0) = 1$ and $g(\alpha) \geq 0$ for all $\alpha \in (0,\beta_A)$.

\item[(3)]
the inequality 
$$
  F_A\left(\frac{1}{1+\beta_A}\right) 
	< F_A\left(\frac{1}{1+\alpha}\right)
$$
%the identity 
%$$
%  g(\alpha) - \alpha g'(\alpha)
%	= F_A\left(\frac{1}{1+\alpha}\right)
%$$
holds for all $\alpha \in(0,\beta_A)$.

\item[(4)]
the inequality 
$$
  \frac{\log\left(\frac{F_A\left(\frac{1}{1+\beta_A}\right)}{F_A\left(\frac{1}{1+\alpha}\right)}\right)}{g(\alpha) -g(\beta_A)} < 0
$$
%$$
%  \frac{\log\left(\frac{F_A\left(\frac{1}{1+\beta}\right)}{F_A\left(\frac{1}{1+\alpha}\right)}\right)}{g(\alpha) -g(\beta)} < 0
%$$
holds for all $\alpha \in(0,\beta_A)$.
\end{enumerate}
\end{Lemma}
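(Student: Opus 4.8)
The plan is to push everything down to the level of the Pickands function $A$ via the order-reversing substitution $t=1/(1+\alpha)$ (equivalently $\alpha=(1-t)/t$). Under it, $t^\ast:=1/(1+\beta_A)$ is an extremal minimiser of $A$ on $(0,1)$ and $\alpha\in(0,\beta_A)$ corresponds to $t\in(t^\ast,1)$, and a one-line computation gives the identity
$$ g(\alpha)=\frac{A(t)-(1-t)}{t}=\frac{B(t)}{t},\qquad t=\tfrac{1}{1+\alpha}, $$
where $B(t):=A(t)-(1-t)$ is convex (a convex function minus an affine one), non-negative by the Pickands bound $A(t)\ge 1-t$, with $B(0)=A(0)-1=0$ and $B(1)=A(1)=1$. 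All four assertions will be read off from this identity.

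Parts (1) and (2) are then quick. For (2): $\alpha=0$ corresponds to $t=1$, so $g(0)=B(1)/1=A(1)=1$; and $g(\alpha)=B(t)/t\ge 0$ because $B\ge 0$ and $t>0$. For (1): since $B$ is convex with $B(0)=0$, the chord-slope function $t\mapsto B(t)/t$ is non-decreasing on $(0,1)$, so $g$ is non-increasing on $(0,\beta_A)$ after the order-reversing substitution. For strictness on $(0,\beta_A)$ I would argue by contradiction: if $B(t)/t$ were constant on some $[t_a,t_b]\subseteq(t^\ast,1)$, an elementary convexity argument (comparing the one-sided derivatives $D^-B(t_a)\le D^+B(t_a)$) forces $B$ to coincide with a linear function $t\mapsto ct$ on all of $[0,t_b]$, i.e. $A\equiv 1-(1-c)t$ on $[0,t_b]$ with $c=1+D^+A(0)$; here $c<1$, since $c=1$ would give $D^+A(0)=0$ and hence $A\equiv 1$, contrary to $A\neq 1$. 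But then $A$ is strictly decreasing on $[0,t_b]$, so it has no minimiser before $t_b$, contradicting $t^\ast<t_a<t_b$. Hence $t\mapsto B(t)/t$ is strictly increasing on $(t^\ast,1)$, i.e. $g$ is strictly decreasing on $(0,\beta_A)$.

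For (3) the crucial observation is that continuity of $D^+A$ forces $D^+A(t^\ast)=0$: as $t^\ast$ is an interior minimiser of $A$ we have $D^-A(t^\ast)\le 0\le D^+A(t^\ast)$, while continuity and monotonicity of $D^+A$ give $D^-A(t^\ast)=D^+A(t^\ast)$, so both vanish and $F_A(1/(1+\beta_A))=F_A(t^\ast)=A(t^\ast)=\min_{(0,1)}A$, which is at least $1/2$. For $\alpha\in(0,\beta_A)$, set $t:=1/(1+\alpha)>t^\ast$; since $A$ is non-decreasing and $D^+A\ge D^+A(t^\ast)=0$ on $[t^\ast,1]$, we get $F_A(t)=A(t)+(1-t)D^+A(t)\ge A(t^\ast)=F_A(t^\ast)$, and the extremality of $t^\ast$ built into $\beta_A$ (so that $A$, and hence $F_A$, sits strictly above its minimal value just past $t^\ast$) upgrades this to the strict inequality in (3). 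This strictness is the step I expect to be the main obstacle: the monotonicity estimate only yields "$\ge$", and one must rule out $F_A$ remaining at its minimal value on a whole interval beyond $t^\ast$ — which is exactly where the supremum in the definition of $\beta_A$, together with continuity of $D^+A$, is needed.

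Part (4) is then immediate: by (3) the numerator $\log\bigl(F_A(1/(1+\beta_A))/F_A(1/(1+\alpha))\bigr)$ is the logarithm of a ratio of two positive numbers strictly less than $1$, hence negative, while by (1) the denominator $g(\alpha)-g(\beta_A)$ is strictly positive; so the quotient is negative for every $\alpha\in(0,\beta_A)$.
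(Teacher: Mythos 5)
Your reduction to the convex function $B(t)=A(t)-(1-t)$ via the order-reversing substitution $t=1/(1+\alpha)$, with the identity $g(\alpha)=B(t)/t$, is correct, and parts (1), (2) and (4) go through. Your chord-slope argument for the monotonicity of $t\mapsto B(t)/t$, together with the contradiction argument ruling out a constant chord slope on a subinterval of $(t^\ast,1)$, is a valid and derivative-free replacement for the paper's direct computation $g'(\alpha)=\bigl(A(1/(1+\alpha))-1\bigr)-D^+A(1/(1+\alpha))/(1+\alpha)<0$; part (2) is the same one-line computation in both; and your observation that continuity of $D^+A$ forces $D^+A(t^\ast)=0$, hence $F_A(1/(1+\beta_A))=A(t^\ast)=\min A\geq 1/2>0$, is exactly the paper's first step in (3). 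Part (4) then follows from (1) and (3) as you say.

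The one genuine gap is the strictness in (3), and it is a directional error. Because $\gamma\mapsto 1/(1+\gamma)$ is order-reversing, the supremum in the definition of $\beta_A$ makes $t^\ast=1/(1+\beta_A)$ the \emph{smallest} minimiser of $A$ in the $t$-variable, not the largest. Consequently $A$ sits strictly above its minimum just \emph{before} $t^\ast$, whereas just past $t^\ast$ --- which is precisely where $t=1/(1+\alpha)$ lies for $\alpha\in(0,\beta_A)$ --- it may remain at its minimum. If the minimising set of $A$ is a nondegenerate interval $[t^\ast,t^{\ast\ast}]$ (which is compatible with $D^+A$ continuous and $A\neq 1$), then for $t\in(t^\ast,t^{\ast\ast})$ one has $A(t)=A(t^\ast)$ and $D^+A(t)=0$, so $F_A(t)=A(t^\ast)=F_A(t^\ast)$ and the claimed strict inequality fails. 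Your parenthetical ``$A$ sits strictly above its minimal value just past $t^\ast$'' is therefore false in exactly the case you flagged as the obstacle; when the minimiser is unique the strictness does hold, but then it comes from uniqueness, not from the supremum in the definition of $\beta_A$. To be fair, the paper's own proof simply writes the strict inequality at this point with no justification either, and the statement is only safe when the flat part of $A$ is avoided; as used in Lemma \ref{NecessaryCondition} one only needs a single suitable $\alpha$, for which $t=1/(1+\alpha)$ can be chosen outside any interval on which $A$ is constant. So you located the crux correctly but closed it with an argument that does not work.
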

\begin{proof}
First of all, the derivative of $g$ on $(0,\beta_A)$ fulfills
$$
  g'(\alpha) 
	= \underbrace{\left( A\left(\frac{1}{1+\alpha}\right)-1\right)}_{< 0} - \underbrace{\frac{D^+A\left(\frac{1}{1+\alpha}\right)}{1+\alpha}}_{\geq 0}
	< 0
$$
for all $\alpha \in (0,\beta_A)$ which implies that $g$ is strictly decreasing on $(0,\beta_A)$.
This proves (1), and 
(2) is immediate from the fact that $A(t) \geq 1-t$ for all $t \in \I$.
Now, we prove (3) and (4).
%\begin{eqnarray*}
%  g(\alpha) - \alpha g'(\alpha)
%	& = & (1+\alpha)A\left(\frac{1}{1+\alpha}\right) - \alpha - \alpha \left( A\left(\frac{1}{1+\alpha}\right)-1\right) + \alpha %\frac{D^+A\left(\frac{1}{1+\alpha}\right)}{1+\alpha}
%	\\
%	& = & A\left(\frac{1}{1+\alpha}\right) + \left(1-\frac{1}{1+\alpha}\right) D^+A\left(\frac{1}{1+\alpha}\right)
%	\\
%	& = & F_A\left(\frac{1}{1+\alpha}\right)
%\end{eqnarray*}
Continuity of $D^+A$ implies
\begin{align*}
  F_A\left(\frac{1}{1+\beta_A}\right) 
	& = A\left(\frac{1}{1+\beta_A}\right) + \left(1-\frac{1}{1+\beta_A}\right) D^+A\left(\frac{1}{1+\beta_A}\right) 
	\\
	& = A\left(\frac{1}{1+\beta_A}\right) 
	\\
	& < A\left(\frac{1}{1+\alpha}\right) + \left(1-\frac{1}{1+\alpha}\right) D^+A \left(\frac{1}{1+\alpha}\right) 
	\\
	& = F_A\left(\frac{1}{1+\alpha}\right)
\end{align*}
%and therefore the convexity of $A$ yields $F_A\left(\frac{1}{1+\beta}\right) < F_A\left(\frac{1}{1+\alpha}\right)$
for all $\alpha \in (0,\beta_A)$. 
Since $F_A \big(1/(1+\beta_A)\big) = A\big(1/(1+\beta_A)\big) > 0 $ we therefore obtain 
$\log \big(F_A\big(1/(1+\beta_A)\big)\big) < \log \big( F_A\big(1/(1+\alpha)\big)\big)$ 
and hence 
$$
  \frac{\log\left(F_A\left(\frac{1}{1+\beta_A}\right)\right) - \log\left(F_A\left(\frac{1}{1+\alpha}\right)\right)}{g(\alpha) -g(\beta_A)} < 0
$$
This proves the assertion.
\end{proof}

Lemma \ref{LemmaFAConstantNotMKTP2} provides another structure for $F_A$ that contradicts the MK-TP2 property.
Its proof is given below and employs the results presented in Lemma \ref{LemmaLinearNotTP2Equality} and Lemma \ref{LemmaLinearNotTP2Inequality}.

\begin{Lemma}\label{LemmaLinearNotTP2Equality}
For every $a\in\R$ the identity
\begin{align*}
  u_1^{a(h(u_1,v_1)-h(u_1,v_2))} v_1^{a(h(u_1,v_1)-h(u_2,v_1))} u_2^{a(h(u_2,v_2)-h(u_2,v_1))} v_2^{a(h(u_2,v_2)-h(u_1,v_2))} = 1
\end{align*}
holds for all $0 < u_1 \leq u_2 < 1$ and all $0 < v_1 \leq v_2 < 1$.
\end{Lemma}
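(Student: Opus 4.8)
The plan is to take logarithms, turning the claimed multiplicative identity into an additive one that collapses after a short bookkeeping argument. Since every exponent carries the common factor $a$, it suffices to establish the case $a=1$; the general identity then follows by raising the $a=1$ identity to the power $a$. Write $p_i := \log u_i$ and $q_j := \log v_j$; because $u_i,v_j\in(0,1)$ these are strictly negative, so $p_i+q_j<0\neq 0$ and every quantity $h(u_i,v_j)=\log(u_i)/\log(u_iv_j)=p_i/(p_i+q_j)$ (hence every exponent) is well defined. Taking $\log$ of the left-hand side of the asserted identity, the claim reduces to $S=0$, where
\[
  S := p_1\Bigl(\tfrac{p_1}{p_1+q_1}-\tfrac{p_1}{p_1+q_2}\Bigr)
     + q_1\Bigl(\tfrac{p_1}{p_1+q_1}-\tfrac{p_2}{p_2+q_1}\Bigr)
     + p_2\Bigl(\tfrac{p_2}{p_2+q_2}-\tfrac{p_2}{p_2+q_1}\Bigr)
     + q_2\Bigl(\tfrac{p_2}{p_2+q_2}-\tfrac{p_1}{p_1+q_2}\Bigr).
\]

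Next I would regroup $S$ by the four fractions $\tfrac{p_i}{p_i+q_j}$, $(i,j)\in\{1,2\}^2$, and read off their coefficients: $\tfrac{p_1}{p_1+q_1}$ occurs with coefficient $p_1+q_1$, $\tfrac{p_1}{p_1+q_2}$ with coefficient $-(p_1+q_2)$, $\tfrac{p_2}{p_2+q_1}$ with coefficient $-(p_2+q_1)$, and $\tfrac{p_2}{p_2+q_2}$ with coefficient $p_2+q_2$. The key observation is that in each case the coefficient is exactly the denominator of the fraction it multiplies, so the four contributions simplify to $p_1$, $-p_1$, $-p_2$, $p_2$ respectively, whence $S=p_1-p_1-p_2+p_2=0$. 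Exponentiating gives the stated identity.

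There is essentially no hard step here; the two things to be careful about are (i) checking that $u_i,v_j\in(0,1)$ really does rule out a vanishing denominator (done above), and (ii) keeping the signs straight when collecting the coefficients of the four fractions. One could equally phrase the computation purely multiplicatively — each factor is of the form $u_i^{\pm(p_i+q_j)/(p_i+q_j)}=u_i^{\pm 1}$, and these cancel in pairs — but the logarithmic route makes the cancellation most transparent and is what I would write up.
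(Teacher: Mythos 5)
Your proof is correct and follows essentially the same route as the paper: take logarithms, reduce to $a=1$, regroup the sum by the four values $h(u_i,v_j)$, and observe that each coefficient equals the corresponding denominator $\log(u_iv_j)$, so the sum telescopes to $\log(u_1)-\log(u_1)-\log(u_2)+\log(u_2)=0$. No discrepancies to report.
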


\begin{proof}
For simplicity we use the following abbreviation: $h_{ij} := h(u_i,v_j)$. 
W.l.o.g. assume $a=1$. 
Then straightforward calculation yields
\begin{eqnarray*}
  \lefteqn{\log \big( u_1^{h_{11}-h_{12}} v_1^{h_{11}-h_{21}} u_2^{h_{22}-h_{21}} v_2^{h_{22}-h_{12}} \big)}
	\\
	& = & (h_{11} - h_{12}) \log(u_1) + (h_{11} - h_{21}) \log(v_1) + (h_{22} - h_{21}) \log(u_2) \\&&+\, ( h_{22} - h_{12}) \log(v_2) 
	\\
	& = & h_{11}(\log(u_1) + \log(v_1)) - h_{12} (\log(u_1) + \log(v_2)) 
	\\&&-\, h_{21} (\log(u_2)) + \log(v_1)) + h_{22} (\log(u_2)) + \log(v_2)) 
	\\
	& = & \log(u_1) - \log(u_1) - \log(u_2) + \log(u_2) 
	\\
	& = & 0
\end{eqnarray*}
which proves the result.
\end{proof}

\begin{Lemma}\label{LemmaLinearNotTP2Inequality}
Suppose that $0< t_1 < t_2 < s < 1$ such that
\begin{align*}
  \frac{1-s}{s} \frac{t_2}{1-t_2} > \frac{1-t_2}{t_2} \frac{t_1}{1-t_1}
\end{align*}
Then the inequality 
$$h(u_2,v_1) = t_1 < \min\{h(u_1,v_1), h(u_2,v_2)\} \leq \max\{h(u_1,v_1), h(u_2,v_2)\} \leq t_2 < s = h(u_1,v_2)$$
holds for all $u_1,u_2 \in(0,1)$ such that 
$u_1^{\frac{1-s}{s} \frac{t_2}{1-t_2}} < u_2 < u_1^{\frac{1-t_2}{t_2} \frac{t_1}{1-t_1}}$, $v_1 = f_{t_1}(u_2)$ and $v_2 = f_{s}(u_1)$.
\end{Lemma}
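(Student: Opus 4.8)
The plan is to reduce the entire chain of inequalities to elementary comparisons of exponents, using only the two monotonicity properties of $h$ recalled before the lemma --- $u \mapsto h(u,v)$ strictly decreasing and $v \mapsto h(u,v)$ strictly increasing --- together with the identity $h(u,f_t(u)) = t$. The two outer equalities are immediate: since $v_1 = f_{t_1}(u_2)$ and $v_2 = f_s(u_1)$, that identity gives $h(u_2,v_1) = t_1$ and $h(u_1,v_2) = s$, while $t_1 < t_2 < s$ is part of the hypothesis. Hence only the two middle terms $h(u_1,v_1)$ and $h(u_2,v_2)$ need attention, and each must be shown to lie strictly between $t_1$ and $t_2$.

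The one analytic ingredient is obtained by writing the hypothesis $u_1^{\frac{1-s}{s}\frac{t_2}{1-t_2}} < u_2 < u_1^{\frac{1-t_2}{t_2}\frac{t_1}{1-t_1}}$ in logarithmic form; since $u_1 \in (0,1)$ we have $\log u_1 < 0$, so dividing by $\log u_1$ reverses the inequalities and produces
\[
  \frac{1-t_2}{t_2}\,\frac{t_1}{1-t_1}
  \;<\; \frac{\log u_2}{\log u_1}
  \;<\; \frac{1-s}{s}\,\frac{t_2}{1-t_2}.
\]
From this I would extract two auxiliary facts. First, $u_1 < u_2$: indeed the quotient above is smaller than $\tfrac{1-s}{s}\tfrac{t_2}{1-t_2} < 1$, where the last step uses $t_2 < s$. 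Second, $v_1 < v_2$: this is equivalent to $u_2^{(1-t_1)/t_1} < u_1^{(1-s)/s}$, which after taking logarithms and dividing by $\log u_1 < 0$ reduces to $\tfrac{1-t_1}{t_1}\tfrac{\log u_2}{\log u_1} > \tfrac{1-s}{s}$; this holds because the left-hand side exceeds $\tfrac{1-t_2}{t_2}$ by the lower bound above, and $\tfrac{1-t_2}{t_2} > \tfrac{1-s}{s}$ since $t_2 < s$.

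With these auxiliary facts the four remaining inequalities follow quickly. The two lower bounds are pure monotonicity: $h(u_1,v_1) > h(u_2,v_1) = t_1$ because $h$ is strictly decreasing in its first argument and $u_1 < u_2$, and $h(u_2,v_2) > h(u_2,v_1) = t_1$ because $h$ is strictly increasing in its second argument and $v_1 < v_2$. For the two upper bounds I would use $h(u_i,f_{t_2}(u_i)) = t_2$ together with monotonicity in the second variable to reduce $h(u_1,v_1) < t_2$ and $h(u_2,v_2) < t_2$ to $u_2^{(1-t_1)/t_1} < u_1^{(1-t_2)/t_2}$ and $u_1^{(1-s)/s} < u_2^{(1-t_2)/t_2}$, respectively; taking logarithms and dividing by $\log u_1 < 0$ turns these into $\tfrac{1-t_1}{t_1}\tfrac{\log u_2}{\log u_1} > \tfrac{1-t_2}{t_2}$ and $\tfrac{1-t_2}{t_2}\tfrac{\log u_2}{\log u_1} < \tfrac{1-s}{s}$, both of which are immediate consequences of the two-sided estimate. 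I do not expect any real obstacle here; the only point that demands care throughout is that dividing by $\log u_1 < 0$ reverses inequalities, so one must keep precise track of which of the two bounds in the key estimate is being invoked at each step.
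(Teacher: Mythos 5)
Your proposal is correct and follows essentially the same route as the paper's proof: establish $h(u_2,v_1)=t_1$ and $h(u_1,v_2)=s$ directly, derive $u_1<u_2$ and $v_1<v_2$ from the exponent constraints, get the lower bounds from the monotonicity of $h$ in each coordinate, and verify the upper bounds by comparing exponents (your bookkeeping via the ratio $\log u_2/\log u_1$ is just a tidier packaging of the same substitutions the paper performs). All steps, including the sign reversals when dividing by $\log u_1<0$, check out.
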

\begin{proof} 
For simplicity we use the following abbreviation: $h_{ij} := h(u_i,v_j)$. 
By assumption, 
$u_1^{\frac{1-s}{s} \frac{t_2}{1-t_2}} < u_1^{\frac{1-t_2}{t_2} \frac{t_1}{1-t_1}}$ which allows choosing $u_2$ in between. 
Since $\frac{1-s}{s} \frac{t_2}{1-t_2} < 1$ we first have $u_1 < u_2$.
We further show that $v_1 < v_2$ which can be seen as follows: since $\frac{1-s}{s} \frac{t_1}{1-t_1} < \frac{1-t_2}{t_2} \frac{t_1}{1-t_1}$ we obtain
$ u_2 < u_1^{\frac{1-t_2}{t_2} \frac{t_1}{1-t_1}} < u_1^{\frac{1-s}{s} \frac{t_1}{1-t_1}} $
or, equivalently,
$ v_1 
	= f_{t_1}(u_2) 
	= u_2^{\frac{1-t_1}{t_1}} 
	< u_1^{\frac{1-s}{s}} 
	= f_{s}(u_1) 
	= v_2 $.
Direct computation then yields 
$h_{21}= t_1$ and $h_{12} = s$, and monotonicity of $h$ implies
\begin{align*}
  h_{21} 
	  < h_{22} 
	  =  \frac{\log(u_2)}{\log(u_2v_2)} 
	  <  \frac{\log(u_1^{\frac{1-s}{s} \frac{t_2}{1-t_2}})}{\log(u_1^{\frac{1-s}{s} \frac{t_2}{1-t_2}} u_1^{\frac{1-s}{s}})} 
	  =  \frac{\frac{1-s}{s} \frac{t_2}{1-t_2}}{\frac{1-s}{s}(\frac{t_2}{1-t_2} + 1)} 
		= t_2
		< s
\end{align*}
and
\begin{align*}
  h_{21} 
	  <  h_{11} 
	  =  \frac{\log(u_1)}{\log(u_1v_1)} 
		< \frac{\log(u_2^{\frac{1-t_1}{t_1} \frac{t_2}{1-t_2}})}{\log(u_2^{\frac{1-t_1}{t_1} \frac{t_2}{1-t_2}} u_2^{\frac{1-t_1}{t_1}})} 
		= \frac{\frac{1-t_1}{t_1} \frac{t_2}{1-t_2}}{\frac{1-t_1}{t_1} (\frac{t_2}{1-t_2} + 1)} 
    = t_2
		< s
\end{align*}
from which the assertion hence follows.
\end{proof}

We are now in the position to prove Lemma \ref{LemmaFAConstantNotMKTP2}:

\begin{proof}[\textbf{of Lemma \ref{LemmaFAConstantNotMKTP2}}]
For simplicity we use the following abbreviation: $h_{ij} := h(u_i,v_j)$. 
By assumption, we have $F_A(t) > 0$ for all $t \in [t_1, 1]$. 
If $F_A$ has a discontinuity point in $[t_1,1)$ then Lemma \ref{LemmaCase2Discontinuous} implies that the corresponding EVC is not MK-TP2. 
Therefore, it remains to prove the result for $F_A$ continuous on $[t_1,1)$.

The solution of the first-order differential equation $F_A(t) = A(t) + (1-t) D^+A(t) = c$ on $[t_1,t_2]$ is a linear function (see, e.g., \cite{walter2013gewohnliche}),
i.e., there exist $a,b \in \R$ such that $A(t) = at + b$ on $[t_1,t_2]$,
hence $a+b = at + b + (1-t) a = F_A(t) = c \in (0,1)$ for all $t\in[t_1,t_2]$.
%Since $A$ is a Pickands dependence function and $c \in (0,1)$ we obtain $a \in (-1,1)$ and $b \in (0,1)$.
W.l.o.g. set $t_2 := \sup\{ t \in \I \, : \, F_A(t) \leq c\}$. Then $F_A(t) > c = a + b$  for all $t \in (t_2,1]$.

In what follows we choose a rectangle $u_1 \leq u_2$ and $v_1 \leq v_2$ such that the corresponding values fulfill
$h_{21}, h_{11}, h_{22} \in [t_1,t_2]$, $h_{12} > t_2$ 
and hence $F_A(h_{21}) = F_A(h_{11}) = F_A(h_{22}) = c < F_A(h_{12})$ 
(see Figure \ref{FigureAppendix2} for an illustration), which we then use to show that $C$ is not MK-TP2.
%Then we will use linearity of $A$ on $[t_1,t_2]$ %and that we are able to set $h_{21}$ large enough in order 
%to refute the MK-TP2 property, i.e. we will show that
%\begin{align*}
%    1 &> \frac{K_A(u_1,[0,v_1]) K_A(u_2,[0,v_2])}{K_A(u_1,[0,v_2]) K_A(u_2,[0,v_1])} \\&=
%    \frac{C(u_1,v_1) F_A(h_{11}) C(u_2,v_2) F_A(h_{22})}{C(u_1,v_2) F_A(h_{12}) C(u_2,v_1) F_A(h_{21})} \\&= \frac{\left(u_1 v_1\right)^{ah_{11}+b}\left(u_2 v_2\right)^{ah_{22}+b} (a+b)}{\left(u_1 v_2\right)^{A(h_{12})} \left(u_2 v_1\right)^{ah_{21}+b} F_A(h_{12})}
%\end{align*}
%holds. 
\begin{figure}[ht!]
        \begin{center}
            \begin{tikzpicture}[scale=4, domain=0:1]
	%\draw[step=.1cm, gray, very thin] (-2,-2) grid (2,2); 
	% Achsen
	\draw[->,line width=1] (-0.2,0) -- (1.2,0);
	\draw[->,line width=1] (0,-0.2) -- (0,1.2);
	% x-Axis label
	\draw[-,line width=1] (1,0) -- (1,-0.02);
	\node[below=1pt of {(1,-0.02)}, scale= 0.75, outer sep=2pt,fill=white] {$1$};
	% y-Axis label
	\draw[-,line width=1] (0,1) -- (-0.02,1);
	\node[left=1pt of {(-0.02,1)}, scale= 0.75, outer sep=2pt,fill=white] {$1$};
	% Choosen Points
	\draw[-,line width=1] (0.25,0) -- (0.25,-0.02);
	\node[left=1pt of {(0.27,-0.05)}, scale= 0.75, outer sep=2pt] {$t_1$};
	\draw[-,line width=1] (0.75,0) -- (0.75,-0.02);
	\node[right=1pt of {(0.73,-0.05)}, scale= 0.75, outer sep=2pt] {$t_2$};
	\draw[-,line width=1, green] (0.85,0) -- (0.85,-0.02);
	\node[right=1pt of {(0.83,-0.05)}, scale= 0.75, outer sep=2pt] {\textcolor{green}{$h_{21}$}};
	% Pickands dependence function
	\draw[thick, blue, rounded corners=2mm] (0,0)--(0.21,0.6)--(0.79,0.6)--(1,1);
	% Draw brace
	\draw[decorate,decoration={brace,amplitude=10pt, mirror}, xshift=0pt,yshift=0pt]
	(0.25,0) -- (0.75,0) node [black,midway,xshift=-0cm, yshift=-0.5cm] 
	{\tiny{$\in h_{21}, h_{11}, h_{22}$} };
	% Dotted lines for points
	\draw[-,line width=1, dashed] (0.25,0) -- (0.25,0.6); 
	\draw[-,line width=1, dashed] (0.75,0) -- (0.75,0.6); 
	\draw[-,line width=1, green, dashed] (0.85,0) -- (0.85,0.72); 
	\end{tikzpicture}
	\caption{Example of $F_A$ which is constant on $[t_1,t_2]$.}\label{FigureAppendix2}
        \end{center}
\end{figure}
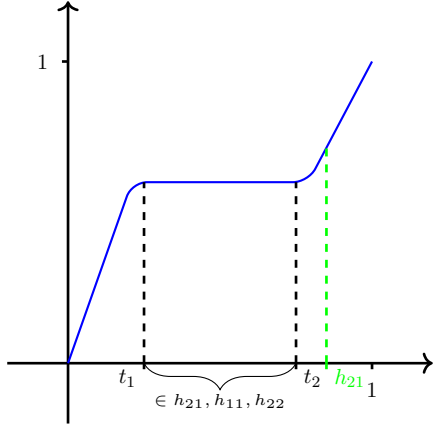
\\
Since $t \mapsto (1-t)/t$ is strictly decreasing and continuous on $(0,1)$,
we have $(1-t_2)/t_2 \cdot t_1/(1-t_1) < 1$ and there exists some $s_1 \in (t_2,1)$ such that 
$$
  \frac{1-s}{s} \frac{t_2}{1-t_2} > \frac{1-t_2}{t_2} \frac{t_1}{1-t_1}
$$
for all $s \in (t_2,s_1]$.
Moreover, since $b \in [0,1]$ and $c\in (0,1)$ we have $a = c-b \in (-1,1)$ such that the Pickands dependence function $A$ does not coincide with the lower bound
$t \mapsto \max \lbrace 1-t,t\rbrace$ on $(t_2,1]$
(otherwise $D^+A$ would have a discontinuity point which then would contradict Lemma \ref{LemmaCase2Discontinuous}).
Therefore we can extend the linear part of $A$ to a certain degree and are still above the lower bound (see Figure \ref{FigureAppendix3}).
Formally this means that there exists some $s_2 \in(t_2,1)$ such that $a t + b > \max \lbrace 1-t,t\rbrace \geq 1/2$ for all $t \in(t_2,s_2]$.
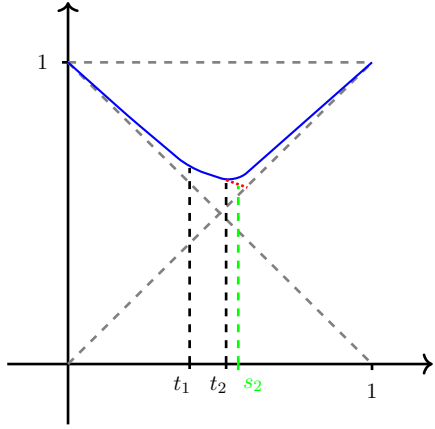
\begin{figure}[ht!]
        \begin{center}
            \begin{tikzpicture}[scale=4, domain=0:1]
	%\draw[step=.1cm, gray, very thin] (-2,-2) grid (2,2); 
	% Achsen
	\draw[->,line width=1] (-0.2,0) -- (1.2,0);
	\draw[->,line width=1] (0,-0.2) -- (0,1.2);
	% x-Axis label
	\draw[-,line width=1] (1,0) -- (1,-0.02);
	\node[below=1pt of {(1,-0.02)}, scale= 0.75, outer sep=2pt,fill=white] {$1$};
	% y-Axis label
	\draw[-,line width=1] (0,1) -- (-0.02,1);
	\node[left=1pt of {(-0.02,1)}, scale= 0.75, outer sep=2pt,fill=white] {$1$};
	% dashed lines
	\draw[-,line width=1, dashed, gray] (0,0) -- (1,1); 
	\draw[-,line width=1, dashed, gray] (0,1) -- (1,0); 
	\draw[-,line width=1, dashed, gray] (0,1) -- (1,1); 
	% Choosen Points
	\draw[-,line width=1] (0.4,0) -- (0.4,-0.02);
	\node[left=1pt of {(0.45,-0.07)}, scale= 0.75, outer sep=2pt,fill=white] {$t_1$};
	\draw[-,line width=1] (0.52,0) -- (0.52,-0.02);
	\node[left=1pt of {(0.57,-0.07)}, scale= 0.75, outer sep=2pt,fill=white] {$t_2$};
	\draw[-,line width=1, green] (0.56,0) -- (0.56,-0.02);
	\node[right=1pt of {(0.53,-0.07)}, scale= 0.75, outer sep=2pt,fill=white] {\textcolor{green}{$s_2$}};
	% Pickands dependence function
	\draw[thick, blue, rounded corners=2mm] (0,1)--(0.2,0.82)--(0.4,0.65)--(0.55,0.6)--(1,1);
	% Dotted lines for points
	\draw[-,line width=1, dashed] (0.4,0) -- (0.4,0.65); 
	\draw[-,line width=1, dashed] (0.52,0) -- (0.52,0.6); 
	\draw[-,line width=1, green, dashed] (0.56,0) -- (0.56,0.59); 
	% Extension of linear Part
	\draw[-,line width=1, red, densely dotted] (0.52,0.61) -- (0.59,0.585); 
	\end{tikzpicture}
	\caption{Example of Pickands dependence function which is linear on $[t_1,t_2]$ with the described extension (red line).}\label{FigureAppendix3}
        \end{center}
\end{figure}
Setting $s := (t_2+  \min\{s_1,s_2\})/2$, we then have
\begin{itemize}
  \item $F_A(s) > a+b$
  \item $ \frac{1-s}{s} \frac{t_2}{1-t_2} > \frac{1-t_2}{t_2} \frac{t_1}{1-t_1}$
  \item $A(s) - (as+b) \leq 1 - \frac{1}{2} = \frac{1}{2}$
\end{itemize}
Now, choose $u_1 \in (0,1)$ large enough such that
\begin{align}\label{LinearNotTP2Assumption}
  a+b < u_1^{\frac{1}{2s}} F_A(s) \leq \big(u_1^{\frac{1}{s}}\big)^{A(s) - (as+b)} F_A(s) 
\end{align}
By choosing $u_2 \in(0,1)$ such that $u_1^{\frac{1-s}{s} \frac{t_2}{1-t_2}} < u_2 < u_1^{\frac{1-t_2}{t_2} \frac{t_1}{1-t_1}}$ and setting $v_1 = f_{t_1}(u_2)$ and $v_2 = f_{s}(u_1)$, 
applying Lemma \ref{LemmaLinearNotTP2Inequality} yields
$$
  h_{21} = t_1 < \min(h_{11},h_{22}) \leq \max(h_{11},h_{22}) \leq t_2 < s = h_{12}
$$
For the Markov kernels, we then obtain
\begin{align}\label{LinearNotTPProofIneq1}
  K_A(u_1,[0,v_1]) \, K_A(u_2,[0,v_2]) 
	& = \frac{C(u_1,v_1)}{u_1}F_A(h_{11}) \frac{C(u_2,v_2)}{u_2}F_A(h_{22}) \notag
	\\
	& = \frac{(a+b)^2}{u_1 u_2}
        \left(u_1 v_1\right)^{ah_{11}+b}\left(u_2 v_2\right)^{ah_{22}+b} 
\end{align}
and
\begin{align}\label{LinearNotTPProofIneq2}
  K_A(u_1,[0,v_2]) \, K_A(u_2,[0,v_1]) 
	& = \frac{C(u_1,v_2)}{u_1}F_A(h_{12}) \frac{C(u_2,v_1)}{u_2}F_A(h_{21}) \notag
	%\\
	%& = \frac{a+b}{u_1 u_2} 
  %       \left(u_1 v_2\right)^{A(h_{12})}F_A(h_{12}) \left(u_2 v_1\right)^{ah_{21}+b}
  %       \notag
	\\
	& = \frac{a+b}{u_1 u_2} F_A(s)
         \left(u_1 v_2\right)^{A(s)} \left(u_2 v_1\right)^{ah_{21}+b}
\end{align}
and Lemma \ref{LemmaLinearNotTP2Equality} yields 
$v_1^{a(h_{11}-h_{21})} u_2^{a(h_{22}-h_{21})} 
 = u_1^{a(h_{12}-h_{11})} v_2^{a(h_{12}-h_{22})}$ which gives
\begin{eqnarray*}
  \frac{K_A(u_1,[0,v_1]) \, K_A(u_2,[0,v_2])}{K_A(u_1,[0,v_2]) \, K_A(u_2,[0,v_1])}
	& = & \frac{(a+b)}{F_A(s)}
				\frac{\left(u_1 v_1\right)^{ah_{11}+b}\left(u_2 v_2\right)^{ah_{22}+b}}
				     {\left(u_1 v_2\right)^{A(s)} \left(u_2 v_1\right)^{ah_{21}+b}} 
	\\
	& = & \frac{(a+b)}{F_A(s)}
				\big(u_1^{ah_{11}+b-A(s)} v_1^{a(h_{11}-h_{21})} u_2^{a(h_{22}-h_{21})} v_2^{ah_{22}+b-A(s)}\big)
	\\
	& = & \frac{(a+b)}{F_A(s)}
				\big(u_1^{ah_{11}+b-A(s)} u_1^{a(h_{12}-h_{11})} v_2^{a(h_{12}-h_{22})} v_2^{ah_{22}+b-A(s)}\big)
	\\
	& = & \frac{(a+b)}{F_A(s)} \, (u_1v_2)^{ah_{12}+b-A(s)}
	\\
	& = & \frac{(a+b)}{F_A(s)} \, \big(u_1^{\frac{1}{s}}\big)^{as+b-A(s)} < 1
\end{eqnarray*}
Therefore, the EVC is not MK-TP2.
\end{proof}

\end{document}